\definecolor{darkgreen}{cmyk}{1,0,1,.2}
\definecolor{m}{rgb}{1,0.1,1}
\definecolor{green}{cmyk}{1,0,1,0}
\definecolor{darkred}{rgb}{0.55, 0.0, 0.0}
\definecolor{test}{rgb}{1,0,0}
\definecolor{cmyk}{cmyk}{0,1,1,0}
\definecolor{orange}{rgb}{1,0.5,0}
\renewcommand\sout{\bgroup\markoverwith
{{\rule[0.7ex]{3pt}{1.4pt}}}\ULon}
\def\Av{\operatorname{Av}}
\def\End{\operatorname{End}}
\def\END{\operatorname{END}}
\def\ind{\operatorname{ind}}
\def\End{\operatorname{End}}
\def\id{\operatorname{id}}
\def\Id{\operatorname{Id}}
\def\Hom{\operatorname{Hom}}
\def\Ind{\operatorname{Ind}}
\def\Supp{\operatorname{Supp}}
\def\SU{\operatorname{SU}}
\def\C{\mathbb C}
\def\N{\mathbb N}
\def\R{\mathbb R}
\def\Z{\mathbb Z}
\def\T{\mathbb T}
\theoremstyle {definition} \newtheorem {definition} {Definition} [section] }
\theoremstyle {definition} \newtheorem {defi} {Definition} [section] }
\theoremstyle {plain}  \newtheorem {thm} [defi] {Theorem}}
\theoremstyle {plain}  \newtheorem {cor} [defi]{Corollary}}
\theoremstyle {plain} \newtheorem {prop} [defi]{Proposition}}
\theoremstyle {plain} \newtheorem {lem}[defi] {Lemma}}
\theoremstyle {definition} }
\theoremstyle {definition} \newtheorem{remarque}[defi]{Remark}}
\theoremstyle {definition} }
\theoremstyle {definition} }
\theoremstyle {definition}  }
\theoremstyle {definition} }
\theoremstyle {plain}  }
\def\Hom{{\mathrm{Hom}}}
\def\End{{\mathrm{End}}}
\def\g{{\mathfrak{g}}}
\def\K{{\mathrm{K_G}}}
\def\k{{\mathrm{K}}}
\def\ind{{\mathrm{Ind^{\mathcal{F}}}}}
\def\ext{{\mathrm{ext}}}
\def\interieur{{\mathrm{int}}}
\newcommand{\email}[1]{\href{mailto:#1}{#1}}
\newcommand\maA{\mathcal A}
\newcommand\maN{\mathcal N}
\newcommand\maE{\mathcal E}
\newcommand\maF{\mathcal F}
\newcommand\maG{\mathcal G}
\newcommand\maK{\mathcal K}
\newcommand\maL{\mathcal L}
\newcommand\maV{\mathcal V}
\newcommand\maU{\mathcal U}
\newcommand{\Spin}{\operatorname {Spin}}
\newcommand{\SO}{\operatorname {SO}}
\newcommand\ep{\epsilon}
\title{The index of leafwise $G$-transversally \\elliptic operators on foliations\\{\em{\small{Dedicated  to Varghese Mathai  on the occasion of his sixtieth birthday}}}}
\author[1]{Alexandre Baldare}
\affil[1]{Institut für Analysis, Welfengarten 1, 30167 Hannover,
Germany, \email{alexandre.baldare@math.uni-hannover.de}}
\author[2]{Moulay-Tahar Benameur}
\affil[2]{IMAG, UMR 5149 du CNRS, Universit\'e de Montpellier, France,
\email{moulay.benameur@umontpellier.fr}}
\date{}
\begin{document}

\maketitle

%
%
%
%
%

\begin{abstract}
We introduce and study the index morphism for  leafwise $G$-transversally elliptic operators on smooth closed foliated manifolds. We prove the usual axioms of excision, multiplicativity and induction for closed subgroups.  In the case of free actions, we relate our index class with the Connes-Skandalis index class of the corresponding leafwise elliptic operator on the quotient foliation. Finally we prove the compatibility of our index morphism with the Gysin morphism and reduce its computation to the case of tori actions. We also construct a topological candidate for an index theorem using the Kasparov Dirac element for euclidean $G$-representations.\\
\medskip\\
{\bf{Keywords:}}
{\em{Transversally elliptic, foliation, Fredholm index, $\k\k$-theory, $\k$-homology, group action.}}
\footnote{  \textbf{MSC2010 classification:} 	
19K35, 
19K56, 
19L47, 
58B34.}
\end{abstract}



\tableofcontents

\bigskip


\section*{Introduction}

The present paper is devoted to the index theory for leafwise pseudodifferential operators on smooth foliations, which are $G$-invariant and leafwise $G$-transversally elliptic, for a given leafwise action of a compact Lie group $G$.  Since $G$-invariant elliptic operators are $G$-transversally elliptic, our results encompass the equivariant Connes-Skandalis index theory for leafwise elliptic operators \cite{ConnesSkandalis, benameur1994theoreme} as well as the  classical  index theory for $G$-transversally elliptic operators \cite{atiyah1974elliptic, BV:IndEquiTransversal} with its fibered version obtained in \cite{baldare:KK, baldare:H}. Due to the lack of full ellipticity, the kernel of such operators is infinite dimensional in general. However, the  $G$-invariance of the operator along the orbits together with its ellipticity in the directions transverse to these orbits ensures that this kernel  contains  irreducible representations only with finite multiplicities. 

In \cite{atiyah1974elliptic}, Atiyah showed  that the index of a $G$-transversally elliptic operator is well defined  as a central distribution  which actually lives  in the $-\dim (G)$ Sobolev space of $G$.  He also proved a list of important axioms and reduced  the  computation of the distributional index to the case of linear actions of tori. An important observation is that even in the elliptic case, by embedding the operators in the wider $G$-transversally elliptic class, one can benefit from their better functoriality properties and it is often easier  to deform  when only $G$-transversal ellipticity is prescribed. This idea  proved fruitful in many situations explained below, in particular in some recent approaches to the differential $[Q, R]=0$ problem, see for instance \cite{ParadanVergne17}. In the case of locally free actions of tori, Atiyah succeded proving a signature formula for the singular quotient, which was the starting point for the famous Kawasaki work on orbifolds \cite{kawasaki1981index}. Later on, N. Berline and M. Vergne proved a delocalized cohomological formula for $G$-invariant $G$-transversally elliptic operators, see \cite{BV:formuleloc:Kirillov, BV:ChernCharactertransversally,BV:IndEquiTransversal}. This formula computes the  Atiyah index distribution around a given  $s\in G$ as an integral of equivariant characteristic classes, by using the Kirillov localization principle.  It is worthpointing out the close relation of these delocalized index formulae with the Duistermaat-Heckman theorem for tori actions on compact symplectic manifolds \cite{DuistermaatHeckman} which actually motivated these index formulae, see also \cite{AtiyahBottMoment}. More recently in \cite{Kasparov:KKindex}, Kasparov applied  the classical ''Bott $\leftrightarrow$ Dirac'' proof of the Atiyah-Singer theorem for elliptic operators,  to investigate the index problem now for  $G$-transversally elliptic operators. His approach is new and  computes the $\k$-homology index class  \cite{julg1982induction} in terms of the stable homotopy class of the principal symbol. Kasparov actually considered the more general setting of locally compact Lie groups acting properly and cocompactly on smooth manifolds and succeeded in this wide generality to compute the  index class as a cup product of the symbol class by a fundamental Dirac element. This is an important progress and it increased interest in $G$-transversally elliptic operators. Previous results were also obtained for bounded geometry manifolds in \cite{kordyukov1994transv, kordyukov1995transv}.  Here are some well known constructions  where $G$-transversally elliptic operators  play a significant part:
\begin{itemize}
\item the Mathai-Melrose-Singer fractional index \cite{MMS1,MMS2} is the evaluation of the distributional index of a $G$-transversally elliptic operator on a specific  test function localizing at the neutral element, here $G=\SU (N)$, see as well \cite{paradan:projective};
\item projective elliptic genera as introduced by Han-Mathai \cite{HanMathai} turn out to also be related with a $G$-transversally elliptic operator, here $G=\Spin (N)$;   
\item the basic index problem for riemannian foliations \cite{Kacimi}, see the recent developements \cite{GorokhovskyLott, BrueningKamberRichardson}, reduces via Molino's theory to the computation of the index of a $G$-transversally elliptic operator. Here $G=\SO(N)$;
\item For locally free $G$-actions, one recovers a first affordable index problem for transversally elliptic operators on foliations \cite{ConnesMoscovici}. Then our more general setting gives a first example towards the harder index problem for, $F_1$-leafwise elliptic and $F_2$-transversally elliptic operators, as studied in \cite{hilsum1987morphismes} for a bifoliation $F_2\subset F_1$. 
\item as explained above, the Kawasaki index formula for orbifolds \cite{kawasaki1981index} can be recovered as  a corollary  of the Berline-Vergne cohomological formula for $G$-transversally elliptic operators, see  \cite{VergneOrbifold};
\item the Guillemin-Sternberg principle \cite{MeinrenkenSjamaar} and more specifically its spin$^c$ version as studied in \cite{ParadanVergne17} relies on ideas from the index theory of $G$-transversally elliptic operators. See also the  interesting extension to proper actions by Hochs-Mathai in \cite{HochsMathai};
\item etc...
\end{itemize}

The first observation for the foliation case treated in the present paper is that, using a holonomy invariant transverse measure, leafwise $G$-transversally elliptic operators do have well defined measured  distributional indices, similar to the Atiyah distribution, and defined using Connes' machinary \cite{Connes:integration:non:commutative} and the Murray-von Neumann dimension theory. 
Exactly as $G$-transversally elliptic operators on closed manifolds provide type I spectral triples,  the holonomy invariant measure allows here to see any  leafwise $G$-transversally elliptic operator as a semi-finite spectral triple  on the convolution algebra $C^\infty (G)$, in the sense of \cite{BenameurFack}.  Now when such holonomy invariant measure does not exist, one is naturally led to the construction of an index theory taking place in appropriate bivariant $\k$-theory groups. More precisely, given a smooth foliation $\maF$ of a closed riemannian manifold $M$ together with a smooth isometric action of a compact Lie group $G$ by leaf-preserving diffeomorphisms, any  leafwise pseudodifferential operator $P$ which is $G$-invariant and leafwise $G$-transversally elliptic is shown to have an index class living in the bivariant group $\k\k (C^*G, C^*(M, \maF))$, where $C^*(M, \maF)$ is Connes' $C^*$-algebra. Therefore, when evaluated at  irreducible representations of $G$, the index of $P$ embodies  a $\k$-multiplicity map 
$$
m_P:{\widehat G}\longrightarrow \k (C^*(M, \maF)).
$$
When $\maF$ is top dimensional,  this is the usual integer valued multiplicity map \cite{atiyah1974elliptic}. In fact and as expected, if we denote by $F_G$ the space of leafwise tangent vectors which are orthogonal to the orbits of the $G$-action, then  our index construction induces the index morphism
$$
\Ind^\maF : \K(F_G) \longrightarrow \k\k (C^*G, C^*(M, \maF)).
$$
We prove here  a list of  axioms which reduce, as in the classical case, the computation of the index class to the simplest case of linear tori actions. \\

Let us explain  more in detail some results. 
Due to the high complexity of the transverse geometry of foliations, the expected axioms are surprisingly hard  to formulate and to prove, without exploiting Kasparov's theory and the powerful tool of the cup product. First notice that when the $G$-invariant operator is leafwise elliptic, its symbol still defines a class in $\K (F_G)$ which is the restriction of the usual class in $\K (F)$, and its index as a $G$-transversally elliptic operator can then be deduced from the classical  elliptic index class in $\k\K (\C, C^*(M, \maF))$ using composition with the trivial representation of $G$. 
Moreover, assume that $(M, \maF) \to (B, \maF^B)$ is a principal $G$-equivariant bundle so that $G$ preserves the leaves of $\maF$ and induces the foliation $\maF^B$ downstairs in $B$, and that $P$ is a $G$-invariant leafwise $G$-transversally elliptic operator on $(M, \maF)$ which corresponds through its symbol to a leafwise elliptic operator $P_0$ on $(B ,\maF^B)$ then we show that the Connes-Skandalis leafwise index of $P_0$ can be recovered from the index class of $P$ upstairs by evaluation at the trivial representation, modulo  composition with a standard Morita extension morphism. These results explain the compatibility of the leafwise $G$-transversally elliptic theory with the elliptic one. We prove here many axioms for our index morphism, such as 
multiplicativity and excision, see Theorems \ref{thm:excision} and \ref{thm:multiplicativité:indice}. Let us state now the compatibility of our index map with the Gysin morphisms of foliations.

\medskip

\begin{thm}
{Let $(M', \maF')$ be a smooth $G$-foliation. Let $\iota: M \hookrightarrow M'$ be a $G$-equivariant embedding of a closed $G$-manifold $M$ which is transverse to the foliation $\maF'$ and denote by $\maF=\iota^*\maF'$  the pull back foliation.}  Then for any $j\in \Z_2$, the following diagram commutes:
$$
\xymatrix{\k^j_\mathrm{G}(F_G) \ar[r]^{\iota_!} \ar[d]_{\Ind^\maF}& \k^j_\mathrm{G}(F'_G) \ar[d]^{\mathrm{Ind}^{\mathcal{F}'}} \\
\k\k^j(C^*G ,C^*(M,\mathcal{F}))\; \; \ar[r]_{\underset{C^*(M, \maF)}{\otimes}\epsilon_\iota}&\; \;    \k\k^j(C^*G ,C^*(M',\mathcal{F}')).}$$
\end{thm}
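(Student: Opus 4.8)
The plan is to factorize both horizontal arrows of the diagram through a $G$-equivariant foliated tubular neighbourhood of $M$ in $M'$, and then to match the two factorizations term by term. First I would use the hypothesis $\iota^*(TM'/T\maF')\simeq TM/T\maF$, which forces the normal bundle $\nu$ of $M$ in $M'$ to sit inside $T\maF'|_M$ as a $G$-equivariant subbundle; the $G$-equivariant tubular neighbourhood theorem then provides a $G$-invariant open set $\maU\subset M'$ containing $M$ together with a $G$-diffeomorphism of $\maU$ with the total space of $\nu$, under which $\maF'|_\maU$ is the foliation whose leaves are the total spaces $\nu|_L$ of the restrictions of $\nu$ to the leaves $L$ of $\maF$, the group $G$ acting fibrewise linearly. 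Writing $p\colon\maU\to M$ for the bundle projection, the transverse symbol space $F'_G$ restricts over $\maU$ to $p^*\bigl(F_G\oplus(\nu\otimes_\R\C)\bigr)$, so that $F_G$ sits inside $F'_G|_\maU$ with complex normal bundle $p^*(\nu\otimes_\R\C)$; consequently $\iota_!$ is, by construction, the composite of the Thom isomorphism $\phi_\nu\colon\k^j_\mathrm{G}(F_G)\xrightarrow{\ \sim\ }\k^j_\mathrm{G}(F'_G|_\maU)$ with the open-extension homomorphism $\k^j_\mathrm{G}(F'_G|_\maU)\to\k^j_\mathrm{G}(F'_G)$. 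On the analytic side I would recall that $\epsilon_\iota$ factorizes as the Kasparov product
$$
\epsilon_\iota\;=\;\mu_\nu\underset{C^*(\maU,\maF'|_\maU)}{\otimes}e_\maU,
$$
where $\mu_\nu\in\k\k\bigl(C^*(M,\maF),C^*(\maU,\maF'|_\maU)\bigr)$ is the Morita equivalence reflecting that the leaves of $\maF'|_\maU$ are vector bundles over the leaves of $\maF$, and $e_\maU\in\k\k\bigl(C^*(\maU,\maF'|_\maU),C^*(M',\maF')\bigr)$ is the morphism attached to the open subgroupoid $\maG(\maF')|_\maU\subset\maG(\maF')$.

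Next I would split the main diagram into the two squares obtained by inserting $\k\k^j\bigl(C^*G,C^*(\maU,\maF'|_\maU)\bigr)$ in the middle. The right-hand square, with the open-extension map on top and $\otimes e_\maU$ below, I would deduce from the excision axiom: a $G$-transversally elliptic symbol supported in $\maU$ is the same data whether viewed on $(\maU,\maF'|_\maU)$ or on $(M',\maF')$, and its two indices correspond under $e_\maU$. The left-hand square, with $\phi_\nu$ on top and $\otimes\mu_\nu$ below, I would deduce from the multiplicativity axiom (Theorem \ref{thm:multiplicativité:indice}): the Thom isomorphism $\phi_\nu$ is realized by the sharp-product with the Bott symbol of the $G$-invariant leafwise $G$-transversally elliptic Dolbeault-type operator along the normal directions $\nu$, whose index I would identify with $\mu_\nu$; multiplicativity then turns the index of the sharp-product into the Kasparov product of the two indices. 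Pasting the two squares and using associativity of the Kasparov product, one gets for every $\sigma\in\k^j_\mathrm{G}(F_G)$
$$
\Ind^{\maF'}(\iota_!\sigma)\;=\;\bigl(\Ind^{\maF}(\sigma)\underset{C^*(M,\maF)}{\otimes}\mu_\nu\bigr)\underset{C^*(\maU,\maF'|_\maU)}{\otimes}e_\maU\;=\;\Ind^{\maF}(\sigma)\underset{C^*(M,\maF)}{\otimes}\epsilon_\iota,
$$
which is the asserted commutativity; the $\Z_2$-grading is preserved throughout because the relevant normal bundle $\nu\otimes_\R\C$ is complex, so that $\phi_\nu$, $\mu_\nu$, and hence $\iota_!$ and $\epsilon_\iota$, are degree $0$.

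The hard part will be the first step: making the $G$-equivariant foliated tubular neighbourhood identification completely rigorous — in particular checking that $\maF'|_\maU$ is precisely the leafwise thickening of $\maF$ along $\nu$ and that the induced $G$-action is the linearized one — and, above all, proving that the topological Thom isomorphism on the transverse symbol spaces is realized, after applying $\Ind$, by the Kasparov product with the analytically defined Morita element $\mu_\nu$. The latter reduces to computing the index of the fibrewise Dolbeault operator along $\nu$ and recognising it as the Morita equivalence $C^*(M,\maF)\sim C^*(\maU,\maF'|_\maU)$, which is the point where Theorem \ref{thm:multiplicativité:indice} actually gets used. Once these two identifications are secured, the rest is a formal diagram chase built on excision, multiplicativity and the associativity of the Kasparov product.
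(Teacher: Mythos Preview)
Your overall architecture is correct and matches the paper exactly: factor through a $G$-invariant tubular neighbourhood $\maU\simeq N$ of $M$ in $M'$, use excision (Theorem~\ref{thm:excision}) for the open inclusion $\maU\hookrightarrow M'$, and reduce the problem to showing that the Thom map $\k^j_\mathrm{G}(F_G)\to\k^j_\mathrm{G}(F^N_G)$ corresponds under the index morphism to the Morita class $\epsilon_\zeta$ of the zero section $\zeta\colon M\hookrightarrow N$.

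The gap is in your treatment of this last step. You propose to realise the Thom isomorphism as a sharp product with a fibrewise Dolbeault symbol along $\nu$ and then invoke the multiplicativity axiom. But Theorem~\ref{thm:multiplicativité:indice} is stated and proved only for \emph{cartesian products} $(M\times M',\maF\times\maF')$: the sharp product \eqref{product} takes a symbol on $(M,\maF)$ and a symbol on a \emph{separate} foliated manifold $(M',\maF')$, not a family of symbols varying over $M$. The Thom class on a nontrivial bundle $N\to M$ is not of this form, so multiplicativity as proved does not apply directly, and there is no ``index of the fibrewise Dolbeault operator'' living in $\k\k(C^*(M,\maF),C^*(\maU,\maF'|_\maU))$ that the axiom hands you.

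The paper's fix is the classical frame-bundle trick from \cite{atiyah1974elliptic}. Write $N=P\times_{O(n)}\R^n$ with $q_1\colon P\to M$ the $G$-equivariant principal $O(n)$-bundle of orthonormal frames, and $q_2\colon P\times\R^n\to N$. The pullbacks $q_1^*,q_2^*$ identify the Thom map with multiplication by the Bott class $i_!(1)\in\k_{\mathrm{G}\times O(n)}(T\R^n)$ in the product \eqref{productRnO(n)}, which \emph{is} a genuine cartesian product $(P,\maF^P)\times(\R^n,\R^n)$. Now multiplicativity applies legitimately (with the extra $O(n)$-equivariance), and since $\Ind(i_!(1))=1$ one gets
\[
\Ind^{\maF^{P\times\R^n}}(q_2^*\zeta_! a)\;=\;\Ind^{\maF^P}(q_1^*a)\otimes\mu(\R^n).
\]
The free-action axiom (Theorem~\ref{thm:action:libre}) then lets you descend both sides by $\chi_1^{O(n)}$ and the Morita classes $\maE_{q_1},\maE_{q_2}$, and the relation $\zeta\circ q_1=q_2\circ s_0$ (with $s_0$ the zero section of the trivial bundle) gives $\maE_{q_1}\otimes\epsilon_\zeta=\mu(\R^n)\otimes\maE_{q_2}$, yielding $\Ind^{\maF^N}(\zeta_! a)=\Ind^\maF(a)\otimes\epsilon_\zeta$.

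So the missing idea is precisely this detour through the frame bundle, which untwists $N$ into a product at the cost of an auxiliary free $O(n)$-action, allowing both axioms to be applied in the form in which they were proved.
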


\medskip

The class $\epsilon_\iota$ is a quasi-trivial Morita extension, see Section \ref{Naturality}. Another important feature of the index morphism   is the generalized reciprocity formula for closed subgroups as well as its good behaviour with respect to the restriction to a maximal torus. Assuming $G$ connected with a maximal torus $\T$, we obtain for instance the following theorem which allows to reduce the computation of the index morphism to the case of tori actions. 

\medskip

\begin{thm}
Denote by 
$r^G_\T: \k_\mathrm{G}^j(F_G)\rightarrow \k^j_{\T}(F_\T)$ the map defined in Section \ref{NaturalityInduction} using the Dolbeault operator associated with the complex {G}-structure on $G/\T$. Then for $j\in \Z_2$ the following diagram commutes: 
$$\xymatrix{\k_\mathrm{G}^j(F_G) \ar[r]^{r^G_\T} \ar[d]_{\mathrm{Ind}^{\mathcal{F}}}& \k^j_\T (F_\T)\ar[d]^{\mathrm{Ind}^{\mathcal{F}}}\\
\;\;\;\; \k\k^j(C^*G,C^*(M,\mathcal{F})) \;\;&\;\;\; \k\k^j(C^*\T,C^*(M,\mathcal{F}))\;\; \;\;  \ar[l]^{[i] \underset{C^*\T}{\otimes} \bullet }
}$$
where $[i]\in \k\k (C^*G, C^*\T)$ is the induction class.
\end{thm}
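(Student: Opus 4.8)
The plan is to rephrase the commutativity of the square as the single $\k\k$-identity
$$
\Ind^\maF_G(\sigma)\;=\;[i]\ \underset{C^*\T}{\otimes}\ \Ind^\maF_\T\bigl(r^G_\T\sigma\bigr),\qquad \sigma\in\k_G^j(F_G),
$$
and to prove it by combining the multiplicativity axiom (Theorem~\ref{thm:multiplicativité:indice}) with the reciprocity formula for the closed subgroup $\T\subseteq G$, the bridge between the two being the Dolbeault operator on $G/\T$ that enters the very definition of $r^G_\T$. Recall from Section~\ref{NaturalityInduction} that $r^G_\T$ is \emph{not} the naive change of group: it is the composite of the forgetful map $\k_G^j(F_G)\to\k_\T^j(F_G)$, which restricts the action to $\T$ (a leafwise $G$-transversally elliptic symbol being a fortiori leafwise $\T$-transversally elliptic), with the Thom-type isomorphism $\k_\T^j(F_G)\xrightarrow{\ \sim\ }\k_\T^j(F_\T)$ attached to the leafwise $\T$-equivariant complex bundle $F_\T/F_G$, whose fibre over a point of an orbit $Gx$ is modelled on the isotropy representation on $\mathfrak{g}/\mathfrak{t}\cong T_{e\T}(G/\T)$ and whose Thom class is exactly the principal symbol of the fibrewise Dolbeault operator $\bar\partial_{G/\T}$.

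First I would show that, under this Thom isomorphism, $\Ind^\maF_\T$ is natural: by Theorem~\ref{thm:multiplicativité:indice} applied to the sharp product of $\sigma$ (read $\T$-equivariantly) with $\bar\partial_{G/\T}$, the class $\Ind^\maF_\T(r^G_\T\sigma)$ is the Kasparov product of the $\T$-equivariant index class of $\bar\partial_{G/\T}$ with the $\T$-index datum carried by $\sigma$. Next, the reciprocity formula for $\T\hookrightarrow G$ expresses $\Ind^\maF_G(\sigma)$ as $[i]\otimes_{C^*\T}(-)$ applied to precisely that $\T$-index datum. Finally, by the Borel--Weil--Bott description of the Dolbeault cohomology of $G/\T$ — this is the point at which the connectedness of $G$ is used — the $\T$-equivariant index class of $\bar\partial_{G/\T}$ is the element that intertwines the product of the first step with the argument of $[i]\otimes_{C^*\T}(-)$ in the second; equivalently, it realises at the level of $\k\k(C^*G,C^*\T)$ the classical reduction to a maximal torus of \cite{atiyah1974elliptic} and its $\k$-homological counterpart in \cite{Kasparov:KKindex}. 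Assembling the three steps and using associativity of $\underset{C^*\T}{\otimes}$, both composites in the diagram collapse to $\Ind^\maF_G(\sigma)$. Throughout, $C^*(M,\maF)$ is an inert coefficient algebra — the residual $G$-action on it is forgotten in the target — so the only input not already available from the earlier axioms is the last identification, which is just the foliated (coefficient) version of the classical statement.

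The main obstacle is the first step. Since $\bar\partial_{G/\T}$ is genuinely elliptic while $\sigma$ is only leafwise $G$-transversally elliptic, one must form their sharp product on $F_\T$, check that it is again leafwise $\T$-transversally elliptic and that it represents $r^G_\T\sigma$, and then verify that Theorem~\ref{thm:multiplicativité:indice} does apply to such a mixed elliptic/transversally-elliptic product — which requires controlling the triple Kasparov product $C^*G\rightsquigarrow C^*\T\rightsquigarrow C^*(M,\maF)$, with the non-unital algebra $C^*G$ in the leftmost slot, and this is exactly where the full strength of the associativity of the Kasparov product is needed. A further subtlety, which already makes the construction of $r^G_\T$ in Section~\ref{NaturalityInduction} delicate, is that the rank of $F_\T/F_G$ jumps along the orbit-type stratification of $M$, so the Thom isomorphism and its identification with $\bar\partial_{G/\T}$ have to be performed compatibly across strata; this is where the holonomy-action hypothesis on the $G$-action is used. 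The remaining work — matching the complex structures and orientations on $F_\T/F_G$ and on $G/\T$, and bookkeeping the Morita equivalences involved (including the quasi-trivial extension $\epsilon_\iota$ of Section~\ref{Naturality}, should the argument be routed through an auxiliary $G$-equivariant embedding) — is routine but lengthy, as none of these choices affects the resulting $\k\k$-class.
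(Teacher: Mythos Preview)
Your overall strategy---multiplicativity with the Dolbeault symbol plus the induction/reciprocity formula, glued by $\Ind_G(\bar\partial)=1$---is exactly the right one, and it is the paper's strategy as well. The gap is that you have misread the definition of $r^G_\T$, and this forces you into technical detours that the actual proof avoids entirely.

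In the paper, $r^G_\T$ is \emph{not} the composite ``restrict to $\T$, then apply a Thom isomorphism along the bundle $F_\T/F_G$ on $M$.'' It is $r^G_\T:=(i_*)^{-1}\circ\beta$, where both maps land in $\k_G^j(F^Y_G)$ for the \emph{auxiliary} foliation $Y=G\times_\T M\cong G/\T\times M$: the map $\beta$ is the external product with $[\sigma(\bar\partial)]\in\k_G(T(G/\T))$ in the multiplicative pairing
\[
\k_G^j(F_G)\otimes\k_G(T(G/\T))\longrightarrow\k_G^j(F^Y_G),
\]
and $i_*:\k_\T^j(F_\T)\xrightarrow{\sim}\k_G^j(F^Y_G)$ is the free-action isomorphism from the induction paragraph. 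The Dolbeault operator lives on the separate factor $G/\T$, not fibrewise over $M$; there is no ``bundle $F_\T/F_G$'' (these are not vector bundles, as you yourself note when you worry about jumping ranks), and no Thom map across orbit-type strata is needed.

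With the correct definition the proof is two lines. Apply Theorem~\ref{thm:multiplicativité:indice} with $H$ trivial, taking $(M,\maF)=G/\T$ (one leaf) and $(M',\maF')$ equal to the given foliation: since the Dolbeault operator is $G$-elliptic with $\Ind_G(\bar\partial)=1\in R(G)$, its descent image is the unit of $\k\k(C^*G,C^*G)$, and multiplicativity gives $\Ind^{\maF^Y}\circ\beta=\Ind^\maF$. Then Theorem~\ref{thm:induction:1}, already established, gives $\Ind^{\maF^Y}\circ i_*=[i]\underset{C^*\T}{\otimes}\Ind^\maF(\,\cdot\,)\underset{C^*(M,\maF)}{\otimes}\epsilon$; the Morita class $\epsilon$ is absorbed because $C^*(Y,\maF^Y)\cong C^*(M,\maF)\otimes\maK(L^2(G/\T))$. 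Combining, $\Ind^\maF(a)=[i]\underset{C^*\T}{\otimes}\Ind^\maF(r^G_\T a)$.

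So your ``main obstacle'' (mixed elliptic/transversally-elliptic sharp products on $F_\T$, stratification of $F_\T/F_G$, holonomy hypothesis for compatibility across strata) is an artefact of the wrong decomposition: routing through the product manifold $G/\T\times M$ makes the Dolbeault factor genuinely elliptic on an honest factor, and the multiplicativity axiom applies verbatim with no further work.
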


\medskip

In the last section, we  provide a topological candidate for an index theorem in our setting.   Given a $G$-embedding of $M$ in a euclidean $G$-representation $E$, we show that there exists a topological transversal $\mathcal{N}_G$ for the foliated space $(M\times T_G(E), \maF \times 0)$ together  a $\k$-oriented $G$-map $\iota: F_G\hookrightarrow \maN_G$. Hence we use the  Gysin morphism 
$$
\iota_! : \k^j_\mathrm{G }(F_G) \longrightarrow \k^j_\mathrm{G} (\mathcal{N}_G).
$$
composed  with the Morita extension morphism $\K (\mathcal{N}_G)\to  \k_j^\mathrm{G} (C^*(M\times T_G(E), \maF))$ to define the $R(G)$-morphism
$$
\k^j_\mathrm{G} (F_G) \longrightarrow  \k_j^\mathrm{G} \left( C_0(T_G(E))\otimes C^*(M, \maF)\right).
$$
Now, the topological index morphism is obtained by composition of this morphism with the Dirac morphism defined in \cite{Kasparov:KKindex} on $E$, this latter step is a replacement for the Bott periodicity in the elliptic case. 
Even in the case of closed fibrations, this topological construction is new and completes the bivariant approach to the families Atiyah problem that was investigated in \cite{baldare:KK}. \\

In order to keep this paper in a reasonnable size, we have chosen to restrict ourselves to {\em{compact}} Lie group actions although the interested reader can easily check that most of the constructions are immediately extendable to the setting of proper cocompact   actions, as carried out in  \cite{Kasparov:KKindex} for the top-dimensional case. Also, the higher distributional approach will be dealt with in a forthcoming paper where we also develop the cohomological viewpoint in the spirit of \cite{BV:ChernCharactertransversally, BV:IndEquiTransversal}, using Haefliger cohomology and results from \cite{BenameurHeitschI,BenameurHeitschICorrigendum, BenameurHeitschJDG}. We point out that these latter cohomological results have already been carried out by the first author in the case of closed fibrations in \cite{baldare:H}, where the family Berline-Vergne formula was obtained.  \\

\medskip

{\em{Acknowledgements.}} The authors  wish to thank A. Carey, P. Carrillo-Rouse, T. Fack, J. Heitsch, M. Hilsum, P. Hochs, Y. Kordyukov,  V. Mathai, H. Oyono-Oyono, V. Nistor, S. Paycha, M. Puschnigg,  A. Rennie  and G. Skandalis 
for many helpful discussions. 
Part of this work was realized during the postdoctoral position of the first author in the {\em{Institut Elie Cartan de Lorraine}} at Metz, he is indebted to the members of the noncommutative geometry team for the warm hospitality. The second author would like to thank his colleagues in Montpellier, and more specifically  P.-E. Paradan, for several interesting conversations around transversally elliptic operators and their  applications. Both authors thank the French National Research Agency for the financial support via the ANR-14-CE25-0012-01 (SINGSTAR).

\bigskip

\section{Some preliminaries about actions on foliations}

{We gather in this first section many classical results about group actions on foliations that will be used in the sequel. Most of them are well known to experts.}

\subsection{{Holonomy actions and Hilbert $G$-modules}}\label{C(B):module}
This first paragraph  is devoted to a brief review of some standard results.
For most of the classical properties of  Hilbert $C^*$-modules and regular operators between them, we refer the reader to  \cite{lance1995hilbert} and \cite{skandalis:cours:M2}.
The constructions given below extend the standard ones, see for instance  \cite{baldare:KK,baldare:H,fox1994index,hilsum2010bordism,julg1988indice}.
Our hermitian scalar products will always be linear in the second variable and anti-linear in the first.
Let $G$ be a compact group with a fixed bi-invariant Haar measure $dg$. The   convolution $*$-algebra $L^1(G)$ is defined as usual with respect to the rules
$$
(\varphi \psi) (g):=\int_{g_1\in G} \varphi (g_1) \psi (g_1^{-1}g) dg_1\text{ and } \varphi^* (g) := {\overline{\varphi (g^{-1})}}.
$$
We   denote by $C^*G$ the $C^*$-algebra associated with $G$, which is the operator-norm closure of the range of $L^1(G)$ in the  bounded operators on $L^2(G)$. A classical construction shows that any finite-dimensional unitary representation of  $G$ naturally identifies with a finitely generated projective module on  $C^*G$ \cite{julg1981produit:croise}. There is a well defined  action of $G$ by automorphisms of the $C^*$-algebra $C^*G$ which is induced by the adjoint action on $C(G)$ given by
$$
(Ad_g \varphi ) (k) := \varphi (g^{-1} k g),\quad \text{ for } \varphi \in C(G)\text{ and } g, k\in G.
$$

Let now $M$ be a smooth compact manifold  and let $\mathcal{F}$ be a given smooth  foliation of $M$. We assume that $G$ acts smoothly on $M$ by leaf-preserving diffeomorphisms, so any element $g\in G$ preserves  each leaf of $(M, \maF)$. We denote by $F$ the subbundle of $TM$ composed of all the vectors tangent to the leaves of $\mathcal{F}$, this is the tangent bundle of our foliation and its dual bundle is the cotangent bundle of the foliation and is denoted  as usual by $F^*$.  We fix a $G$-invariant riemannian metric on $M$ so that $G$ acts by isometries of $M$, and so that we can  identify  $F^*$ with a  $G$-subbundle of $T^*M$ when needed. We denote by $\mathcal{G}$ the holonomy groupoid that will be confused with the manifold of its arrows. We assume for simplicity that $\maG$ is Hausdorff  so that $M=\maG^{(0)}$ can be identified with a closed subspace (and a submanifold) of  $\maG$.  We denote as usual by $r$ and $s$ respectively the range and source maps of $\maG$ and by $\mathcal{G}_x:=s^{-1}(x)$ and $\mathcal{G}^x:=r^{-1}(x)$.  
The compact group $G$ acts obviously on $\maG$ by groupoid diffeomorphisms, hence $r$ and $s$ are $G$-equivariant submersions. The $G$-invariant riemannian metric induces a $G$-invariant riemannian metric on the leaves, {which in turn induces  a $G$-invariant  leafwise Lebesgue measure. This allows to define our $G$-invariant Haar system $\nu$ on $\maG$.} More precisely, on each holonomy cover $s:\maG^x:=r^{-1} (x)\to L_x$ of the leaf $L_x$ through $x\in M$, we have the well defined ''pull-back'' measure $\nu^x$, see for instance \cite{Connes:integration:non:commutative}. The family $\nu^\bullet:=({\nu^x})_{x\in M}$ is then easily seen to be a (continuous and even smooth) Haar system for $\maG$ in the sense of \cite{renault2006groupoid}. Similarly we may define the  measure $\nu_x$ on the holonomy cover $r:\maG_x\to L_x$ but this latter can also be seen as the image of $\nu^\bullet$ under the diffeomorphism $\gamma\mapsto \gamma^{-1}$. The $G$-invariance of the Haar system means that $g_*\nu^x = \nu^{gx}$ for any  $(g,x)\in G\times M$ or said differently, that for any $f\in C_c(\maG)$ one has
$$
\int_{\maG^{gx}} f(\gamma) d\nu^{gx} (\gamma) = \int_{\maG^{x}} f(g \gamma) d\nu^{x} (\gamma).
$$
The space $C_c(\maG)$ of compactly supported continuous functions on $\maG$ is endowed with the usual structure of an involutive convolution algebra for the rules
$$
(f_1f_2) (\gamma) := \int_{\gamma_1\in \maG^{r(\gamma)}} f_1 (\gamma_1) f_2 (\gamma_1^{-1} \gamma) d\nu^{r(\gamma)} (\gamma_1)\quad \text{ and }\quad f^*(\gamma) := {\overline{f(\gamma^{-1})}}.
$$
Moreover, for any given $x\in M$, we have a $*$-representation $\lambda_x: C_c(\maG) \rightarrow \maL (L^2(\maG_x))$ given by
$$
\lambda_x (f) (\xi) (\gamma)\; := \;\int_{\gamma_2\in \maG_x} f(\gamma\gamma_2^{-1}) \xi (\gamma_2) d\nu_x (\gamma_2).
$$
The completion of $C_c(\maG)$ in the direct sum representation $\oplus_{x\in M} \lambda_x$ is then a well defined $C^*$-algebra called the Connes algebra of the foliation $(M, \maF)$ and denoted $C^*(M, \maF)$, see \cite{Connes:integration:non:commutative} for more details.

Let $\pi : E=E^+\oplus E^- \rightarrow M$ be a $\Z_2$-graded hermitian vector bundle on $M$ which is assumed to be $G$-equivariant with a $G$-invariant hermitian structure. Then, there is a classical $G$-equivariant Hilbert $C^*(M,\mathcal{F})$-module $\maE$ associated with $E$, which is composed of sections of the $\maG$-equivariant continuous field of Hilbert spaces $(L^2(\maG_x, r^*E))_{x\in M}$ and that we now recall for the sake of completeness. 



Setting for $\eta \in C_c(\mathcal{G},r^*E)$ and $f\in C_c(\mathcal{G})$
$$
\eta \cdot f(\gamma)=\int_{\mathcal{G}^{r(\gamma)}}\eta(\gamma_1)f(\gamma_1^{-1}\gamma)d\nu^{r(\gamma)}(\gamma_1),
$$ 
we get  a right $C_c(\mathcal{G})$-module structure on $C_c(\mathcal{G},r^*E)$. The prehilbertian structure of this module is obtained by using the $C_c(\mathcal{G})$-valued scalar product  given by
$$
\langle \eta ',\eta\rangle (\gamma)=\int_{\mathcal{G}_{r(\gamma)}}\langle \eta '(\gamma_1) , \eta(\gamma_1\gamma)\rangle_{E_{r(\gamma_1)}}d\nu_{r(\gamma)}(\gamma_1),\quad \text{for } \eta, \eta '\in C_c(\mathcal{G},r^*E)\ \mathrm{and}\ \gamma \in \mathcal{G}.
$$
That $\langle \eta, \eta\rangle $ is a non-negative element of the $C^*$-algebra is standard. Moreover, all the axioms for a prehilbertian module are satisfied.
The completion of $C_c(\mathcal{G},r^*E)$ for  $\|\bullet\|_{\mathcal{E}}:=\|\langle \bullet,\bullet\rangle\|^{1/2}_{C^*(M,\mathcal{F})}$ is our Hilbert $C^*(M,\mathcal{F})$-module $\mathcal{E}$.\\

Our goal now is to use the $G$-action on $(M, \maF)$ and $E$ in order to define a representation $\pi$ of the $C^*$-algebra $C^*G$ in adjointable operators of the Hilbert module $\maE$. 
An easy inspection of the case  of  simple foliations  shows that an extra compatibility condition between the action of $G$ and the foliation $\maF$  needs to be imposed. Roughly speaking, we need  an action of $G$ which preserves each Hilbert space $L^2(\maG_x, r^*E)$ so that the average representation of $C^*G$ would make sense. We proceed now to explain this action which is taken from \cite{BenameurHeitschlefschetz}. 
Recall the action groupoid $M\rtimes G$, which as a space of arrows is just $M\times G$, with the rules
$$
s(x, g)=x,\; r(x,g) =gx \text{ and } (gx, k) \circ (x, g) = (x, kg).
$$

\begin{definition}\cite{BenameurHeitschlefschetz}\
\begin{itemize}
\item A leafwise diffeomorphism $f$ is called a holonomy diffeomorphism if there exists a smooth
map $\theta^f: M\to \maG$ so that for any $x\in M$,
$$
s(\theta^f(x)) = x\text{ and } r(\theta^f(x)) = f(x),
$$
 and the holonomy along $\theta^f (x)$ coincides with the induced action of $f$ on transversals. 
 \item The action of the compact group $G$ is a holonomy action if any $g\in G$ is a holonomy diffeomorphism and the induced  groupoid morphism 
 $$
 \theta:M\rtimes G \longrightarrow  \maG,
 $$
 given by $\theta (x, g) = \theta^g(x)$ is smooth.
\end{itemize}
\end{definition}

{Notice that when $\theta^f$ exists, it is unique.
From the very definition,}  if $f$ is a holonomy diffeomorphism, then for any $\gamma\in \maG$, one has
$$
\theta^f (r(\gamma)) \; \gamma \; \theta^f(s(\gamma))^{-1} \; = \; f(\gamma).
$$
The  holonomy diffeomorphisms form a subgroup of the group of leaf-preserving diffeomorphisms of $M$. If for instance $f$ is a holonomy diffeomorphism, then so is $f^{-1}$ and we have $\theta^{f^{-1}} (x) = \theta^{f} (f^{-1}(x))^{-1}$. When the $G$-action is a holonomy action, we  have 
$$
g\theta^k(x) = \theta^{gkg^{-1}} (gx)\text{ and } \theta^k(gx) \; \theta^g(x) =\theta^{kg} (x).
$$
The following lemma is proved in  \cite{BenameurHeitschlefschetz}:

\begin{lem}\cite{BenameurHeitschlefschetz}
The leaf-preserving diffeomorphism $f$ is a holonomy diffeomorphism in the following cases:
\begin{enumerate}
\item
When the holonomy is trivial, and the foliation is tame {in the sense of \cite{CantwellConlon}. See also \cite{BenameurHeitschlefschetz}.} 
\item
When the foliation is Riemannian.
\item
When $f$ belongs to a connected (Lie) group which acts on $V$ by leaf-preserving diffeomorphisms.  More generally, if $f$ belongs to the path connected component of a holonomy diffeomorphism $g$  in the group of leaf-preserving diffeomorphisms. 
\item
When restricted to the saturation $sat(V^f)$ of the fixed point submanifold $V^f$, that is the union of the leaves that intersect $V^f$. 
\end{enumerate}
\end{lem}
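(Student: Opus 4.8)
The plan is to verify each of the four cases separately, in each case producing the required smooth map $\theta^f:M\to\maG$ splitting the source and range over the graph of $f$ and implementing the correct holonomy. The unifying idea is that a holonomy diffeomorphism is exactly one whose ``transverse action'' is captured by an element of the holonomy groupoid, so in each case I need to construct a canonical path (up to holonomy) from $x$ to $f(x)$ inside the leaf $L_x$, depending smoothly on $x$.

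First, for case (3), which is really the heart of the matter, I would argue as follows. If $f$ lies on a path $t\mapsto f_t$ of leaf-preserving diffeomorphisms with $f_0=g$ a holonomy diffeomorphism (and in particular if $f_0=\id$), then for each $x$ the curve $t\mapsto f_t(x)$ is a path in the leaf $L_x$ starting at $g(x)$; lifting it to the holonomy groupoid and concatenating with $\theta^g(x)$ gives a candidate $\theta^f(x)\in\maG$ with $s(\theta^f(x))=x$, $r(\theta^f(x))=f(x)$. I should check two things: that this is well defined, i.e. independent of the chosen path (this uses that the holonomy groupoid only remembers the path class modulo homotopy fixing the transverse data, and that two paths of leaf-preserving diffeomorphisms between the same endpoints induce the same transverse germ — this is where the leaf-preserving hypothesis is essential, since a homotopy through such diffeomorphisms produces a homotopy of leafwise paths), and that it is smooth in $x$ (inherited from smoothness of the flow/path and of the groupoid lift). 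That the holonomy along $\theta^f(x)$ equals the transverse action of $f$ is then immediate from the construction. For the connected Lie group case one simply takes $f_t$ to be a path in the group from $\id$ to $f$; for the ``same path component'' statement one concatenates with a path to $g$ and uses the composition rules for $\theta$ stated just above the lemma.

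For case (1), trivial holonomy plus tameness means $\maG$ is (a quotient of) the leafwise homotopy groupoid with trivial holonomy cocycle, so transversals are rigid and any leaf-preserving diffeomorphism necessarily acts trivially on them; the tameness/triviality lets one pick $\theta^f(x)$ to be (the class of) any leafwise path from $x$ to $f(x)$ within a foliated chart, glued smoothly using a partition of unity subordinate to a good cover, and the choice is holonomy-irrelevant precisely because the holonomy is trivial. For case (2), a Riemannian foliation carries a transverse metric for which the holonomy pseudogroup acts by local isometries and leaf-preserving diffeomorphisms are automatically transversely isometric; one can then use the leafwise geodesic spray, or Molino theory, to produce a smooth leafwise path from $x$ to $f(x)$ — e.g. via the transverse parallelism on the orthonormal transverse frame bundle — whose holonomy realises the (isometric) transverse action of $f$. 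Case (4) is the easiest: on $\mathrm{sat}(V^f)$ one defines $\theta^f$ on the fixed submanifold $V^f$ itself to be the constant family of units $\gamma=x$ (since $f$ fixes $V^f$ pointwise and, being leaf-preserving, fixes the transversals through points of $V^f$), and then extends along the leaves through $V^f$ using the leafwise exponential from $V^f$, which is smooth on the saturation; the holonomy condition holds because $f$ restricted to a transversal through a point of $\mathrm{sat}(V^f)$ is conjugate to its linearisation at the corresponding point of $V^f$, which is trivial.

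The main obstacle I anticipate is case (3): showing that $\theta^f(x)$ is independent of the chosen path of leaf-preserving diffeomorphisms, and that the resulting holonomy genuinely coincides with the transverse action of $f$ rather than merely having the correct source and range. This is the step where one must use in an essential way that the interpolating diffeomorphisms preserve the leaves (so that a homotopy between two such paths sweeps out a leafwise homotopy, killing the ambiguity modulo holonomy), and it is also the step that forces the connectedness hypothesis. By contrast, the smoothness of $x\mapsto\theta^f(x)$ in all four cases is routine once the construction is fixed, since it is built from smooth flows, smooth leafwise exponentials, and the smooth structure on $\maG$, and the functoriality identities $g\theta^k(x)=\theta^{gkg^{-1}}(gx)$ and $\theta^k(g(x))\theta^g(x)=\theta^{kg}(x)$ displayed above the lemma are then verified by uniqueness of path lifts.
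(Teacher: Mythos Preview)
The paper does not prove this lemma; it is quoted from \cite{BenameurHeitschlefschetz} (the sentence immediately preceding it reads ``The following lemma is proved in \cite{BenameurHeitschlefschetz}''), so there is no in-paper argument against which to compare your proposal.

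On its own merits your outline is the standard one, and case (3) --- the only case the present paper actually relies on --- is handled correctly: the curve $t\mapsto f_t(x)$ is leafwise, and its holonomy coincides with the transverse action of $f$ precisely because the nearby curves $t\mapsto f_t(y)$ transport $y$ to $f(y)$. One over-complication: you spend effort on path-independence, but the definition only demands the \emph{existence} of some smooth $\theta^f$, so any fixed smooth path in the group (or in the path component of $g$) does the job and no uniqueness argument is needed. In case (4) your justification is slightly misstated: the differential of $f$ at a fixed point is generally \emph{not} trivial, so ``conjugate to its linearisation, which is trivial'' is not the right reason. The correct reason the induced transverse germ is the identity at $x\in V^f$ is simply that $f$ preserves each leaf, so for $y$ on a small transversal through $x$ the point $f(y)$ lies in $L_y$ and near $x$, hence on the same plaque as $y$; thus $\theta^f(x)=x$ (the unit) works on $V^f$, and the extension to $sat(V^f)$ is then forced by the relation $\theta^f(r(\gamma))\,\gamma\,\theta^f(s(\gamma))^{-1}=f(\gamma)$ recorded just above the lemma, rather than by an ad hoc leafwise exponential.
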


As an obvious corollary for instance, we see that when the compact Lie group $G$ is  connected, then its leaf-preserving action is automatically a holonomy action. As for the non-foliated case, we are mainly interested, especially for the cohomological index formula, in the case of the action of a compact connected Lie group $G$. However, this assumption is not needed yet and only the holonomy assumption will be necessary.\\
{\bf{From now on, we shall assume that the leafwise $G$-action is a holonomy action.}} \\

Using {the groupoid morphism} $\theta$, we get for any $x\in M$ an action of $G$ on the manifold $\maG_x$ by setting
$$
\Phi : G\times \maG_x \longrightarrow \maG_x \text{ given by } \Phi (g, \gamma) := {\theta^g(r(\gamma))\gamma}.
$$
{Indeed, one has}
$$
{\Phi (g, \Phi (k, \gamma)) \, = \, \theta^g (k r(\gamma)) \Phi (k, \gamma) =  \theta^g (k r(\gamma)) \theta^k (r(\gamma)) \gamma = \theta^{gk} (r(\gamma)) \gamma \, = \, \Phi (gk, \gamma).}
$$
The holonomy covering map $r:\maG_x\rightarrow L_x$ is then $G$-equivariant, so that $\Phi$ can be understood as an $r$-lift of the original $G$ action which fixes the source map $s$. Using the $G$-invariance of the leafwise Lebesgue-class measure, it is then easy to check using the definition of the measure $\nu_x$ that this latter is $\Phi$-invariant, i.e.
$$
\int_{\mathcal{G}_x}f(\Phi(g,\gamma))d\nu_x(\gamma)=\displaystyle\int_{\mathcal{G}_x}f(\gamma ) d\nu_{x}(\gamma).
$$
{Indeed, this follows from the relation $\Phi(g,\gamma)= (g\gamma) \; \theta^g(s(\gamma))$.}
We can now  define our unitary $G$-action $U_x$  on the Hilbert space $L^2(\maG_x, r^*E)$ by setting
$$
(U_{x,g} \eta )(\gamma):=g \, \eta(\Phi (g^{-1}, \gamma)),\quad  \eta \in C_c(\mathcal{G}_x,r^*E),\  g\in G, \text{ and } \gamma \in \mathcal{G}_x.
$$
The  family $U=(U_x)_{x\in M}$ actually represents the group $G$ in the  unitary adjointable operators on the Hilbert module $\maE$. More precisely:

\begin{lem}\label{lem:unitary:U}
For the trivial action of $G$ on $C^*(M,\mathcal{F})$, the Hilbert module $\mathcal{E}$ is a $G$-Hilbert module. {Indeed, 
for any $\eta,\eta'\in C_c(\maG, r^*E)$ and $g\in G$ we have: 
$$
\langle U_g \eta ,\eta' \rangle =\langle\eta , U_{g^{-1}} \eta'\rangle,
$$ 
so in particular, the operator $U_g$ extends to an adjointable (unitary) operator on the Hilbert module $\maE$.} 
\end{lem}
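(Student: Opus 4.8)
The plan is to reduce the statement to a direct computation with the explicit formulas defining the $C^*(M,\maF)$-valued inner product on $\maE$ and the operator $U_g$, and then invoke density of $C_c(\maG, r^*E)$ in $\maE$. First I would recall that an adjointable operator $T$ on a Hilbert $C^*(M,\maF)$-module is characterized by the existence of $T^*$ satisfying $\langle T\eta, \eta'\rangle = \langle \eta, T^*\eta'\rangle$ for all $\eta,\eta'$ in a dense submodule; since $G$ is a group, once we establish $\langle U_g \eta, \eta'\rangle = \langle \eta, U_{g^{-1}}\eta'\rangle$ on $C_c(\maG, r^*E)$, the adjoint of $U_g$ is $U_{g^{-1}}$, and applying the identity twice gives $U_g^* U_g = U_{g^{-1}} U_g = \Id = U_g U_{g^{-1}} = U_g U_g^*$, so $U_g$ is unitary. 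Continuity of $g\mapsto U_g$ in the appropriate (strict/strong) topology, which is needed for $\maE$ to be a genuine $G$-Hilbert module, follows from the smoothness of $\theta$ and the continuity of the $G$-action on $M$ and $E$ by a standard approximation argument on $C_c(\maG, r^*E)$; I would mention this but not belabor it.

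The heart of the matter is therefore the identity $\langle U_g \eta, \eta'\rangle = \langle \eta, U_{g^{-1}}\eta'\rangle$. I would compute $\langle U_g\eta, \eta'\rangle(\gamma)$ for $\gamma\in\maG$ by plugging into the definition
$$
\langle U_g\eta, \eta'\rangle(\gamma) = \int_{\maG_{r(\gamma)}} \langle (U_g\eta)(\gamma_1), \eta'(\gamma_1\gamma)\rangle_{E_{r(\gamma_1)}}\, d\nu_{r(\gamma)}(\gamma_1),
$$
and then expand $(U_g\eta)(\gamma_1) = g\,\eta(\Phi(g^{-1},\gamma_1)) = g\,\eta\big((g^{-1}\gamma_1)\,\theta^{g^{-1}}(r(\gamma))\big)$, using that the $G$-action on the fibers of $r^*E$ over $E_{r(\gamma_1)}$ is unitary to move the $g$ across the inner product: $\langle g\cdot v, w\rangle_{E_{r(\gamma_1)}} = \langle v, g^{-1}\cdot w\rangle_{E_{g^{-1}r(\gamma_1)}}$. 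The key geometric step is the change of variables $\gamma_1 \mapsto \Phi(g^{-1},\gamma_1)$ on $\maG_{r(\gamma)}$ (note $\Phi(g^{-1},\cdot)$ maps $\maG_{g\cdot x}$ to $\maG_x$; one must track base points carefully), which is measure-preserving by the $\Phi$-invariance of $\nu_x$ established just before the lemma. Under this substitution one needs the bookkeeping identity
$$
\Phi(g^{-1},\gamma_1)\,\gamma \;=\; \Phi\big(g^{-1}, \gamma_1\,(g\gamma)\big)
$$
(equivalently, compatibility of $\Phi(g^{-1},\cdot)$ with right translation combined with the $G$-equivariance of $\maG$ acting on itself), so that the argument $\eta'(\gamma_1\gamma)$ turns into $\eta'(\Phi(g^{-1},\cdot))$ evaluated at the new variable, which is exactly $(U_{g^{-1}}\eta')$ up to the $g^{-1}$ on the bundle side that we already produced; one then recognizes the resulting integral as $\langle \eta, U_{g^{-1}}\eta'\rangle(\gamma)$.

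The main obstacle I anticipate is purely organizational: keeping the source/range base points of $\gamma$, $\gamma_1$, and of the holonomy elements $\theta^g(x)$ straight through the change of variables, since $\Phi(g,\cdot)$ shuffles $\maG_x$ and $\maG_{gx}$ and the inner product is fiberwise over varying points of $M$. The cleanest way to control this is to use the two composition identities for $\theta$ recorded in the excerpt, namely $g\theta^k(x) = \theta^{gkg^{-1}}(gx)$ and $\theta^k(gx)\theta^g(x) = \theta^{kg}(x)$, together with the displayed conjugation formula $\theta^f(r(\gamma))\,\gamma\,\theta^f(s(\gamma))^{-1} = f(\gamma)$, to rewrite every occurrence of $\Phi$ in normal form before integrating. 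Once the substitution and these identities are applied, no genuine analytic difficulty remains: the inner product $\langle\eta,\eta\rangle$ is already known to be a positive element of $C^*(M,\maF)$ and all manipulations take place on the dense subalgebra $C_c$, so the extension to $\maE$ is automatic and $U_g$ is seen to be an isometric adjointable map with adjoint $U_{g^{-1}}$, hence unitary.
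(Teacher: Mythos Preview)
Your approach is essentially the same as the paper's: expand $\langle U_g\eta,\eta'\rangle(\gamma)$, use unitarity of the $G$-action on the fibres of $E$, perform the change of variable $\gamma'\mapsto \gamma_1:=\Phi(g^{-1},\gamma')$ (which stays inside $\maG_{r(\gamma)}$ and preserves $\nu_{r(\gamma)}$), and then recognise the result as $\langle\eta,U_{g^{-1}}\eta'\rangle(\gamma)$. One small correction: $\Phi(g^{-1},\cdot)$ is a self-map of $\maG_{r(\gamma)}$, not a map $\maG_{g\cdot x}\to\maG_x$, and your displayed bookkeeping identity $\Phi(g^{-1},\gamma_1)\gamma=\Phi(g^{-1},\gamma_1(g\gamma))$ is not well-formed (the product $\gamma_1(g\gamma)$ requires $s(\gamma_1)=g\,r(\gamma)$, which fails); the identity actually used is $\Phi(g,\gamma')\gamma=\Phi(g,\gamma'\gamma)$ (recorded in the paper just before the lemma), applied after inverting the substitution, together with $\theta^g(r(\gamma))\gamma=(g\gamma)\theta^g(s(\gamma))$. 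With that fix your computation goes through verbatim.
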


\begin{proof}
{For  $\eta\in \maE$, $f\in C^*(M, \maF)$ and $g\in G$, the relation $
(U_g\eta)\cdot f = U_g (\eta\cdot f)$
can be easily verified by direct computation, however this will be automatically satisfied since the operator $U_g$ is adjointable. More precisely, }we have 
{$$
\langle U_g \eta , \eta'\rangle (\gamma)=\int_{\mathcal{G}_{r(\gamma)}}\left\langle g\eta\left(\theta^{g^{-1}}(r(\gamma'))\gamma'\right), \eta'(\gamma'\gamma)\right\rangle_{E_{r(\gamma')}} d\nu_{r(\gamma)}(\gamma'). 
$$}
{
Setting $\gamma_1=\theta^{g^{-1}}(r(\gamma'))\gamma'$ and using the  $G$-invariance of the metric on $E$ as well as the $\Phi$-invariance, we get:
\begin{eqnarray*}
\langle U_g \eta , \eta' \rangle (\gamma) 
&=& \int_{\mathcal{G}_{r(\gamma)}}\left\langle \eta (\gamma_1), g^{-1} \eta' \left(\theta^{g}(r(\gamma_1))\gamma_1 \gamma\right)\right\rangle_{E_{r(\gamma_1)}} d\nu_{r(\gamma)}(\gamma_1)\\
& = & \langle  \eta , U_{g^{-1}} \eta' \rangle (\gamma).
\end{eqnarray*}
}
\end{proof}

\begin{cor}\label{cor:G:C(M,F):interne}
The $G$-action on  the foliation $C^*$-algebra $C^*(M, \maF)$  is  {inner}, i.e. it is implemented by unitary multipliers and we have $\lambda (g f)=U_g\circ \lambda (f) \circ U_{g^{-1}}$ for any $f\in C_c(\maG)$. 
\end{cor}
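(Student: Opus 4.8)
The plan is to put the statement into the multiplier picture and then check the conjugation formula by a direct fibrewise computation. Note first that the representation $\lambda=\bigoplus_{x\in M}\lambda_x$ is the one attached to the trivial line bundle $E=M\times\C$: for that $E$ the $C_c(\maG)$-valued inner product of the module $\maE$ of Section~\ref{C(B):module} reduces to $\langle\eta',\eta\rangle=(\eta')^*\eta$, so $\maE$ is isometrically the standard module $C^*(M,\maF)$ over itself, whence $\maL(\maE)\cong M(C^*(M,\maF))$, $\maK(\maE)\cong C^*(M,\maF)$, and $\lambda$ is the canonical embedding of $C^*(M,\maF)$ into its multiplier algebra by left multiplication. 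Applying Lemma~\ref{lem:unitary:U} with this $E$, each $U_g$ is a unitary element of $\maL(\maE)=M(C^*(M,\maF))$, i.e.\ a unitary multiplier; so the whole content of the corollary is the identity $\lambda(g\cdot f)=U_g\circ\lambda(f)\circ U_{g^{-1}}$ for $f\in C_c(\maG)$, where $g\cdot f=f(g^{-1}\,\cdot\,)$ is the automorphism of $C^*(M,\maF)$ induced by the (Haar-invariant) $G$-action on $\maG$, since this identity says exactly that the $G$-action is implemented by the $U_g$.

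For that identity I would argue fibrewise over $x\in M$: since $f\mapsto\lambda(f)$, $f\mapsto\lambda(g\cdot f)$ and $T\mapsto U_gTU_{g^{-1}}$ are norm-continuous and $C_c(\maG_x)$ is dense in $L^2(\maG_x)$, it suffices to evaluate $\bigl[U_{x,g}\,\lambda_x(f)\,U_{x,g^{-1}}\,\xi\bigr](\gamma)$ for $\xi\in C_c(\maG_x)$ and $\gamma\in\maG_x$. Unwinding the definitions of $\lambda_x$ and $U_{x,g}$ and using $(U_{x,g^{-1}}\xi)(\gamma_2)=\xi(\Phi(g,\gamma_2))$, this equals $\int_{\maG_x}f\bigl(\Phi(g^{-1},\gamma)\,\gamma_2^{-1}\bigr)\,\xi\bigl(\Phi(g,\gamma_2)\bigr)\,d\nu_x(\gamma_2)$; because $\Phi$ is a genuine action of $G$ on $\maG_x$ with $\Phi(g,\Phi(g^{-1},\gamma_2))=\gamma_2$ and $\nu_x$ is $\Phi$-invariant, the substitution $\gamma_2\rightsquigarrow\Phi(g^{-1},\gamma_2)$ turns this into $\int_{\maG_x}f\bigl(\Phi(g^{-1},\gamma)\,\Phi(g^{-1},\gamma_2)^{-1}\bigr)\,\xi(\gamma_2)\,d\nu_x(\gamma_2)$. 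The single genuinely computational point is then the identity
\[
\Phi(g^{-1},\gamma)\,\Phi(g^{-1},\gamma_2)^{-1}\;=\;g^{-1}\bigl(\gamma\,\gamma_2^{-1}\bigr),\qquad\gamma,\gamma_2\in\maG_x,
\]
which holds because $\Phi(g^{-1},\gamma)=(g^{-1}\gamma)\,\theta^{g^{-1}}(x)$ and $\Phi(g^{-1},\gamma_2)=(g^{-1}\gamma_2)\,\theta^{g^{-1}}(x)$ carry the \emph{same} right factor $\theta^{g^{-1}}(x)$, which then cancels against its inverse, leaving $(g^{-1}\gamma)(g^{-1}\gamma_2)^{-1}=g^{-1}(\gamma\gamma_2^{-1})$ since $G$ acts on $\maG$ by groupoid automorphisms. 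Substituting this back and recognising $f(g^{-1}\delta)=(g\cdot f)(\delta)$, the integral becomes $\int_{\maG_x}(g\cdot f)(\gamma\gamma_2^{-1})\,\xi(\gamma_2)\,d\nu_x(\gamma_2)=\bigl[\lambda_x(g\cdot f)\,\xi\bigr](\gamma)$, as wanted.

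The only thing that really needs care is the bookkeeping with the holonomy cocycle $\theta$ — being sure that the two copies of $\theta^{g^{-1}}(x)$ genuinely cancel — and that is exactly where one uses that $U$ was constructed from the \emph{source-fixing} $r$-lift $\Phi$, so that $\Phi(g^{-1},\cdot)$ and $\Phi(g,\cdot)$ are mutually inverse bijections of $\maG_x$ preserving $\nu_x$; both facts have already been recorded before the statement. The identification $\maL(\maE)\cong M(C^*(M,\maF))$ and the density/continuity reductions are routine, so I do not expect any serious obstacle: the corollary is in effect an immediate consequence of Lemma~\ref{lem:unitary:U} once this cancellation is spelled out.
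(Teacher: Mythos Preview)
Your proof is correct and follows essentially the same route as the paper: specialize Lemma~\ref{lem:unitary:U} to the trivial line bundle to get unitary multipliers, then perform the fibrewise computation of $U_g\lambda(f)U_{g^{-1}}$ using the change of variable $\gamma_2\mapsto\Phi(g^{-1},\gamma_2)$ and the cancellation of the $\theta^{g^{-1}}(x)$ factors. The paper's proof is terser about the multiplier identification and presents the substitution in one step, but the content is the same.
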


\begin{proof}
 When $E$ is the trivial line bundle, the previous lemma  shows that $G$ acts by unitary multipliers $(U_g)_{g\in G}$ of the foliation $C^*$-algebra $C^*(M, \maF)$. We now compute
{
\begin{eqnarray*}
\left(U_g\circ \lambda (f)\circ U_{g^{-1}}\right) (\xi) (\gamma) & = & g\, \left[ \lambda(f) (U_{g^{-1}}\xi)\right] \left( \theta^{g^{-1}}(r(\gamma))\gamma\right)\\
& = & \int_{\maG_{s(\gamma)}} f\left( \theta^{g^{-1}}(r(\gamma))\gamma \gamma_1^{-1}\right) \xi \left( \theta^{g}(r(\gamma_1)\gamma_1\right) d\nu_{s(\gamma)} (\gamma_1)\\
& = & \int_{\maG_{s(\gamma)}} f\left(\theta^{g^{-1}}(r(\gamma))\gamma \gamma_1^{-1} \theta^{{g}}({g^{-1}} r(\gamma_1))\right) \xi (\gamma_1) d\nu_{s(\gamma)} (\gamma_1),
\end{eqnarray*}
where the last equality is obtained by using again the $\Phi$-invariance of the Haar system. 
The result follows since 
$$
\theta^{g^{-1}}(r(\gamma))\gamma\gamma_1^{-1}\theta^{{g}}({g^{-1}} r(\gamma_1))= g^{-1}\gamma\theta^{g^{-1}}(s(\gamma)) \theta^g(g^{-1} s(\gamma)) g^{-1}\gamma_1^{-1} = g^{-1}(\gamma\gamma_1^{-1}).
$$ 
}
\end{proof}

\begin{prop}\label{prop:pi*rep} 

We set for $\eta\in C_c(\maG, r^*E)$  and  $ \varphi \in C(G)$:
\begin{equation}\label{remarque:piGequi}
\pi(\varphi)(\eta)\; :=\; \int_G\varphi(g)\, (U_g\eta)\,  dg.
\end{equation}
Then $\pi$ extends to an involutive representation $\pi $ of $C^*G$ in the Hilbert module $\mathcal{E}$. More precisely, $\pi$ is  a continuous $*$-homomorphism into the $C^*$-algebra of adjointable operators. 
\end{prop}


\begin{proof}
Since $U_g$ is adjointable with $U_g^*=U_{g^{-1}}$, we obtain that $\pi(\varphi)$ is also adjointable with $\pi(\varphi^*)=\pi(\varphi)^*$.
The relation $\pi(\varphi \star \psi)=\pi(\varphi)\circ \pi(\psi)$ is also immediately verified.
It follows that $\pi$ is a $\ast$-homomorphism {which satisfies, by its very definition, the estimate $\|\pi(\varphi)\|\leq \|\varphi\|_{L^1G}$ for $\varphi\in L^1G$}. Hence we get a well defined continuous $*$-representation of the $C^*$-algebra $C^*G$. 
\end{proof}

{
\begin{remarque}
If we endow $C^*G$ with the conjugation action $Ad$ of $G$, then it is easy to check that the representation $\pi$ is  $G$-equivariant, i.e. $
\pi (Ad_g\varphi) (\eta) = U_g\circ \pi (\varphi)\circ U_{g^{-1}}$ for $\varphi\in C^*(G)$, $\eta\in \maE$ and $g\in G$. 
\end{remarque}}

\begin{defi}\cite{Connes:integration:non:commutative}
Let $E=E^+\oplus E^-$ be a $\Z_2$-graded vector bundle over $M$. A (classical) pseudodifferential $\mathcal{G}$-operator $P$ of order $m$ acting from $E^+$ to $E^-$ is a {smooth} family $(P_x)_{x\in M}$, where 
$$
P_x: C_c^{\infty}(\mathcal{G}_x,r^*E^+)\longrightarrow C_c^{\infty}(\mathcal{G}_x,r^*E^-),
$$ 
is a (uniformly supported and classical) pseudodifferential operator of order $m$, with the right  $\mathcal{G}$-invariance property:
$$
P_{r(\gamma)}R_\gamma = R_\gamma P_{s(\gamma)}.
$$ 
\end{defi}

The uniform support is assumed here for simplicity and proper support would suffice  in order to preserve the space of compactly supported sections, see \cite{NWX}. We shall denote by $\Psi^m(M, \maF; E^+,E^-)$ the space of (classical) pseudodifferential $\maG$-operators on $M$ of order $m$. So such pseudodifferential $\maG$ operators correspond to  longitudinal pseudodifferential operators on the graph manifold $\maG$ with respect to the foliation $r^*\maF$, but which are $\maG$-invariant so that they induce operators downstairs acting over the leaves of $(M, \maF)$. We shall also sometimes call the elements of $\Psi^m(M, \maF; E^+,E^-)$ longitudinal or leafwise pseudodifferential operators on $(M, \maF)$ since no confusion can occur. 

The principal symbol of such a longitudinal operator $P$ of order $m$ is defined as usual by the formula:
$$
\sigma_m(P)(x,\xi)= \sigma_{pr}(P_x)(x,\xi),\  \text{ for } (x,\xi)\in T^*_x\maG_x\simeq F^*_x,
$$
where $\sigma_{pr}(P_x)$ is the principal symbol of the $m$-th order classical pseudodifferential operator $P_x$ acting on the manifold $\mathcal{G}_x$. {When $E$ is a $G$-equivariant $\Z_2$-graded hermitian vector bundle,  the longitudinal operator $P$ will be $G$-invariant  if  for any $g\in G$, the family $P$ commutes with the family of unitaries $U_{g, \bullet}$, i.e. 
$$
U_{g, x} \circ P_{x}=P_x\circ  U_{g, x}, \quad \forall g, x.
$$
In this case, the principal symbol of $P$ is $G$-invariant with the action on the leafwise cotangent bundle $F^*$  obtained as usual by codifferentiating the original $G$-action. }

A zero-th order longitudinal pseudodifferential operator $P_0 : C^{\infty}_c(\mathcal{G},r^*E^+) \rightarrow C^{\infty }(\mathcal{G},r^*E^-)$ extends  into an adjointable operator, still denoted $P_0$, between the Hilbert modules $\maE^+$ and $\maE^-$ corresponding to the vector bundles $E^+$ and $E^-$ respectively \cite{ConnesSkandalis, Connes:integration:non:commutative}. The formal adjoint of $P_0$ defined over each $\maG_x$, with respect to the hermitian structures and the Haar system, is then again a zero-th order longitudinal pseudodifferential operator acting from $E^-$ to $E^+$.  Moreover, its extension to an adjointable operator from $\maE^-$ to $\maE^+$ is just the adjoint of $P_0$ with respect to the Hilbert module structures. So,  if we denote by $P$ the operator
$P:=\begin{pmatrix}
0&P_0^*\\P_0&0
\end{pmatrix}$, then $P$ is an adjointable operator on $\maE=\maE^+\oplus \maE^-$ which is by construction odd for the $\Z_2$-grading.

\begin{lem}\label{lem:Ppi=piP} With the previous notations, if we assume in addition that $P_0$ is  $G$-invariant, then for any $\varphi \in C^*G$, we have $[\pi(\varphi),P]=0$.
\end{lem}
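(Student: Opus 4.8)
The plan is to reduce the statement to the single operator identity $U_g\circ P=P\circ U_g$ valid for all $g\in G$, and then simply to interchange the bounded operator $P$ with the Haar integral that defines $\pi$.

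First I would translate the $G$-invariance hypothesis into a commutation relation with the $U_g$'s. Recall from the discussion above that $V_{g,g^{-1}x}\circ P_{0,g^{-1}x}\circ V_{g^{-1},x}=U_{g,x}\circ P_{0,x}\circ U_{g^{-1},x}$ on $C_c^\infty(\maG_x,r^*E^+)$ for every $x\in M$ and $g\in G$; since the operators $U_{g,x}$ are unitary with $U_{g,x}^{-1}=U_{g^{-1},x}$ (Lemma~\ref{lem:unitary:U}), the hypothesis $g\cdot P_0=P_0$ is exactly $U_{g,x}\circ P_{0,x}=P_{0,x}\circ U_{g,x}$. Passing to the Hilbert module completions this gives $U_g^-\circ P_0=P_0\circ U_g^+$ as adjointable operators $\maE^+\to\maE^-$, where $U_g^\pm$ denotes the unitary $G$-action on $\maE^\pm$. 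Taking adjoints and replacing $g$ by $g^{-1}$ then yields $U_g^+\circ P_0^*=P_0^*\circ U_g^-$ as well. Writing $U_g=\mathrm{diag}(U_g^+,U_g^-)$ for the induced unitary on $\maE=\maE^+\oplus\maE^-$ and using $P=\begin{pmatrix}0&P_0^*\\ P_0&0\end{pmatrix}$, the two relations combine into
\[
U_g\circ P=P\circ U_g\qquad\text{for all }g\in G.
\]

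Next, fix $\varphi\in C(G)$ and $\eta\in\maE$. By \eqref{remarque:piGequi} and the relation just obtained,
\[
\pi(\varphi)(P\eta)=\int_G\varphi(g)\,U_g(P\eta)\,dg=\int_G\varphi(g)\,P(U_g\eta)\,dg .
\]
The integrand $g\mapsto\varphi(g)\,U_g\eta$ is a norm-continuous $\maE$-valued function on the compact group $G$, and $P$ is a bounded, $C^*(M,\maF)$-linear operator on $\maE$; hence $P$ passes through the integral and the right-hand side equals $P\bigl(\int_G\varphi(g)\,U_g\eta\,dg\bigr)=P(\pi(\varphi)\eta)$. Thus $[\pi(\varphi),P]=0$ on $\maE$ for every $\varphi\in C(G)$.

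Finally, I would pass to $C^*G$ by density and continuity: the image of $C(G)$ is dense in $C^*G$ (it is dense in $L^1(G)$, whose image is dense in $C^*G$ by definition of the latter), the map $\varphi\mapsto\pi(\varphi)$ is norm-continuous by Proposition~\ref{prop:pi*rep}, and $T\mapsto TP-PT$ is norm-continuous on the $C^*$-algebra of adjointable operators on $\maE$; hence $[\pi(\varphi),P]=0$ for all $\varphi\in C^*G$, which is the claim. The only steps requiring any care are the grading/adjoint bookkeeping needed to see that the off-diagonal operator $P$, and not merely $P_0$, commutes with each $U_g$, and the routine justification for pulling a bounded operator through a Haar integral of a continuous module-valued function; beyond these I do not expect any real obstacle.
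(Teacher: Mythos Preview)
Your proposal is correct and follows essentially the same approach as the paper: both deduce $U_gP=PU_g$ from the identity $V_{g,g^{-1}x}\circ P_{g^{-1}x}\circ V_{g^{-1},x}=U_{g,x}\circ P_x\circ U_{g^{-1},x}$ together with $G$-invariance, then pass to $\pi(\varphi)$ via the defining integral and extend by continuity. Your version is slightly more explicit about the off-diagonal bookkeeping for $P_0$ and $P_0^*$ and uses $C(G)$ rather than $L^1(G)$ as the dense intermediate, but these are inessential differences.
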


\begin{proof}
{As can be checked easily, the operator $P$ is $G$-invariant in the usual sense if and only if $P$ commutes with the unitary $U$ of $\maE$ corresponding to the family of unitaries $(U_{g,x})_{(g, x)\in G\times M}$. Now, let $\varphi\in L^1(G)$, then by definition of $\pi (\varphi)$ we deduce that $\pi (\varphi)\circ P= P\circ \pi (\varphi)$.
Therefore  this commutation relation also holds for any $\varphi\in C^*G$ by continuity. }
%
\end{proof}

\subsection{The moment map and some standard $G$-operators}\label{section:moment:map}

 Assume now that $G$ is a compact {\em Lie} group with Lie algebra $\g$, and that the action of $G$ on $M$ preserves the leaves and is through holonomy diffeomorphisms as explained in the previous section. This is for instance the case for any compact connected Lie group. 
We start by extending some results from \cite[Section 6]{Kasparov:KKindex} to our foliation setting, and for the convenience of the reader we shall use Kasparov's notations from there. For $x\in M$, we hence denote by $f_x : G \rightarrow M$ the map given by  $f_x(g)=g\, x$ and by $f'_x : \g \rightarrow T_xM$ its tangent map at the neutral element of $G$. The dual map of  $f'_x$ is denoted  by $f_x^{'*} : T^*_xM \rightarrow \g^*$.
So, any $X\in \g$ defines as usual the vector field $X^*$ given by $X^*_x:=f'_x (X)$ which,  under our assumptions, is  tangent to the leaves, i.e. $X^*_x\in F_x$ for any $x\in M$. Notice also that 
$g\cdot f_x'(X)=f'_{g\, x}(\text{Ad}(g)X)$, for  $g\in G$, $x\in M$ and $X\in \g$ \cite{Kasparov:KKindex}. 
%

Let $\g_M:=M\times \g$ be the $G$-equivariant trivial bundle of Lie algebras on $M$, associated to $\g$ for the action $g\cdot (x,v)=(g\, x,\text{Ad}(g)v)$. The map $f' :\g_M \rightarrow TM$ defined by $f'(x,v)=f'_x(v)$ is a $G$-equivariant vector bundle morphism.
We  endow $\g_M$ with a $G$-invariant metric and we denote by $\|\cdot \|_x$ the associated family of Euclidean norms.  Up to normalization, we can always assume that $\forall v\in \g$, $\|f'_x(v)\|\leq \|v\|_x$.  Here $\|f'_x(v)\|$ is the norm given by the riemannian metric at $x$. We thus assume from now on that  $\|f'_x\|\leq 1$, $\forall x\in M$. These metrics on $\g_M$ and $TM$ are also used to identify $\g_M$ with $\g_M^*$ and $TM$ with $T^*M$.  Then we can define the map $\phi : T^*M \rightarrow T^*M$ by setting $\phi_x=f'_xf^{'*}_x$. Again according to Kasparov's notations \cite{Kasparov:KKindex}, we introduce the quadratic form $q=(q_x)_{x\in M}$ on the fibers of $T^*M$ by setting: 
\begin{equation}\label{Def-q}
q_x(\xi)=|\langle f'_xf^{'*}_x(\xi),\xi\rangle |=\|f^{'*}_x(\xi)\|_x^2,~\forall (x,\xi)\in T^*M.
\end{equation}
If  $\xi \in T^*_xM$, then it is easy to see that  $ \xi $ is orthogonal to the $G$-orbit   of $m$ if and only if $q_x (\xi) = 0$. Notice also that we 
 have $q_x(\xi)\leq \|\xi\|^2$.

As in the seminal book \cite{atiyah1974elliptic}, we introduce a second order $G$-invariant longitudinal differential operator  $\Delta_G$ whose symbol coincides with $q$. This is achieved for instance by using an orthonormal basis of $\g$ for a bi-invariant metric on the compact Lie group $G$ and by considering the first order differential operators which are the Lie derivatives of the $G$-action, see again  \cite[page 12]{atiyah1974elliptic}. Recall that 
if $X\in \g$ and $\eta\in C^{\infty }(\mathcal{G},r^*E)$, then the Lie derivative  ${\mathscr L} (X)(\eta)$  is defined as
$$
{\mathscr L} (X)(\eta)(\gamma) := \frac{d}{dt}{\vert_{t=0}} (e^{tX}\cdot \eta)(\gamma)= \frac{d}{dt}{\vert_{t=0}} e^{tX}\left(\eta(\Phi(e^{-tX},\gamma)\right).
$$
So, $\mathscr{L} (X)$ preserves each space $C_c^{\infty }(\mathcal{G}_x,r^*E)$ and the corresponding family of first order differential operators is clearly right $\maG$-invariant. 
Note indeed that  $
\Phi(g,\gamma'\gamma)=
\Phi(g,\gamma')\gamma.$ Therefore, 
\begin{eqnarray*}
R_\gamma(\mathscr{L}(X)_{s(\gamma)}\eta)(\gamma')&=& \mathscr{L}(X)_{s(\gamma)}\eta(\gamma'\gamma)\\
&=&\dfrac{d}{dt}{\vert_{t=0}}e^{tX}(\eta(\Phi(e^{-tX},\gamma'\gamma))\\
&=& \dfrac{d}{dt}{\vert_{t=0}}e^{tX}(\eta(\Phi(e^{-tX},\gamma')\gamma)\\
&=&\dfrac{d}{dt}{\vert_{t=0}}e^{tX}\big(R_\gamma(\eta)(\Phi(e^{-tX},\gamma'))\big)\\
&=&\mathscr{L}(X)_{r(\gamma)}R_\gamma(\eta)(\gamma').
\end{eqnarray*}
Then, for any orthonormal basis $\{V_k\}$ of $\g$ with dual basis $\{v_k\}$, we define a longitudinal differential operator $d_G$ by considering the right $\maG$-invariant family 
$$
d_G = \left(d_{G, x}: C_c^{\infty }(\mathcal{G}_x,r^*E) \longrightarrow C_c^{\infty}(\mathcal{G}_x,r^*(E\otimes \g_M^*))\right)_{x\in M}
$$ 
of differential operators between $E$ and $E\otimes \g_M^*$ given by
$$
d_{G, x} (\eta):=\sum \limits_k \mathscr{L}(V_k) \eta \otimes v_k, \quad \forall \eta\in C_c^{\infty }(\mathcal{G}_x,r^*E).
$$
This definition is independent of the choice of the orthonormal basis of $\g$. 
%

\begin{remarque}\label{DeltaG}
We may take for $\Delta_G$ the operator $d^*_Gd_G$. Indeed, it is easy to see that the symbol of $d_G$ at $(x,\xi) \in F$ is given by $\sqrt{-1}~ \ext(f'^{*}_{x}(\xi))$. So the symbol of $d_G^*$ is given by $-\sqrt{-1}~\interieur (f'^{*}_{x}(\xi))$ and hence the principal symbol of $d_G^*d_G$ is given by $\langle f'^{*}_{x}(\xi),f'^{*}_{x}(\xi)\rangle =q_x(\xi)$. 
\end{remarque}

In the same way and working on the manifold $G$ itself with its $G$-action  by left translations, the orthonormal basis $\{V_k\}$ of $\g$ (with dual basis $\{v_k\}$) allows to define the exterior differential of the manifold $G$ as follows.  For any $\varphi\in C^\infty (G)$, let $\dfrac{\partial\varphi}{\partial V}$ be the derivative along the one-parameter subgroup of $G$ corresponding to the vector $V\in \g$, then we get the first-order differential operator $d$ acting on smooth functions on $G$ and valued in  $\g^*$-valued smooth functions on $G$, by setting
$$
d\varphi = \sum_k \dfrac{\partial\varphi}{\partial V_k}\otimes v_k.
$$
We may tensor the representation $\pi: C^*G \to \maL_{C^* (M, \maF)} (\maE)$ with the identity of the vector space $\g^*$ and get the extended map
$$
\pi: C^*G \otimes \g^* \longrightarrow  \maL_{C^* (M, \maF)} (\maE, \maE \otimes \g^*)\simeq \maL_{C^* (M, \maF)} (\maE) \otimes \g^*.
$$
Said differently, we simply set for $\psi\in L^1(G)$ and $v\in \g^*$:
$$
\pi(\psi\otimes v)\eta=\pi (\psi)\eta \otimes v =  \int_G \psi (g) (U_g \eta \otimes v) \; dg, \quad \forall \eta \in \maE.
$$
Here again the map $\pi$ corresponds to  a family $(\pi_x)_{x\in M}$ of maps
$$
\pi_x:  C^*G \otimes \g^* \longrightarrow  \maL (L^2(\maG_x, r^*E))\otimes \g^*. 
$$

\begin{prop}$\cite{Kasparov:KKindex}$\label{prop:dG}
For $\varphi \in C^{\infty}(G)$ and $V\in \g$, we have 
$$
\mathscr{L} (V) \circ \pi (\varphi) = \pi \left(\dfrac{\partial\varphi}{\partial V}\right).
$$
In particular, $d_G(\pi(\varphi ) \eta)=\pi(d\varphi )\eta$ for any $\eta\in C_c^{\infty}(\mathcal{G},r^*E)$, or equivalently $\left(d_{G, x} [\pi_x(\varphi)] =\pi_x (d\varphi)\right)_{x\in M}$.
\end{prop}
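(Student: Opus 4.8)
The plan is to differentiate, under the integral sign, the defining formula $\pi(\varphi)\eta=\int_G\varphi(g)\,(U_g\eta)\,dg$, after rewriting it so that $V$ acts only on the scalar function $\varphi$. First I would record that $\mathscr{L}(V)$ is exactly the infinitesimal generator of the one-parameter group $t\mapsto U_{e^{tV}}$: by the definition recalled above one has $(\mathscr{L}(V)\eta)(\gamma)=\tfrac{d}{dt}\big|_{t=0}\,e^{tV}\bigl(\eta(\Phi(e^{-tV},\gamma))\bigr)=\tfrac{d}{dt}\big|_{t=0}(U_{e^{tV}}\eta)(\gamma)$, since $e^{tV}\cdot\eta=U_{e^{tV}}\eta$. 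Next I would check that, for $\varphi\in C^\infty(G)$ and $\eta\in C^\infty_c(\maG_x,r^*E)$, the section $\pi(\varphi)\eta$ is again smooth and compactly supported, hence lies in the domain of $\mathscr{L}(V)_x$: it is the integral over the \emph{compact} group $G$ of the jointly smooth family $(g,\gamma)\mapsto\varphi(g)(U_g\eta)(\gamma)$, and $\bigcup_{g\in G}\operatorname{supp}(U_g\eta)\subseteq\Phi\bigl(G\times\operatorname{supp}\eta\bigr)$ is relatively compact, being the image of a compact set under the smooth map $\Phi$.

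For the computation itself I would use that $U$ is a genuine unitary representation, so $U_{e^{tV}}\circ U_g=U_{e^{tV}g}$, together with the left-invariance of the bi-invariant Haar measure $dg$. This gives, at the level of sections over each $\maG_x$,
\[
U_{e^{tV}}\bigl(\pi(\varphi)\eta\bigr)=\int_G\varphi(g)\,U_{e^{tV}g}\eta\,dg=\int_G\varphi(e^{-tV}h)\,U_h\eta\,dh ,
\]
after the substitution $h=e^{tV}g$. Since the integrand is smooth in $(t,h)$ with supports contained in a fixed compact set and $G$ is compact, differentiation at $t=0$ commutes with the integral, and
\[
\mathscr{L}(V)\bigl(\pi(\varphi)\eta\bigr)=\int_G\Bigl(\tfrac{d}{dt}\big|_{t=0}\varphi(e^{-tV}h)\Bigr)\,U_h\eta\,dh=\int_G\tfrac{\partial\varphi}{\partial V}(h)\,U_h\eta\,dh=\pi\!\Bigl(\tfrac{\partial\varphi}{\partial V}\Bigr)\eta ,
\]
where the middle equality records the definition of $\partial\varphi/\partial V$ as the derivative of $\varphi$ along the one-parameter subgroup generated by $V$ (with the orientation/sign convention of \cite{Kasparov:KKindex} for $d$ on $C^\infty(G)$, i.e. the one making $d$ compatible with the left translation $\varphi\mapsto\varphi(a^{-1}\,\cdot\,)$ that $U_a$ implements through $\pi$). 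This establishes the first identity as an equality of the families of operators on the $C^\infty_c(\maG_x,r^*E)$.

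The ``in particular'' statement is then immediate. Choosing an orthonormal basis $\{V_k\}$ of $\g$ with dual basis $\{v_k\}$, applying the identity to each $V_k$, tensoring by $v_k$ and summing,
\[
d_G\bigl(\pi(\varphi)\eta\bigr)=\sum_k\mathscr{L}(V_k)\bigl(\pi(\varphi)\eta\bigr)\otimes v_k=\sum_k\pi\!\Bigl(\tfrac{\partial\varphi}{\partial V_k}\Bigr)\eta\otimes v_k=\pi\!\Bigl(\sum_k\tfrac{\partial\varphi}{\partial V_k}\otimes v_k\Bigr)\eta=\pi(d\varphi)\eta ,
\]
using the definitions of $d_G$ and of the extension of $\pi$ to $C^*G\otimes\g^*$; the same computation carried out over each $\maG_x$ gives the fiberwise form $d_{G,x}[\pi_x(\varphi)]=\pi_x(d\varphi)$.

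I expect the only point genuinely needing care to be the (routine but real) bookkeeping: justifying differentiation under the integral sign and checking that $\pi(\varphi)\eta$ remains smooth and compactly supported — so that $\mathscr{L}(V)$ may legitimately be applied to it — together with keeping the sign convention for $\partial/\partial V$ consistent with that of \cite{Kasparov:KKindex}. Nothing in the argument is conceptually hard once these conventions are pinned down.
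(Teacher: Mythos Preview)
Your proof is correct and follows essentially the same approach as the paper: both compute $U_{e^{tV}}(\pi(\varphi)\eta)$, perform the substitution $h=e^{tV}g$ using left-invariance of the Haar measure, and differentiate at $t=0$ under the integral to obtain $\pi(\partial\varphi/\partial V)\eta$. Your version is slightly more explicit about the analytical justifications (smoothness, compact support, differentiation under the integral) and frames the key step via the representation identity $U_{e^{tV}}\circ U_g=U_{e^{tV}g}$ rather than unpacking $\Phi$, but the argument is the same.
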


\begin{proof}
We  only need to check the first relation with the Lie derivatives. But we have for $V\in \g$, $\varphi\in C^\infty (G)$ and $\eta\in C_c^\infty (\maG, r^*E)$:
\begin{eqnarray*}
\mathscr{L}(V)\pi(\varphi)\eta(\gamma)&=&  \dfrac{d}{dt}{\vert_{t=0}} \; e^{tV}\big(\pi(\varphi)\eta(\Phi(e^{-tV},\gamma))\big)\\
& = & \dfrac{d}{dt}{{\vert_{t=0}}} \;\int_G \varphi(g) (e^{tV}g)\big(\eta(\Phi(g^{-1}, \Phi (e^{-tV},\gamma)))\big) dg\\
&=& \dfrac{d}{dt}{{\vert_{t=0}}} \;\int_G \varphi(g) (e^{tV}g)\big(\eta(\Phi(g^{-1}e^{-tV},\gamma))\big) dg\\
& = & \int_G \dfrac{d}{dt}{\vert_{t=0}} \;\varphi (e^{-tV} h)  h \big(\eta(\Phi(h^{-1},\gamma))\big) dh.
\end{eqnarray*}
In the second to third line we have used the relation $\Phi(g^{-1}, \Phi (e^{-tV},\gamma))= \Phi (g^{-1}e^{-tV}, \gamma)$, and in  the last equality, we have substituted $e^{tV_k}g=h$ and used the $G$-invariance of the Haar measure on $G$. Therefore, we get
$$
\mathscr{L}(V)\pi(\varphi)\eta(\gamma)= \int_G \dfrac{\partial\varphi}{\partial V} (h)   U_h \eta (\gamma) dh = \left[\pi \left(\dfrac{\partial\varphi}{\partial V}\right) \eta\right] (\gamma).
$$
\end{proof}

 Recall that we are given for any $g\in G$ and any $x\in M$ a holonomy class $\theta^g(x)\in \maG_x^{g x}$ with the natural properties recalled in the previous section. So, for any $X\in \g$, we have 
 $$
 \theta^{e^{-tX}} (x) \; \in \; \maG_x^{e^{-tX}x}
 $$
and  $t\mapsto \theta^{e^{-tX}} (x)$ is a smooth path in $\maG_x$ which starts at $x$ viewed in $\maG_x$. Therefore, we defined a vector $\tilde{X}_x\in T_x \maG_x$ and hence in $F_x$ by setting 
$$
\tilde{X}(x)\; :=\; \dfrac{d}{dt}{\vert_{t=0}}\; \theta^{e^{-tX}} (x).
$$
An easy inspection in a local chart allows to see that 
the vector field $\tilde{X}$ coincides with the vector field $X^*_M$. 

%
%

\medskip

\section{The index morphism}\label{Section.Index}

{In this section we define the index class of a $G$-invariant  leafwise $G$-transversally elliptic operator  and we also introduce the $\k$-multiplicity of any unitary irreductible representation in the index class.}

\subsection{Leafwise $G$-transversally elliptic operators}
%

{Recall the bundle map $f':M\times \mathfrak{g}\to F$ and its fiberwise transpose ${f'}^*$.
\begin{defi}
Denote by $F_G^*\subset F^*$ the kernel  of ${f'}^*$. So, $F_G^*$ is the subspace of $F^*$ composed of leafwise covectors which are transverse to the $G$-orbits, or equivalently: 
$$
F_G^* := \{(x,\xi) \in F^*\text{ such that }q_x(\xi)=0\}   .
$$ 
\end{defi}
This definition extends the classical one from \cite{atiyah1974elliptic} where $T_G^*M\subset T^*M$ is defined similarly. }
{We shall use  our Riemannian metric  to identify $TM$ with $T^*M$ and also  $F^*$ with $F$.
With these identifications, $T^*_GM$ can be identified with the subspace $T_GM$ of $T(M)$ which is the orthogonal of the $G$-orbits, and $F^*_G$ can also be identified with
the subspace of $F$ {which is the leafwise orthogonal to the $G$-orbits, so}  $F_G:= F \cap T_GM$. 
}

Recall that any zero-th order longitudinal pseudodifferential operator $P_0$ gives rise to the self-adjoint operator that we have denoted {by $P$ and which is defined following the usual convention, see \cite{Kasparov:KKindex}.  More precisely, in the even case, say when $P_0$ acts from the sections of the hermitian vector bundle $E^+$ to the sections of the hermitian vector bundle $E^-$, we consider   the $\Z_2$-graded Hilbert module $\maE=\maE^+\oplus \maE^-$ associated with the $\Z_2$-grading $E=E^+\oplus E^-$, and the operator $P$ is odd for the grading and given by $P= \begin{pmatrix}
0&P_0^*\\
P_0&0
\end{pmatrix}$. 
In the ungraded case, $E^+=E^-=E$ and $P=P_0$ is assumed to be a selfadjoint operator, say the bounded extension of a  leafwise (formally) selfadjoint operator $P_0 :C^\infty_c(\maG,r^*E)\rightarrow C^\infty_c(\maG,r^*E)$. We shall refer to this convention as convention (K). The notion of $G$-invariant $G$-transversally elliptic operator was introduced and studied in 
 \cite{atiyah1974elliptic}. 
In our case of foliations, we need to assume that the principal symbol of such $G$-invariant longitudinal operator be invertible away from the ``zero section'' of $F_G$. {So by homogeneity, this means that we assume that the restriction of  the principal symbol of our operator to the subspace $S^*_G\maF$ of covectors in $F^*_G$ of length $1$, is pointwise invertible}. 
 {Following classical normalizations  (see \cite{ConnesSkandalis, Kasparov:KKindex}), we  introduce  the following simpler definition:
 \begin{defi}\label{ope:trans:elliptic:naive}
A   zero-th order $G$-invariant  longitudinal pseudodifferential operator $P_0$ acting from the sections of $E^+$ to the sections $E^-$  is a longitudinal $G$-transversally elliptic operator or  a leafwise $G$-transversally elliptic operator, if the symbol of the associated self-adjoint operator $P=\left(\begin{array}{cc} 0 & P_0^*\\ P_0 & 0\end{array}\right)$  on  $E=E^+\oplus E^-$ satisfies  the following condition
\begin{equation}\label{ope:trans:elliptic:naïve}
 \sigma(P)^2=\mathrm{id}\text{ in restriction to } S^*_G\maF.
\end{equation}
\end{defi}
}
{The principal symbol of such  leafwise $G$-transversally elliptic operator $P_0$  {then}  represents a class in the $G$-equivariant Kasparov bivariant group 
$\k\K(\mathbb{C},C_0(F_G))$ and is represented by  the Kasparov even cycle $(C_0(\pi^*E), {\sigma(P)})$, where $C_0(\pi^*E)$ is the space of continuous sections of the continuous bundle $\pi^*E \rightarrow F_G$ which vanish at infinity. Hence,  using the isomorphism $\k\K(\mathbb{C},C_0(F_G))\simeq \K (F_G)$, any $G$-invariant leafwise $G$-transversally elliptic operator $P_0$ has a symbol  class 
$$
[\sigma (P_0)]\; \in \; \K (F_G).
$$
{
\noindent
In the ungraded case, Definition \ref{ope:trans:elliptic:naive} applies to the self-adjoint operator $P=P_0$ and we get an odd Kasparov cycle and hence  a symbol class $[\sigma(P)]\in \k\k_{\mathrm{G}}^1(\C,C_0(F_G))\simeq \k_{\mathrm{G}}^1(F_G).$
}

{
We end this subsection with the following lemma which will be needed in the sequel. Item (2) was used in \cite{Kasparov:KKindex} to define the notion of $G$-transversally elliptic symbols for non classical symbols, and in the more general setting of proper actions. 
\begin{lem}\label{lem:equiv:def} \label{lem:Kasparov:eq:def}
Let $(W, \maF^W)$ be a smooth (not necessary compact) foliated  manifold, and denote as usual by $F^W$ the longitudinal (co)tangent bundle of $(W, \maF^W)$. Let $A$ be an order $0$ longitudinal operator on $W$.  Then the following are equivalent
\begin{itemize}
\item[(1)] {${\sigma(A)}(x, \xi)=0$, $\forall (x, \xi) \in F_G^W\smallsetminus 0=T_GW\cap F^W\smallsetminus 0$.}
\item[(2)] {$\forall \varepsilon >0, \forall \text{compact }K\subset W, \exists c >0,\; \| {\sigma(A)}(x, \xi)\| \leq c \dfrac{q_x(\xi)}{\|\xi\|^2} +\varepsilon$, $\forall x\in K$  and $\forall \xi\in F^{W}_x\smallsetminus 0$. }
\end{itemize} 
\end{lem}

\begin{proof}\
(2) implies (1) because $q_x(\xi)=0\, {\Longleftrightarrow}\, (x, \xi) \in F_G^W$. Let us show that (1) implies (2). {Since ${\sigma(A)}(x, \bullet)$ and $q_x$ are homogeneous, we only need to prove the relation (2) on the sphere bundle $S^*\maF^W$ of $F^W$.  Let $\ep >0$ be given and denote by $A_{\ep, K}$ the subspace of $S^*\maF^W$ of those $(x, \xi)$ such that $\|{\sigma(A)}(x, \xi)\| \geq \ep$ and $x\in K$. Then by (1), the continuous positive function $q$  is bounded below on $A_{\ep, K}$, by compacity of $A_{\ep, K}$. This shows that $\frac{\|{\sigma(A)}(\bullet)\|}{q}$ is bounded on $A_{\ep, K}$. Since $\| {\sigma(A)}(x, \xi)\|<\ep$ on  $S^*\maF^W\smallsetminus A_{\ep, K}$, the proof of (2) is complete.
}
%
\end{proof}
}

\subsection{The index class}
\noindent

We fix a $G$-invariant selfadjoint longitudinal {zero-th order} pseudodifferential operator $Q$ acting on the sections of the vector bundle $E$ and with principal symbol given  for non-zero $\xi$ by  {${\sigma(Q)}(x,\xi)=\frac{q_x(\xi)}{|\xi|^2} \times \id_E$}. This can be achieved by using for instance the usual quantization map, see  \cite{ConnesSkandalis}.

{

\begin{prop}\label{prop:inégalité:preuve:thm} 
Let $A$ be a $G$-invariant selfadjoint {leafwise  pseudodifferential operators of order $0$} acting on the sections of the {hermitian} bundle $E$ over $M$. {Assume that the principal symbol ${\sigma(A)}$ vanishes in restriction to  the subspace $F_G\smallsetminus 0$ of $G$-transverse leafwise  covectors}. Then there exist  two $G$-invariant selfadjoint compact operators $R_1$ and $R_2$ on the Hilbert module $\maE$ such that:

$$
-(c\, Q + \varepsilon +R_1)\leq A\leq c\, Q +\varepsilon +R_2 \text{  as self-adjoint operators on }\maE.
$$
\end{prop}

\begin{proof}\ 
It is a classical result for a single operator even on non compact manifolds but with the proper support that such operators $R_1$ and $R_2$ exist as smoothing properly supported operators, see  \cite{Hormander1971, shubin2001pseudodifferential, Kasparov:KKindex}. Since we shall only need the condition of compactness {of the operators} and since our ambiant manifold is compact here, the proof is immediate. Indeed, by Lemma \ref{lem:equiv:def},
 for any $\ep > 0$, there exists $c>0$ such that the principal symbol of the operators $cQ +\ep \id_{\maE} \pm A$ are non-negative as elements of the $C^*$-algebra $C(S^*\maF, \END (E))$ of continuous sections of the algebra bundle $\END (E)=\pi^*\End (E)$ over the cosphere bundle $S^*\maF$ of the longitudinal bundle $F$.  Now,  a classical result of Connes \cite{Connes:integration:non:commutative, ConnesSkandalis} gives us a $C^*$-algebra short exact sequence obtained out of the closure of the zero-th-order pseudodifferential operators along the leaves of $\maF$:
$$
0 \to \maK_{C^*(M, \maF)} (\maE) \hookrightarrow {\overline{\Psi^0 (M, \maF; E)}} \stackrel{\sigma}{\longrightarrow} C( S^*\maF, \END (E)) \to 0,
$$
where ${\overline{\Psi^0 (M, \maF; E)}}$ is the closure in $\maL_{C^*(M, \maF)} (\maE)$ of the $*$-algebra of zero-th order pseudodifferential operators along the leaves (acting on the sections of $E$) and $\sigma$ is the principal symbol map. Hence, we deduce that the operators $cQ +\ep \id_{\maE} \pm A$ are non-negative up to compact operators and hence the conclusion. 
\end{proof}
}

\begin{remarque}
{Proposition \ref{prop:inégalité:preuve:thm} can be stated for a non compact foliated manifold and for operators with compactly supported symbols in $S^*\maF$. In this case one needs to work with locally compact operators. See Proposition \ref{prop:ineq:non:compact} in Appendix \ref{ShubinLemma} where  the corresponding generalization is stated and proved  using  the exact sequence of locally compact pseudodifferential operators  as obtained in \cite{ConnesSkandalis}.}
\end{remarque}

We are now in  position to state the following important  result. 

{
\begin{thm}\label{index-cycle}\
The triple $\big( \mathcal{E},\pi,P\big)=\left[\maE^+\oplus \maE^-,\pi, \begin{pmatrix}
0&P_0^*\\
P_0&0
\end{pmatrix}\right]$ is an  even Kasparov cycle for the $C^*$-algebras $C^*G$ and $C^*(M,\mathcal{F})$. 
In  the ungraded case, we similarly have an odd Kasparov cycle which is represented by $(\maE,\pi,P_0)$.
\end{thm}
}

\begin{proof}
By Lemma \ref{lem:Ppi=piP}, we know that $[\pi(\varphi), P]=0$ for any $\varphi\in C^*G$. Moreover, $P$ is selfadjoint and odd for the $\Z_2$-grading while $\pi$ obviously respects the $\Z_2$-grading. It thus remains to check that $(\rm{id}-P^2)\circ \pi(\varphi )\in \mathcal{K}_{C^*(M,\mathcal{F})}(\mathcal{E})$, where $\mathcal{K}_{C^*(M,\mathcal{F})}(\mathcal{E})$ stands as usual for the $C^*$-algebra of compact operators in the Hilbert module $\maE$. As $P$ is a leafwise $G$-transversally elliptic operator, the principal symbol of $\rm{id}-P^2$ which coincides with $\rm{id}- {\sigma(P)}^2$, satisfies the assumption of Proposition \ref{prop:inégalité:preuve:thm}. Therefore $\forall \varepsilon>0$, there exist $c_1$, $c_2>0$ and 
compact operators  $R_1$ and $R_2$ on the Hilbert module (in fact leafwise smoothing operators) such that
$$
-(c_1Q+\varepsilon +R_1)\leq 1-P^2 \leq c_2Q+\varepsilon +R_2.
$$ 
Let us take for $\Delta_G$ the operator $d^*_Gd_G$, see Remark \ref{DeltaG}. Denote also by $\Delta$ a  $G$-invariant longitudinal second order differential operator with principal  symbol $\|\xi\|^2\times \id_E$ for $(x,\xi)\in F_x$.  Modulo longitudinal pseudodifferential operators of negative order, the longitudinal pseudodifferential operator $Q$ then coincides with the operator $d_G^*(1+\Delta)^{-1}d_G$.

By Proposition \ref{prop:dG}, we know that $d_G\circ \pi (\varphi ) =\pi(d\varphi )$ and hence this latter is a bounded operator on $\mathcal{E}$. Moreover, $d_G^*(1+\Delta)^{-1}$ has negative order, so by Corollary 3 of \cite{lauter2000pseudodiff}, it is a compact operator of the Hilbert module $\mathcal E$. It follow that $d_G^*(\mathrm{id}+\Delta )^{-1}d_G\pi(\varphi )$ is compact as well. Again since longitudinal pseudodifferential operators of negative order extend to compact operators on the Hilbert module $\maE$, we deduce that $Q\pi(\varphi)$ is compact.
In order to show that the operator $(\mathrm{id}-P^2)\circ \pi(\varphi )$ is compact, we first notice that for $\psi =\varphi^* \varphi$, we have: 
$$-\pi(\varphi )^*\circ (c_1Q+\varepsilon +R_1)\circ \pi(\varphi )\leq \pi(\varphi )^*\circ (\mathrm{id}-P^2)\circ \pi(\varphi ) \leq \pi(\varphi )^*\circ (c_2Q+\varepsilon +R_2)\circ \pi(\varphi ){,}$$
i.e.
$$- (c_1Q+\varepsilon +R_1)\circ \pi(\psi )\leq  (\mathrm{id}-P^2)\circ \pi(\psi ) \leq (c_2Q+\varepsilon +R_2)\circ \pi(\psi ),$$
since all the operators are $G$-invariant. Therefore, projecting in the Calkin algebra and letting $\epsilon\to 0$, we deduce that $(\mathrm{id}-P^2)\circ \pi(\psi )$ is compact for any non-negative $\psi\in C^*G$. Now, using the spectral theorem, we may write any $\varphi \in C^*G$ as a linear combination of non-negative elements and conclude. 
\end{proof}

\begin{defi}
The index class  {$\ind (P_0)$} of the $G$-invariant leafwise $G$-transversally elliptic operator  $P_0$ is defined as:
{
$$
\ind (P_0) \, :=\, [\mathcal{E},\pi,P]\; \in \k\k^i(C^*G,C^*(M,\mathcal{F})),
$$}
with $i\in \Z_2$ {determined} according to Convention (K). 
\end{defi}





{We end this subsection by a short explanation for this choice of the non-equivariant index class $\ind (P_0)$, in opposition to the apparently more interesting equivariant one in $\k\K(C^*G, C^*(M, \maF))$. This latter obviously exists and covers $\ind (P_0)$ under the $G$-forgetful map.  Indeed, the triple $(\maE, \pi, P)$ is $G$-equivariant, 
the representation $\pi$ of $C^*(G)$ in the adjointable operators of $\maE$   is for instance $G$-equivariant when $C^*G$ is endowed with the inner adjoint action. Now, recall that for inner actions the equivariant $\k\k$ groups are isomorphic to the ones corresponding to trivial $G$-actions.
Notice in addition that  $\pi$ allows to recover the action by the unitaries $U_g$, hence  there is no essential lost in concentrating on the non-equivariant class $\ind (P_0)\in \k\k (C^*G, C^*(M,\maF))$. We are grateful to the referee for pointing this out to us.}
%
%

\subsection{The index map}

\begin{prop}\label{G-index}
{The index class  {$\ind(P)$} only depends on the $K$-theory class $[\sigma(P)]$ of the principal symbol $\sigma (P)$, and this induces for $i=0, 1$, a group homomorphism:
{
$$
\ind : \k_{\mathrm{G}}^i(F_G) \longrightarrow \k\k^i(C^*G,C^{*}(M,\mathcal{F})).
$$
}
More precisely, the map  {$[\sigma] \mapsto \ind(P(\sigma))$} is well defined by using any  quantization $P_0$  of $\sigma$.}
\end{prop}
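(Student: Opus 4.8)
The plan is to show that the Kasparov class $[\maE,\pi,P]$ depends only on the $K$-theory class of the symbol $\sigma(P)$, and that the resulting assignment is additive. First I would recall the standard machinery: given a quantization $P_0$ of a symbol $\sigma$ representing a class in $\k_{\mathrm{G}}^i(F_G)\simeq \k\k_{\mathrm{G}}^i(\C,C_0(F_G))$, Theorem \ref{index-cycle} produces a Kasparov cycle $(\maE,\pi,P)$ for $(C^*G,C^*(M,\maF))$, hence a class in $\k\k_{\mathrm{G}}^i(C^*G,C^*(M,\maF))$. So the content is \emph{well-definedness}. There are two independence statements to check: (i) independence of the choice of quantization $P_0$ with fixed principal symbol $\sigma$, and (ii) independence of the representative $\sigma$ within a homotopy class (and, more precisely, invariance under the operations — direct sums, addition of degenerate cycles, compact perturbations — that generate the $K$-theory equivalence relation). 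For (i), if $P_0$ and $P_0'$ are two $G$-invariant quantizations of the same $\sigma$, their difference has negative order, hence (by the same citation \cite{lauter2000pseudodiff, ConnesSkandalis} used in the proof of Theorem \ref{index-cycle}) extends to a compact operator of the Hilbert module $\maE$; thus $tP+(1-t)P'$ is a norm-continuous path of operators each of which, composed with $\pi(\varphi)$, satisfies the compactness condition modulo $\maK_{C^*(M,\maF)}(\maE)$ — and the commutator with $\pi(\varphi)$ stays zero since both operators are $G$-invariant. This is an operator homotopy of Kasparov cycles, so the class is unchanged.

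Next I would handle (ii). The cleanest route is to observe that the assignment $\sigma\mapsto (\maE,\pi,P(\sigma))$ is, up to the above compact-perturbation flexibility, \emph{natural in the symbol}: a homotopy of symbols $\sigma_t$ (a symbol over $F_G\times[0,1]$, or equivalently a $\ast$-homomorphism into $C_0(F_G)[0,1]$) can be quantized to a norm-continuous family $P_t$, giving a Kasparov cycle over $C^*(M,\maF)[0,1]$ whose endpoint evaluations are the two cycles in question; evaluation at $0$ and $1$ being homotopic maps on $K$-theory, the two index classes agree. Degenerate symbols — those that are already invertible \emph{on all of $F$}, i.e. leafwise elliptic — quantize to operators $P$ with $\mathrm{id}-P^2$ of negative order, hence $(\mathrm{id}-P^2)\pi(\varphi)$ compact \emph{without} needing the $G$-transversal estimate; moreover such a genuinely elliptic invertible symbol gives, after a compact perturbation, a cycle with $P^2=\mathrm{id}$ and $P$ commuting with $\pi$, which is a degenerate Kasparov cycle and hence zero in $\k\k_{\mathrm{G}}$. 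Direct sums of symbols quantize to direct sums of operators, which represent the sum of the classes in $\k\k_{\mathrm{G}}$; combined with the degenerate-cycle vanishing, this shows the map descends to the Grothendieck group $\k_{\mathrm{G}}^i(F_G)$ and is additive.

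Finally I would assemble these: define $\indm([\sigma]):=[\maE,\pi,P(\sigma)]$ using any quantization $P(\sigma)$; step (i) makes this independent of the quantization, and step (ii) makes it well defined on $K$-theory classes and a homomorphism. The same argument works verbatim in the ungraded ($i=1$) case, replacing even cycles by odd ones. The main obstacle I anticipate is technical rather than conceptual: making precise that a homotopy of \emph{symbols} satisfying the uniform estimate of Definition \ref{ope:trans:elliptic:technique} can be quantized to a norm-continuous family of $G$-invariant leafwise pseudodifferential operators each satisfying that same estimate uniformly in the parameter — so that Proposition \ref{prop:inégalité:preuve:thm} applies uniformly and $(\mathrm{id}-P_t^2)\pi(\varphi)$ is compact for every $t$ and varies continuously. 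This is where one must be a little careful: one should either choose the quantization map to depend continuously (indeed smoothly) on the symbol parameter, or argue via Lemma \ref{lem:Kasparov:eq:def} applied to the foliated manifold $W=M\times[0,1]$ with the product foliation $\maF^W=\maF\times 0$, so that a parametrized symbol over $F_G^W$ is handled by a single application of the equivalence there. Everything else — additivity, vanishing of degenerate cycles, compact-perturbation invariance — is routine Kasparov bookkeeping once that point is in place.
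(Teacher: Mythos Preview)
Your approach is essentially the same as the paper's: both follow the Atiyah--Singer template (homotopy of symbols $\Rightarrow$ operator homotopy; direct sums $\Rightarrow$ additivity; trivial symbols $\Rightarrow$ zero class), and your explicit step (i) on independence of quantization, together with the flag about uniform quantization of a symbol homotopy, are welcome refinements the paper leaves implicit.

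There is one genuine slip. You write that the ``degenerate symbols'' to be killed are those ``invertible on all of $F$, i.e.\ leafwise elliptic,'' and that such a $P$ can, after a compact perturbation, be made to satisfy $P^2=\mathrm{id}$. This is false: a leafwise elliptic operator generally has nonzero Connes--Skandalis index in $\k\k(\C,C^*(M,\maF))$, so its class in $\k\k_{\mathrm{G}}(C^*G,C^*(M,\maF))$ is nonzero as well, and no compact perturbation will produce $P^2=\mathrm{id}$. The correct class to quotient by (and this is exactly what the paper uses, via the semigroup ${\mathcal C}_\phi(F_G)$) consists of symbols whose restriction to the sphere bundle of $F_G$ is \emph{induced by a bundle isomorphism over $M$}, i.e.\ is constant in $\xi$. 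For such a $\sigma$ one may take $P_0$ to be multiplication by the bundle isomorphism itself, so that $P_0^*P_0=\mathrm{id}$ and $P_0P_0^*=\mathrm{id}$ exactly, giving a genuinely degenerate Kasparov cycle. Once you replace ``leafwise elliptic'' with ``induced by a bundle isomorphism over $M$,'' your argument goes through and coincides with the paper's.
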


\begin{proof}\
This is classical and we follow \cite{Atiyah-Singer:I} and \cite{atiyah1974elliptic}. We only give the graded case, the ungraded being similar and easier. Let ${\mathcal C}(F_G)$ be the semigroup of $0$-homogeneous homotopy classes of transversally elliptic symbols of order $0$ and let ${\mathcal C}_{\phi}(F_G)\subset {\mathcal C}(F_G)$ be the  classes of such symbols whose restriction to the sphere bundle of $F_G$, is induced by a bundle isomorphism over $M$. By a standard argument, we know that $\K(F_G):={\mathcal C} (F_G)/{\mathcal C}_{\phi}(F_G)$. Let now $\sigma_t$ be a homotopy of leafwise $0$-th order $G$-transversally elliptic symbols, then the quantization of this homotopy gives an operator homotopy and hence  by the very definition of the Kasparov group  {$\k\k(C^*G,C^{*}(M,\mathcal{F}))$}, the index classes of $\sigma_0$ and $\sigma_1$ coincide  in  {$\k\k(C^*G,C^{*}(M,\mathcal{F}))$}. On the other hand, given two $0$-th order $G$-invariant leafwise $G$-transversally elliptic operators $P : C^{\infty ,0}(\mathcal{G},r^*E) \rightarrow C^{\infty ,0}(\mathcal{G},r^*E)$ and $P': C^{\infty ,0}(\mathcal{G},r^*E') \rightarrow C^{\infty ,0}(\mathcal{G},r^*E')$, we obviously have 
$$
\ind (P\oplus P' )=[(\mathcal{E}\oplus \mathcal{E}', \pi_{\mathcal{E}}\oplus \pi_{\mathcal{E}'} ,P\oplus P')]=[(\mathcal{E}, \pi_{\mathcal{E}} ,P)]+[(\mathcal{E}',\pi_{\mathcal{E}'}, P')]=\ind (P)+\ind (P').
$$
Finally, it is clear that  any zero-th order $G$-invariant longitudinal pseudodifferential operator whose symbol  is induced by a bundle isomorphism over $M$ has $\ind(P)=0$, for more details  see for instance \cite{Atiyah-Singer:IV}.
\end{proof}
{We shall also use in the sequel  the classical isomorphism  $\K(F_G)\simeq ^k{\mathcal C} (F_G)/^k{\mathcal C}_{\phi}(F_G)$ for any $k\in \Z$. Here $^k{\mathcal C}(F_G)$ and $^k{\mathcal C}_{\phi}(F_G)$ are the same semi-groups introduced in the previous proof but replacing $0$-homogeneous by $k$-homogeneous. See for instance \cite{Atiyah-Singer:I}.}
%
%

\begin{remarque}
{\begin{itemize}
\item Since the commutator $[P, f]$ is compact for $f \in C(M)$, the triple $(\maE, \pi , P)$ extends to an
element in $KK(C(M) \rtimes  G  , C^*(M,\maF))$. Therefore, the index morphism ${\Ind}^{\mathcal{F}}$ factors through $\Ind^{M, \maF}: \k^i_{\mathrm{G}} (F_{G}) \to \k\k^i (C(M)\rtimes G, C^*(M, \maF))$. 
\item {When the $G$-action is locally free so as to generate a smooth regular subfoliation $\maF'$ of the foliation $\maF$, the index morphism $\Ind^{M, \maF}$ can be recast as valued in $\k\k (C^*(M, \maF'), C^*(M, \maF))$ and} we recover in this case the index construction given for more general double foliations  in \cite{hilsum1987morphismes}.  
\end{itemize}}
\end{remarque}

We may state the similar proposition when an extra compact  group acts on the whole data. More precisely, we have:

\begin{prop}\label{HequivariantIndexClass}
Assume that the compact group $G_1$ acts as before by holonomy diffeomorphisms, and that an extra compact group $G_2$ acts also on $M$ by $F$-preserving isometries  (not necessarily preserving the leaves) such that this $G_2$-action commutes with the action of $G_1$, then the previous construction yields, for $G_1\times G_2$-invariant $G_1$-transversally elliptic operators along the leaves of $(M, \maF)$, to a well defined  $G_2$-equivariant index map
{$$
\mathrm{Ind}^{\mathcal{F},G_2} : \k^i_{\mathrm{G_1\times G_2}} (F_{G_1}) \longrightarrow \k\k^i_{\mathrm{ G_2}}(C^*G_1,C^*(M,\mathcal{F})), \quad i\in \Z_2.
$$}
\end{prop}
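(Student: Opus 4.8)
The plan is to follow the same line of argument as in the proof of Theorem \ref{index-cycle} and Proposition \ref{G-index}, keeping track of the extra symmetry group $G_2$ throughout. First I would verify that the whole construction of the Hilbert module $\maE$, the representation $\pi$ of $C^*G_1$, and the operator $P$ associated with a quantization of the symbol can be made $G_1\times G_2$-equivariant. The key point is that since $G_2$ acts by $F$-preserving isometries commuting with $G_1$, it acts on the holonomy groupoid $\maG$ by groupoid automorphisms, preserves the Haar system $\nu$ (after averaging the leafwise Lebesgue class measure over $G_2$ as well, which we may assume done), and hence yields a unitary family $W=(W_{h})_{h\in G_2}$ on $\maE$ exactly as in Lemma \ref{lem:unitary:U}; this makes $\maE$ a $G_1\times G_2$-Hilbert module once $C^*(M,\maF)$ is endowed with the (now possibly nontrivial) $G_2$-action and the trivial $G_1$-action. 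Because the $G_2$-action commutes with the $G_1$-action, the averaging formula \eqref{remarque:piGequi} defining $\pi$ is $G_2$-equivariant: conjugation by $W_h$ sends $\pi(\varphi)$ to $\pi(\varphi)$ (the $G_2$-action on $C^*G_1$ being trivial, since it commutes with the $G_1$-translations used to build $C^*G_1$). Thus $\pi$ is $(G_1\times G_2)$-equivariant with $C^*G_1$ carrying the adjoint $G_1$-action and trivial $G_2$-action.

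Next I would check that a $G_1\times G_2$-invariant quantization of a $G_1\times G_2$-invariant leafwise $G_1$-transversally elliptic symbol exists: one simply averages an arbitrary quantization over the compact group $G_1\times G_2$, which preserves both the order and the principal symbol. The commutation $[\pi(\varphi),P]=0$ for $\varphi\in C^*G_1$ follows verbatim from Lemma \ref{lem:Ppi=piP}, using only $G_1$-invariance of $P$. The compactness of $(\mathrm{id}-P^2)\circ\pi(\varphi)$ is proved exactly as in Theorem \ref{index-cycle}: the leafwise $G_1$-transversal ellipticity gives the symbol estimate of Proposition \ref{prop:inégalité:preuve:thm}, one introduces $\Delta_{G_1}=d_{G_1}^*d_{G_1}$ and the operator $Q$ with symbol $\tfrac{1+q_x(\xi)}{1+|\xi|^2}$, and Proposition \ref{prop:dG} together with the compactness of negative-order leafwise operators (Connes' exact sequence, as in the proof of Proposition \ref{prop:inégalité:preuve:thm}) shows $Q\pi(\varphi)$ is compact, whence $(\mathrm{id}-P^2)\pi(\psi)$ is compact for non-negative $\psi$, and then for all $\varphi\in C^*G_1$ by the spectral theorem. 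All the operators appearing are $G_2$-invariant by construction, so the resulting Kasparov cycle $(\maE,\pi,P)$ is $G_1\times G_2$-equivariant and defines a class in $\k\k^i_{G_1\times G_2}(C^*G_1,C^*(M,\maF))$.

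Finally, well-definedness of the index map, i.e. dependence only on $[\sigma(P)]\in\k^i_{G_1\times G_2}(F_{G_1})$, is obtained as in Proposition \ref{G-index}: a $G_1\times G_2$-equivariant homotopy of transversally elliptic symbols quantizes (equivariantly, by averaging) to an operator homotopy, hence to an equality of classes in the Kasparov group; additivity under direct sums is immediate; and symbols induced by $G_1\times G_2$-equivariant bundle isomorphisms over $M$ give the zero class. One also needs the $G_1\times G_2$-equivariant analogue of Remark \ref{K-Symbols} identifying $\k_{G_1\times G_2}(F_{G_1})$ with homotopy classes of equivariant transversally elliptic symbols modulo those trivialized on the sphere bundle; this is the standard argument of \cite{Atiyah-Singer:I} with an extra compact group, which is harmless since $M$ is compact.

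The main obstacle, and the only place requiring genuine care rather than bookkeeping, is ensuring that all the auxiliary choices entering the cycle — the $G_1$-invariant metrics on $M$, on $E$, and on $\g_{1,M}$, the leafwise measure defining $\nu$, the operator $Q$, the operator $\Delta_{G_1}$, and the quantization $P_0$ — can be taken simultaneously invariant under the larger compact group $G_1\times G_2$; this is where one uses that $G_2$ acts by $F$-preserving isometries commuting with $G_1$, so that averaging over $G_1\times G_2$ does not destroy any of the structures (in particular $G_2$ preserving $F$ is exactly what lets $G_2$ act on the leafwise bundles and on $F_{G_1}$, so that $\k^i_{G_1\times G_2}(F_{G_1})$ makes sense and the symbol class is $G_1\times G_2$-equivariant). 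Once all data are $G_1\times G_2$-invariant, every estimate and every compactness argument from the previous sections applies unchanged, the extra group acting merely as a passenger.
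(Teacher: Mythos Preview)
Your proposal is correct and follows essentially the same approach as the paper, which in fact gives no formal proof but only a brief paragraph indicating that the cycle $(\maE,\pi,P)$ is automatically $G_2$-equivariant for the usual $G_2$-action induced on $\maG$ and on the bundle. One small imprecision: you say the $G_2$-unitaries $W_h$ arise ``exactly as in Lemma~\ref{lem:unitary:U}'', but that lemma relies on the holonomy map $\theta$ (available only for $G_1$) to make the action compatible with the \emph{trivial} action on $C^*(M,\maF)$; for $G_2$ one uses instead the standard action $(W_h\eta)(\gamma)=h\,\eta(h^{-1}\gamma)$, which is why $C^*(M,\maF)$ carries a genuinely nontrivial $G_2$-action in the target Kasparov group --- a point you do recognize elsewhere in your sketch.
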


The $G_2$-equivariant index class of the $G_1\times G_2$-invariant leafwise pseudodifferential operator $P_0$ on $(M, \maF)$ which is $G_1$-transversally elliptic is represented again by the cycle $(\maE, \pi, P)$ which is now in addition $G_2$-equivariant. Indeed, the  Hilbert module $\maE$ is automatically endowed with the extra $G_2$-action so that $\maE$ is a $G_2$-equivariant Hilbert module over the $G_2$-algebra $C^*(M, \maF)$. Here of course the actions are the usual ones induced from the action on the holonomy groupoid $\maG$ and on the bundle and no need to assume that the action of $G_2$ preserves the leaves. When the group $G_1$ is for instance the trivial group, then we recover the $G_2$-equivariant index class for $G_2$-invariant leafwise elliptic operators as considered for instance in \cite{Benameur:LongLefschetzKtheorie}.

%

{\begin{remarque}\label{Remark:KK_G:equiv}
If we assume in the previous proposition that the extra group $G_2$ also acts by holonomy diffeomorphisms on $(M, \maF)$, then exactly as for the $G_1$-action, we can arrange the $G_2$-action on $\maE$ so that it becomes a $G_2$-equivariant Hilbert module over the {$G_2$-trivial $C^*$-algebra $C^*(M, \maF)$}. Hence in this case,  there are again two ways to define the equivariant index class {but they are isomorphic.}

\end{remarque}}

\begin{remarque}
{We shall see in Subsection \ref{Section:Excision} that the index morphism is also well defined when $M$ is not necessarily compact as a morphism on the (compactly supported) equivariant $K$-theory of the space $F_G$.} 
\end{remarque}

\subsection{The $\k$-theory multiplicity of a representation}
\noindent
For any irreducible unitary representation of $G$, we now proceed to define a class  in $\k_i (C^{*}(M,\mathcal{F})) $ playing the role of its  multiplicity   in the index class $\ind(P)$, and which coincides with  the usual multiplicity as defined by Atiyah in \cite{atiyah1974elliptic} in the case of a single operator, corresponding for us to the maximal  foliation with a single leaf.

So let  $\rho: G\to U(X)$ be an  (irreductible) unitary representation  of $G$ in the finite dimensional space $X$. For simplicity, we shall refer to such representation by $X$ when no confusion can occur. Recall that the space of isomorphism classes of irreducible unitary representations of $G$ is the discrete dual $\widehat{G}$ of $G$, hence we have fixed $X\in \widehat{G}$. 

{
Recall that the representation $X$  corresponds to a projection   which represents   a  $\k$-theory class in $K_0(C^*G)$ (see \cite{julg1981produit:croise}),  equivalently $X$ corresponds to a class   $[X]\in \k\k(\mathbb{C}, C^*G)$ given by $[(X, 0)]$.  More precisely, denote  as usual  by $C_{v, w}$  the coefficient of the representation $X$ corresponding to the vectors $v, w\in X$, i.e. the map $g\mapsto <v, \rho (g) w>$. Then the Schur Lemma gives  the relations 
$$
C_{v_1,w_1} \ast C_{v_2,w_2}=\frac{1}{\dim X} \langle w_1,v_2 \rangle C_{v_1,w_2}\quad \mbox{and}\quad C_{v_1,v_2}^*=C_{v_2,v_1}, \qquad v_1,v_2,w_1,w_2 \in X,
$$ 
which in turn imply that for any $v\in X$ such that $\|v\|=1$, the element $p_v=(\dim X) C_{v,v}$  is a minimal projection in $C^*G$, and the map $\phi_v : X \to p_vC^*G$ given by $\phi_v(w)= \sqrt{\dim X} C_{v,w}$ is an isomorphism of Hilbert $C^*G$-modules.  
Therefore, $[X]=[p_vC^*G, 0]$ independently of the unital vector $v$.

Using our representation $\pi$, we thus have a well defined Kasparov cycle 
$(p_v \mathcal{E}, P_{p_v})$ with $P_{p_v}$ being the restriction of $P$ to $p_v \maE=\pi (p_v)\maE$,  which represents by definition the class of the Kasparov product $[X]\underset{C^*G}{\otimes} \ind(P_0)$. 
} 

{\begin{defi}\label{DefMultiplicite}
The $\k$-multiplicity $m_{P_0}(X)$ of the irreducible unitary representation $\rho: G\to U(X)$  in the index class  $\ind (P_0)$ is  the image of the above class 
$[p_v\mathcal{E}, P_{p_v})]\in \k\k^i(\mathbb{C},C^{*}(M,\mathcal{F}))$ under the  isomorphism $\k\k^i(\mathbb{C},C^{*}(M,\mathcal{F}))\stackrel{\cong} {\longrightarrow}\k_ i (C^{*}(M,\mathcal{F}))$.  Hence  we end up with the well defined $K$-multiplicity map:
$$
m_{P_0}: \widehat{G} \longrightarrow \k_i(C^*(M, \maF))\text{ which assigns to }X\text{ the multiplicity }m_{P_0}(X) \in \k_i(C^*(M, \maF)).
$$
\end{defi}
}

{Let us compare our definition of the $K$-multiplicity with Atiyah's definition. 
Recall the Hilbert module structure of $X\otimes \maE$ over $C^*(M, \maF)$:
$$
(v\otimes \xi) a := v\otimes \xi a \text{ and } \langle v\otimes \xi, v'\otimes \xi'\rangle := <v, v'> \langle \xi, \xi'\rangle, \quad \xi, \xi'\in \maE, v, v'\in X\text{ and } a\in C^*(M, \maF).
$$
Denote by  $\mathcal{E}^G_X$  the Hilbert $C^*(M, \maF)$-submodule of $X\otimes \mathcal{E}$ composed of the $G$-invariant elements for the action of $G$ given by $\rho\otimes U$ where $U$  has been introduced in Section \ref{C(B):module} using that the action is by holonomy diffeomorphims, i.e.  with the previous notations, 
$$
\mathcal{E}^G_X=(X\otimes \mathcal{E})^G := \{\xi \in X\otimes \maE \text{ such that } (\rho(g)\otimes U_g) \xi = \xi, \forall g\in G\}.
$$
}

{
\begin{lem}\label{TechnicIsom}
The following standard formula holds:
$$
\left\langle \sum v_i\otimes \eta_i, \sum w_j\otimes \xi_j \right\rangle = \sum \langle \eta_i,\pi(C_{v_i,w_j})\xi_j\rangle,\quad \text{for } \sum v_i\otimes\eta_i\in X\otimes \maE\text{ and } \sum w_j\otimes\xi_j\in \maE_X^{G}.
$$
\end{lem}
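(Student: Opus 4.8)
The plan is to verify the identity by a direct computation, reducing to the case of a single elementary tensor on each side by bilinearity. First I would fix $v \otimes \eta \in X \otimes \maE$ and $w \otimes \xi \in \maE_X^G$, and expand the left-hand side using the definition of the inner product on $X \otimes \maE$ recalled just before the lemma, namely $\langle v \otimes \eta, w \otimes \xi\rangle = \langle v, w\rangle_X \, \langle \eta, \xi\rangle_{\maE}$. The crucial point is that $w \otimes \xi$ is a $G$-invariant element, so $w \otimes \xi = (\rho(g) \otimes U_g)(w \otimes \xi)$ for all $g \in G$; averaging over $G$ with respect to the normalised Haar measure $dg$ gives $w \otimes \xi = \int_G (\rho(g) w) \otimes (U_g \xi)\, dg$. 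Substituting this into the left-hand side and using linearity of the $\maE$-inner product in the second variable together with the fact that the scalar $\langle v, \rho(g) w\rangle_X = C_{v,w}(g)$, I would obtain
$$
\langle v \otimes \eta, w \otimes \xi\rangle = \int_G \langle v, \rho(g) w\rangle_X \, \langle \eta, U_g \xi\rangle\, dg = \int_G C_{v,w}(g)\, \langle \eta, U_g \xi\rangle\, dg.
$$

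Next I would identify the right-hand side of this last display with $\langle \eta, \pi(C_{v,w}) \xi\rangle$. By the definition \eqref{remarque:piGequi} of $\pi$, for $\varphi \in C(G)$ one has $\pi(\varphi)\xi = \int_G \varphi(g)\, (U_g \xi)\, dg$, so by continuity and linearity of the $\maE$-valued inner product in its second argument,
$$
\langle \eta, \pi(C_{v,w}) \xi\rangle = \left\langle \eta, \int_G C_{v,w}(g)\, U_g \xi\, dg\right\rangle = \int_G C_{v,w}(g)\, \langle \eta, U_g \xi\rangle\, dg,
$$
which is exactly the expression obtained above. Here I would note that pulling the $C^*(M,\maF)$-valued integral out of the inner product is legitimate because the integrand is continuous and compactly supported (or a norm limit of such), so the Bochner-type integral commutes with the bounded linear map $\langle \eta, -\rangle$. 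Summing over the indices $i$ and $j$ by sesquilinearity then yields the stated formula.

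I do not expect a serious obstacle here; the lemma is essentially a bookkeeping identity. The only point requiring a little care is the interchange of the $G$-integral defining $\pi(C_{v,w})$ (respectively the averaging projection onto $\maE_X^G$) with the Hilbert-module inner product, which is justified by the standard fact that the inner product $\maE \times \maE \to C^*(M,\maF)$ is norm-continuous and hence commutes with vector-valued integration; one may also simply check the identity first on the dense subspace $C_c(\maG, r^*E) \subset \maE$ where all integrals are over compact sets of continuous functions, and then pass to the limit. It is also worth recording that the identity is stated asymmetrically — the second argument is required to lie in $\maE_X^G$ — and indeed $G$-invariance of $\sum w_j \otimes \xi_j$ is used in an essential way to replace $w_j$ by its average against $\rho(g)$; no invariance of the first argument is needed.
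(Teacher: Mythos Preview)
Your proposal is correct and follows essentially the same approach as the paper: both proofs use the $G$-invariance of the second argument to rewrite it as $\int_G (\rho(g)w_j)\otimes (U_g\xi_j)\,dg$, then expand the inner product, identify $\langle v_i,\rho(g)w_j\rangle = C_{v_i,w_j}(g)$, and recognise the resulting integral as $\langle \eta_i,\pi(C_{v_i,w_j})\xi_j\rangle$. The only cosmetic difference is that the paper carries out the computation pointwise on $\maG$ (evaluating both sides at $\gamma$), whereas you work directly at the Hilbert-module level.
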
} 

\begin{proof}\ 
{By linearity, we forget the sums. We} have for any $\gamma\in \maG$, $\eta,\xi \in C_c^\infty(\maG,r^*E)$ and since $\int_G \rho(g) (w)\otimes U_g(\xi) dg=  w\otimes \xi$: 
\begin{eqnarray*}
\left\langle v\otimes\eta, w\otimes\xi\right\rangle (\gamma) & = & \int_{\maG_{r(\gamma)}}  \left\langle  (v\otimes \eta)(\gamma_1),  (w\otimes \xi)(\gamma_1\gamma)\right\rangle d\nu_{r(\gamma)}(\gamma_1)\\
& = & \int_{\maG_{r(\gamma)}} \left\langle (v\otimes \eta)(\gamma_1), \int_G (\rho(g) (w)\otimes U_g\xi) (\gamma_1\gamma) dg\right\rangle d\nu_{r(\gamma)}(\gamma_1)\\
& = & \int_{\maG_{r(\gamma)}} \int_G   \langle v, \rho (g)w\rangle \left\langle  \eta(\gamma_1),  (U_g\xi)(\gamma_1\gamma) \right\rangle  dg d\nu_{r(\gamma)}(\gamma_1)\\
& = & \int_{\maG_{r(\gamma)}}    \left\langle  \eta(\gamma_1),   \int_G C_{v, w} (g) (U_g\xi)(\gamma_1\gamma) dg \right\rangle  d\nu_{r(\gamma)}(\gamma_1)\\
& = & \langle \eta,\pi(C_{v,w})\xi\rangle(\gamma).
\end{eqnarray*}
\end{proof}
}

%

By restricting the operator $\text{id}_X \otimes P \in \maL_{C^*(M, \maF)} (X\otimes \maE)$ to the $G$-invariant elements, we get the adjointable  operator $P^G_X \in \mathcal{L}_{C^*(M,\mathcal{F})}(\mathcal{E}_X^G)$, i.e. $
P^G_X:=(\text{id}_X \otimes P)\vert_{\mathcal{E}^G_X}.$

{
\begin{prop}
For any (irreducible)  unitary representation $\rho:G\to U(X)$, $\big( \mathcal{E}^G_X,P^G_X\big)$ is a Kasparov cycle whose class in $\k\k^i (\mathbb{C},C^*(M,\mathcal{F}))$ coincides with the class of the cycle $(p_v \mathcal{E}, P_{p_v})$ for any unital vector $v\in X$. In particular, the image of $\left[ \mathcal{E}^G_X,P^G_X\right]$ under the isomorphism $\k\k^i(\mathbb{C},C^{*}(M,\mathcal{F}))\stackrel{\cong} {\longrightarrow}\k_ i (C^{*}(M,\mathcal{F}))$ coincides with the $K$-multiplicity $m_{P_0} (X)$.
\end{prop}}

{\begin{remarque}
Our definition of the $K$-multiplicity is therefore the exact generalization of Atiyah's definition, and the multiplicity map $m_{P_0}$ defined in \ref{DefMultiplicite} is just the corresponding inteprepretation of the index class $\ind (P_0)$,  in the spirit of the series decomposition used for the distributional index in \cite{atiyah1974elliptic}.
\end{remarque}}

\begin{proof}\
{We have for any unital vector $v\in X$, a unitary equivalence between $(p_v C^*G \underset{C^*G}{\otimes} \maE, \Id \underset{C^*G}{\otimes} P)$ and $(X \underset{C^*G}{\otimes} \maE, \Id \underset{C^*G}{\otimes} P)$ given by $\phi_v \otimes \Id : X \underset{C^*G}{\otimes} \maE \rightarrow p_vC^*G \underset{C^*G}{\otimes} \maE $.
%
On the other hand, we have an isomorphism
$$
\Av \, : \,X\underset{C^*G}{\otimes}\mathcal{E} \longrightarrow \maE_X^G=(X\otimes \mathcal{E})^G
\text{ given by }  \int_{G} (\rho(g) \otimes U_g) (\bullet) \; dg.
$$
We compute for  $w\in X$ and $\eta\in \maE$:
\begin{multline*}
\int_G (\rho(g) \otimes U_g) (w\cdot \varphi \otimes \eta) dg = \int_G (\rho(g) \otimes U_g)(w\otimes \pi(\varphi)\eta) \; dg \\ \text{ and } \int_G (\rho(g) \otimes U_g) (w\otimes \eta)dg=\sum \limits_k e_k  \otimes \pi(C_{e_k,w})\eta
\end{multline*}
where $(e_k)_k$ is  a given orthonormal basis of $X$. The first relation shows that the map $\Av$ is well defined and we observe that the range of $\Av$ is exactly equal to $\maE_X^G$.
Using the second relation, Lemma \ref{TechnicIsom}, and the standard fact that for any $(w_1, w_2)\in X^2$ we have $
\sum_k C_{w_1, e_k} C_{e_k, w_2} = C_{w_1, w_2}$,  in $C^*G$,
we see that $\Av$ is indeed a unitary between the Hilbert $C^*(M,\mathcal{F})$-modules  $X\underset{C^*G}{\otimes}\mathcal{E}$ and $\maE_X^G$. That $\big( \mathcal{E}^G_X,P^G_X\big)$ is a Kasparov cycle is now a consequence of the above unitary equivalence.}
\end{proof}

{\begin{remarque}
If $(M, \maF)$ admits a holonomy invariant Borel transverse measure $\Lambda$, then applying the associated additive map $K_0(C^*(M, \maF))\to \R$, we get a well  defined $\Lambda$-multiplicity morphism, in the graded case, for the $G$-transversely elliptic operator $P_0$:
$$
m_{P_0}^\Lambda:  \widehat{G} \longrightarrow \R,
$$ 
in the spirit of the Murray-von Neumann dimension theory. 
\end{remarque}}

\section{The Atiyah axioms for our index morphism}\label{axioms}

\noindent As before, we denote by $F_G$ the closed subspace of $F$ defined by $F\cap T_GM$. 

\subsection{The index for free actions}\label{Section:FreeActions}
In this subsection, we let $G$ and $H$ be both compact {\underline{Lie}} groups. Let $M$ be a smooth compact manifold and let $\mathcal{F}$ be a given smooth foliation of $M$.  We suppose that the compact group $G\times H$ acts on $M$ by leaf-preserving diffeomorphisms that we may assume to be isometries of the ambiant manifold $M$, by averaging the metric. We further assume that  $H$ acts {\underline{freely}} on $M$ so that the projection $q: M \rightarrow M/H$ corresponds to a $G$-equivariant principal $H$-fibration which sends leaves to leaves. So, we insist that we assume here that $H$ {\underline{preserves the leaves upstairs}} and induces the corresponding leaves downstairs, this is automatic when $H$ is connected. Notice  that the leaf of $(M, \maF)$ through a given point $m\in M$ coincides here with the inverse image of the leaf through $q(m)$ in the quotient manifold $M/H$. The induced foliation downstairs in $M/H$ will be denoted $\maF/H$ in the sequel. We denote again by $\pi : F \rightarrow M$ the vector bundle projection and by $\overline{\pi} : F/H\rightarrow M/H$ the induced vector bundle projection downstairs. The foliations $(M, \maF)$ and $(M/H, \maF/H)$ then have the same codimension and under our assumptions do actually have the same space of leaves as we explain below. 
The action of $H$ on $F$ then preserves the subspace $F_G$ and  we have an isomorphism
$$
q^* : \k_{\mathrm{G}}^i((F/H)_G) \rightarrow \k_{\mathrm{G\times H}}^i(F_{G\times H}).
$$ 
To be specific, this isomorphism identifies the classes of the $G$-invariant $G$-transversally elliptic  $\maF/H$-leafwise symbols over $M/H$, with those of the symbols of $G\times H$-invariant $G\times H$-transversally elliptic $\maF$-leafwise symbols over $M$. At the level of cycles, $q^*$ associates with  $(E, a)$ the cycle $(q^*E, q^*a)$ with  $q^*a (m, \xi) = a (q(m), q_*\xi)$, identifying again covectors with vectors.
{Said differently,  the space $F_{G\times H}$ is just the fibered product
$(F/H)_G \times_{ M/H} M$ and using the proper $G$-equivariant map
$\tilde{q} : F_{G\times H} \to (F/H)_G$, we can see that the map $q^*$ is the functoriality map $\tilde{q}^*$.
}

Let $\hat{H}$ be the set of isomorphism classes of irreducible unitary representations of the compact group $H$. We shall sometimes refer to an element $\alpha: H\to \End (W_\alpha)$ of $\hat{H}$ simply as $\alpha$, and the corresponding character on $H$ will be denoted $\chi_\alpha$. Associated with such representation we have the homogeneous bundle $\underline{W_\alpha}\to M/H$ associated with the principal $H$-bundle $q:M\to M/H$.  We thus have the classical  map:
$$
\begin{array}{clc}
R(H\times G) &\longrightarrow &\K(M/H)\\
V &\longmapsto &\underline{V}^* 
\end{array}
$$
where $V^*$ is the dual representation.

Using a distinguished open cover for the foliated manifold  $(M/H, \maF/H)$ which trivializes the principal fibration $q:M\to M/H$ as well, it is easy to see that the foliations $(M, \maF)$ and $(M/H, \maF/H)$ have Morita equivalent $C^*$-algebras. 
If we  denote  by $\maG (M/H, \maF/H)$ the holonomy groupoid of the foliation $(M/H, \maF/H)$, then this Morita equivalence is implemented by the Hilbert module associated with the graph space 
$$
G_q := \{ (m, \alpha)\in M\times \maG (M/H, \maF/H)\, \vert\, q(m) = r(\alpha)\}.
$$
This is the graph of the morphism of groupoids induced by the projection $q : M \rightarrow M/H$. The action of $\mathcal{G}$ on $G_q$ is given by $\gamma \cdot (m,\alpha)=(\gamma m, q(\gamma )\alpha )$ and we leave it as an exercise for the interested reader to show that we get in this way a principal $\maG$-bundle in the sense of \cite{renault2006groupoid} and that this bundle indeed embodies the Morita equivalence. 
As a consequence, we can define the imprimitivity Hilbert bimodule which realizes the Morita equivalence between the corresponding $C^*$-algebras as the completion of the pre-Hilbert module $C_c(G_q)$. 

There is a left prehilbert $C_c(\maG)$-module structure  on $C_c(G_q)$ given by 
$$
f\cdot \varphi (m, \alpha)=\int_{\maG^{m}} f(\gamma ) \varphi (s(\gamma), q(\gamma)^{-1}\alpha) \, d\nu^{m}(\gamma)
$$
and 
$$
_\mathcal{\maG}\langle \varphi,\psi \rangle (\gamma ) =\int_{\maG(M/H, \maF/H)^{qr(\gamma)}} \varphi (s(\gamma), q(\gamma^{-1}) \beta )\overline{\psi(r(\gamma),  \beta)}\, d\nu^{qr(\gamma)}(\beta).
$$
There is similarly a right prehilbert $C_c(\maG(M/H, \maF/H))$-module structure on  $C_c(G_q)$ given by
$$
\varphi\cdot \xi (m, \alpha)=\int_{\beta\in \maG(M/H, \maF/H)^{s(\alpha)}}  \varphi (m, \alpha \beta) \xi(\beta^{-1}) \, d\nu^{s(\alpha)}(\beta)
$$
and
$$
\langle \varphi,\psi \rangle_{\maG(M/H, \maF/H)} (\beta ) = \int_{\gamma\in \maG^{m_0}} \overline{\varphi (s(\gamma), q(\gamma)^{-1})}\; \psi(s(\gamma), q(\gamma)^{-1}  \beta)\, d\lambda^{m_0}(\gamma),
$$
 for a chosen $m_0\in q^{-1}\{r(\beta)\}$. Notice that the last integral does not depend on the choice of $m_0$ due to the $H$-invariance of our Haar system. We can now state our theorem.

\begin{thm}\label{thm:action:libre}
Denote by $\chi_1$ the class of the trivial representation in $\k\k(\C, C^*H)$. Then for $i\in \Z_2$, the following diagram commutes:
 $$\xymatrix{\k_\mathrm{G}^i((F/H)_G)\ar[r]^{q^*} \ar[d]_{\mathrm{Ind}^{\mathcal{F}/H}}~&~\k_{\mathrm{G\times H}}^i(F_{G\times H})\ar[d]^{\chi_1 \underset{C^*H}{\otimes}\mathrm{Ind}^\maF (\bullet)}\\
\k\k^i(C^*G,C^*(M/H,\mathcal{F}/H))\;\;&~\;\;\k\k^i(C^*G,C^*(M,\mathcal{F})).\ar[l]^{\;\;\;\underset{C^*(M,\mathcal{F})}{\otimes}\mathcal{E}_q}
 }$$
So if $a\in \k_\mathrm{G}^i((F/H)_G)$ then ignoring  the quasi-trivial Morita isomorphism, we may write:
$$
\mathrm{Ind}^{\mathcal{F}/H}(a)\cong \chi_{1} \underset{C^*H}{\otimes} \ind (q^*a).
$$ 
\end{thm}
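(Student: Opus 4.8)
The plan is to unwind both composites in the diagram into explicit Kasparov cycles and then to exhibit an isomorphism of cycles between them. Since by Proposition \ref{G-index} the index class depends only on the symbol, I would start from a $G$-invariant, $\maF/H$-leafwise $G$-transversally elliptic operator $P_0$ on $M/H$ with $[\sigma(P_0)]=a$, and lift it to the pull-back operator $\tilde P_0:=q^*P_0$ on $M$, acting on sections of $E:=q^*E_0$. Because $q\colon M\to M/H$ is a $G$-equivariant principal $H$-bundle which preserves the leaves, $\tilde P_0$ is $G\times H$-invariant and leafwise, and its symbol is $q^*a$; the very computation underlying the isomorphism $q^*\colon\k_{\mathrm G}^i((F/H)_G)\to\k_{\mathrm{G\times H}}^i(F_{G\times H})$ shows that $\tilde P_0$ is again leafwise $G\times H$-transversally elliptic. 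Writing $C^*(G\times H)\cong C^*G\otimes C^*H$, this gives
$$
\mathrm{Ind}^\maF(q^*a)=[\maE,\pi,\tilde P]\in\k\k^i(C^*G\otimes C^*H,\,C^*(M,\maF)),
$$
where $\maE$ is the Hilbert $C^*(M,\maF)$-module associated with $E$, $\pi$ the representation of $C^*(G\times H)$ from Proposition \ref{prop:pi*rep}, and $\tilde P$ the self-adjoint operator built from $\tilde P_0$ as in Theorem \ref{index-cycle}.

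Next I would compute the Kasparov product $\chi_1\underset{C^*H}{\otimes}\mathrm{Ind}^\maF(q^*a)$. The class $\chi_1$ is represented by the finitely generated projective $C^*H$-module $p_1C^*H$, with $p_1$ the central projection of $C^*H$ onto the trivial summand, i.e. $p_1=\chi_{\mathrm{triv}}$; so plugging $\chi_1$ into the $C^*H$-factor amounts to compressing the cycle $(\maE,\pi,\tilde P)$ by the projection $\pi(1\otimes p_1)$, which is the averaging idempotent $\Av_H=\int_H U_h(\bullet)\,dh$ onto the submodule $\maE^H$ of $H$-invariant vectors. Arguing exactly as in the treatment of the $\k$-multiplicity of a representation (applied now to the $C^*H$-factor instead of to a finite-dimensional $G$-representation, using the map $\Av$ and Lemma \ref{TechnicIsom}), $\Av_H$ identifies $\C\underset{C^*H}{\otimes}\maE$ with $\maE^H$ as Hilbert $C^*(M,\maF)$-modules, compatibly with the residual $C^*G$-action via $\pi$ and carrying $\mathrm{id}\underset{C^*H}{\otimes}\tilde P$ to the restriction $\tilde P^H:=\tilde P|_{\maE^H}$, which is well defined since $\tilde P$ commutes with $\pi(C^*H)$ by Lemma \ref{lem:Ppi=piP} and is $C^*(M,\maF)$-linear. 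Hence $\chi_1\underset{C^*H}{\otimes}\mathrm{Ind}^\maF(q^*a)=[\maE^H,\pi^H,\tilde P^H]$, where $\pi^H$ is the representation of $C^*G$ on $\maE^H$ obtained from the $C^*G$-part of $\pi$.

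The main step, and the main obstacle, is the remaining Kasparov product with $\maE_q$. Since the imprimitivity bimodule $\maE_q$ carries the zero operator, there is no connection term to construct and
$$
[\maE^H,\pi^H,\tilde P^H]\underset{C^*(M,\maF)}{\otimes}\maE_q=\Big[\maE^H\underset{C^*(M,\maF)}{\otimes}\maE_q,\ \pi^H\otimes 1,\ \tilde P^H\otimes 1\Big].
$$
I would then construct an explicit unitary of Hilbert $C^*(M/H,\maF/H)$-modules
$$
\Theta\ :\ \maE^H\underset{C^*(M,\maF)}{\otimes}\maE_q\ \stackrel{\cong}{\longrightarrow}\ \maE_0,
$$
where $\maE_0$ is the Hilbert $C^*(M/H,\maF/H)$-module of $E_0$ underlying $\mathrm{Ind}^{\maF/H}(a)=[\maE_0,\pi_0,P]$. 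The construction is dictated by the structure of the graph $G_q$: over each $m\in M$ the groupoid morphism $q\colon\maG\to\maG(M/H,\maF/H)$ carries $\maG_m$ onto $\maG(M/H,\maF/H)_{q(m)}$, and $H$-invariant $L^2$-sections of $r^*E=r^*q^*E_0$ over $\maG_m$ are canonically $L^2$-sections of $\bar r^*E_0$ over $\maG(M/H,\maF/H)_{q(m)}$; feeding this into the definition of $C_c(G_q)$ and of its $C^*(M/H,\maF/H)$-valued inner product yields $\Theta$ on a dense subspace, and the $H$-invariance of the Haar system shows it is isometric. It then remains to verify two compatibilities: that $\Theta$ intertwines $\pi^H\otimes 1$ with the downstairs representation $\pi_0$ of $C^*G$ — immediate since the $G$-action on $M$ is the $q$-pull-back of the one on $M/H$, so averaging over $G$ commutes with $\Theta$ — and that $\Theta$ intertwines $\tilde P^H\otimes 1$ with the operator $P$ downstairs (built from $P_0$) — immediate since $\tilde P_0=q^*P_0$ acts on $H$-invariant sections exactly as $P_0$ acts on $M/H$. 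Combining these identifications gives
$$
\chi_1\underset{C^*H}{\otimes}\mathrm{Ind}^\maF(q^*a)\underset{C^*(M,\maF)}{\otimes}\maE_q=[\maE_0,\pi_0,P]=\mathrm{Ind}^{\maF/H}(a),
$$
which is the commutativity of the diagram modulo the quasi-trivial Morita isomorphism $\maE_q$; the odd case $i=1$ is handled identically with self-adjoint cycles throughout. The only genuinely delicate point is showing that $\Theta$ is a well-defined unitary of Hilbert modules intertwining all of this data — that is, matching the $H$-invariant leafwise $L^2$-theory upstairs with the leafwise $L^2$-theory downstairs through the bimodule $G_q$.
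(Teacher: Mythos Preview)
Your argument is correct and in substance the same as the paper's; the only real difference is the order in which you perform the two Kasparov products. You follow the diagram literally, first applying $\chi_1\otimes_{C^*H}(\cdot)$ to pass to $H$-invariants and obtain $[\maE^H,\pi^H,\tilde P^H]$, and then tensoring with $\maE_q$. The paper does the opposite: it first computes $\mathrm{Ind}^{\maF}(q^*a)\otimes_{C^*(M,\maF)}\maE_q$ via an explicit unitary $\maV:\maE\otimes_{C^*(M,\maF)}C^*(G_q)\to C^*(G_q,r^*E)$, and only afterwards applies $\chi_1\otimes_{C^*H}(\cdot)$, which then becomes the transparent identification $C^*(G_q,r^*E)^{\theta_H}\cong C^*(\maG(M/H,\maF/H),r^*E_0)=\maE_0$ coming from the principal $H$-fibration $G_q\to\maG(M/H,\maF/H)$; associativity of the Kasparov product closes the argument. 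The advantage of the paper's order is that the ``delicate point'' you isolate simply does not arise: once one has the concrete module $C^*(G_q,r^*E)$ in hand, taking $H$-invariants is a one-line computation on the inner product. In your order the cleanest way to build $\Theta$ is in fact to observe that, since each $U_h$ is $C^*(M,\maF)$-linear for the trivial $H$-action, the averaging projection $\int_H U_h\otimes 1\,dh$ identifies $\maE^H\otimes_{C^*(M,\maF)}\maE_q$ with $(\maE\otimes_{C^*(M,\maF)}\maE_q)^H$, which reduces you to the paper's computation via $\maV$. One minor point: the paper works in the unbounded picture with an order-$1$ representative $A$ and uses the pull-back groupoid description $\maG\cong\hat\maG$ to make the lift $\hat A$ literally $\mathrm{id}\otimes A\otimes\mathrm{id}$, so that the intertwining $\maV\circ(\hat A\otimes 1)=(\mathrm{id}\otimes A)\circ\maV$ is immediate; your assertion that $\tilde P^H\otimes 1$ corresponds to $P$ under $\Theta$ is correct but benefits from the same explicit description of the lift.
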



\begin{proof}
Recall that $H$ acts freely on $M$ and preserves the leaves of $\maF$.  The holonomy groupoid upstairs is a principal  $H$-fibration over $G_q$, this latter is an $H$-fibration over the holonomy groupoid downstairs. More precisely, $\maG$ can be  identified with the smooth pull-back groupoid $\hat{\maG}$ given by
$$
\hat{\maG}:= \{(m, \alpha, m')\in M\times \maG(M/H, \maF/H)\times M\,\vert \, q(m)=r(\alpha) \text{ and } q(m')= s(\alpha)\}.
$$
The Haar system on $\maG$ is supposed to be   $H$-invariant and normalized. 
More precisely, we assume that the Haar system $(\nu^m)_{m\in M}$ on $\hat{\maG}$ combines the normalized Haar measure on $H$  with a Haar system downstairs $(\bar\nu^{\overline{m}})_{\overline{m}\in M/H}$ on $\mathcal{G} (M/H, \maF/H)$. 
Let $A$ be a $G$-invariant leafwise $G$-transversally elliptic pseudodifferential operator representing $a$, of order $1$ and supported as close as we please to the units $M/H$.  So the operator $A$  can be seen as a $\maG(M/H, \maF/H)$-invariant operator along the leaves of the groupoid $\maG(M/H, \maF/H)$, i.e. $A=(A_x)_{x\in M/H}$  with  
$$
A_x: C_c^\infty (\maG(M/H, \maF/H)_x, r^*E) \longrightarrow C_c^\infty (\maG(M/H, \maF/H)_x, r^*E)\text{ and with the usual equivariance}. 
$$
Using a partition of unity argument, we may lift $A$ to a $G\times H$-invariant leafwise $G\times H$-transversally elliptic pseudodifferential operator $\hat{A}$ on  $(M, \maF)$, which represents $q^*a$. Roughly speaking, the operator $\hat{A}$ corresponds in the identification $\maG\simeq \hat{\maG}$ to tensoring locally by the identity on both sides and can be denoted abusively by $\id\otimes A\otimes \id$. The index class of $q^*a$ can then be represented in $\k\k^i (C^*(G\times H), C^*(M, \maF))$ by the unbounded Kasparov cycle with the closure of $\hat{A}$ as an operator acting on the Hilbert module $\maE$ corresponding to the pull-back bundle $q^*E$ over $M$, and with the usual representation $\pi_{G\times H}$ of $C^*(G\times H)$.


Let us  first compute  the  Kasparov product $\ind(q^*a) \underset{C^*(M, \maF)}{\otimes} \mathcal{E}_q$. This is by definition the class of the triple 
$$ 
\left( \mathcal{E} \underset{C^{*}(M,\mathcal{F})}{ \otimes} C^*(G_q), \; \pi_{G\times H} \underset{C^*(M,\mathcal{F})}{\otimes} \id, \; \hat{A} \underset{C^*(M,\mathcal{F})}{\otimes} \id\right).
$$ 
If we denote as well by $r:G_q\to M/H$ the map $r(m, \alpha):= r(\alpha)=q(m)$ then it is easy to check that 
the Hilbert module $\mathcal{E} \underset{C^{*}(M,\mathcal{F})}{\otimes} C^*(G_q)$  is isomorphic to $C^*(G_q, r^*E)$, i.e. the completion of $C_c(G_q, r^*E)$ with respect to the prehilbertian structure given for $e_1, e_2\in C_c(G_q, r^*E)$ by 
\begin{equation}\label{equ:ps:C*MxG}
\langle e_1,e_2 \rangle (\beta) =\int_{\mathcal{G}^{m}}\langle e_1( s(\gamma), q(\gamma)^{-1}), e_2(s(\gamma), q(\gamma)^{-1}  \beta) \rangle d\nu^{m}(\gamma ).
\end{equation}
To be specific, this identification  can be described by a unitary $\maV$ which is given for $u\in C_c(\maG, r^*E)$ and $f\in C_c (G_q)$ by the formula
$$
\maV(u \otimes f )(m,\beta)=\int_{\mathcal{G}^m} u(\gamma ) f( s(\gamma), q(\gamma)^{-1} \beta ) d\nu^m(\gamma ).
$$
One then checks immediately that for any $\varphi\in C(G\times H)$, we have $\maV\circ (\pi_{G\times H}(\varphi) \otimes 1)=\tilde{\pi}_{G\times H}(\varphi) \circ \maV$, where for any $u \in C_c(G_q, r^*E)$, 
$$
\left[\tilde{\pi}_{G\times H}(\varphi ) (u) \right] (m,\eta)=\int_{H\times G} \varphi(g,h)~(g,h)\Big(u\big((g,h)^{-1} (m,\eta)\theta_{G\times H}^{(g,h)^{-1}}(s(\eta))\big)\Big) dg\, dh. 
$$

%
%
%

Similarly, we have that $\maV\circ (\hat{A}\otimes_{C^*(M, \maF)} \id)=(\id \otimes A) \circ \maV$, where $\id \otimes A$ stands for a first order  lift of the operator $A$ to $G_q$ using the $H$-fibration $G_q\to \maG(M/H, \maF/H)$. To sum up, we see  that
$$
\ind(q^*a) \underset{C^*(M, \maF)}{\otimes} \mathcal{E}_q =\big[ C^*(G_q ,r^*E), \tilde{\pi}_{G\times H}, \id\otimes A \big]. 
$$
It now remains to compute the Kasparov product of this latter class with the trivial representation of $H$.   
We shall use the  identification 
$$
C^*(G_q,r^*E)^{\theta_H}\simeq C^*(\mathcal{G}(M/H, \maF/H) , r^*E).
$$ 
Notice indeed  that for  $\theta_H$-invariant sections $e_1$ and $e_2$, we have:
$$
\begin{array}{lll}
\langle e_1,e_2 \rangle (\beta) &=  \int_{\mathcal{G}^{m_0}}\langle e_1(s(\gamma ),q(\gamma^{-1}) ), e_2(s(\gamma), q(\gamma^{-1}) \beta) \rangle \; d\nu^{m_0}(\gamma )\\ 
&=\int_{\mathcal{G}(M/H, \maF/H)^{r(\beta)}}\;  \langle \overline{e}_1( \beta^{-1}_1), \overline{e}_2 (\beta_1^{-1} \beta) \rangle \; d\overline{\nu}^{r(\beta)}(\beta_1 ).
\end{array}
$$ 
In the first expression $m_0\in M$ is any chosen element of the fiber over $r(\beta)$, and the last equality is a consequence of the $\theta_H$-invariance together with our choice of Haar system upstairs which uses the {\underline{normalized}} Haar measure on $H$. Now $\id \underset{C^*H}{\otimes} \tilde{\pi}_{G\times H}$ and $\id \underset{C^*H}{\otimes} (\id \otimes A)$ both make sense and by using the previous isomorphism  we can see that the first coincides with the representation $\pi_G$ of $C^*G$ on $C^*(\mathcal{G}(M/H, \maF/H),r^*E)$ while the second is just the  operator $A$. The verification is an exact rephrasing of the same proof for a single operator and is therefore omitted here. 
%
%
Whence we eventually  get the allowed equality
$$
\mathrm{Ind}^{\mathcal{F}/H}(a)=\chi_{1} \underset{C^*(H)}{\otimes} \, \left[ \ind (q^*a)\, \underset{C^*(M,\mathcal{F})}{\otimes}\,  \mathcal{E}_q\right].
$$ 
Associativity of the Kasprov product allows to conclude.
\end{proof}

If we replace $a$ by the symbol corresponding to the twist of $a$ by a given unitary representation $(\alpha, W_\alpha)$, then the same proof yields to the following result:

\begin{thm}$\cite{atiyah1974elliptic}$\label{thm:action:libre-twistee}
Let $(W_\alpha, \alpha)$ be a given finite dimensional unitary  representation of $H$ and denote as before by $\chi_\alpha$ the corresponding class in $\k\k(\C, C^*H)$. Then the following diagram commutes:
 $$\xymatrix{\k_\mathrm{G}^i((F/H)_G)\ar[r]^{q^*} \ar[d]_{\underline{W}_{\alpha}^*\otimes \; \bullet}~&~\k_{\mathrm{G\times H}}^i(F_{G\times H})\ar[r]^{\hspace*{-0.5cm}\mathrm{Ind}_{}^{\mathcal{F}}}~&~\k\k^i(C^*(G\times H),C^{*}(M,\mathcal{F}))\ar[d]^{\chi_\alpha \underset{C^*H}{\otimes} \bullet}\\
\k_\mathrm{G}^i((F/H)_G)\ar[r]_{\hspace*{-0.9cm}\mathrm{Ind}^{\mathcal{F}/H}} ~&\k\k^i(C^*G,C^*(M/H,\mathcal{F}/H))&~\k\k^i (C^*G,C^*(M,\mathcal{F}))\ar[l]^{\underset{C^*(M,\mathcal{F})}{\otimes}\mathcal{E}_q}. 
 }$$
In other words, if $a\in \k_G^i((F/H)_G)$ and ignoring the Morita isomorphism $\underset{C^*(M,\mathcal{F})}{\otimes}\mathcal{E}_q$, we have 
$$
\mathrm{Ind}^{\mathcal{F}/H}( \underline{W}_{\alpha}^*\otimes a)\cong \chi_{\alpha } \underset{C^*H}{\otimes} \ind (q^*a).
$$ 
\end{thm}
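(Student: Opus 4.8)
The plan is to deduce this from Theorem~\ref{thm:action:libre} itself, applied to the twisted symbol, together with one representation-theoretic identity. The key geometric point is that the pullback $q^*\underline{W}_\alpha^*$ of the homogeneous bundle $\underline{W}_\alpha^*\to M/H$ is canonically trivial as a bundle over $M$, namely $M\times W_\alpha^*$, but it carries the $H$-action combining the geometric action on $M$ with the dual representation $\alpha^*$ on $W_\alpha^*$ (while $G$ acts only geometrically). Hence, if $A$ is a $G$-invariant leafwise $G$-transversally elliptic operator on $(M/H,\maF/H)$ representing $a$ and $\hat A$ is its lift to $(M,\maF)$ as in the proof of Theorem~\ref{thm:action:libre}, then the class $q^*(\underline{W}_\alpha^*\otimes a)$ is represented by $\hat A\otimes\id_{W_\alpha^*}$ acting on the Hilbert module associated with $q^*E\otimes W_\alpha^*$, the representation of $C^*(G\times H)$ being $\pi_{G\times H}$ twisted in the $H$-directions by $\alpha^*$. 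Equivalently, as a $G\times H$-equivariant Kasparov cycle, $\ind\bigl(q^*(\underline{W}_\alpha^*\otimes a)\bigr)$ is the cycle representing $\ind(q^*a)$ tensored by the finite-dimensional $C^*H$-module $(W_\alpha^*,\alpha^*)$.

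Applying Theorem~\ref{thm:action:libre} to $\underline{W}_\alpha^*\otimes a\in\k_\mathrm{G}^i((F/H)_G)$ in place of $a$ gives, modulo the Morita isomorphism $\underset{C^*(M,\maF)}{\otimes}\mathcal{E}_q$,
$$
\mathrm{Ind}^{\maF/H}(\underline{W}_\alpha^*\otimes a)\;\cong\;\chi_1\underset{C^*H}{\otimes}\ind\bigl(q^*(\underline{W}_\alpha^*\otimes a)\bigr).
$$
By the previous paragraph the right-hand side equals $\chi_1\otimes_{C^*H}\bigl(\mathcal Z\otimes(W_\alpha^*,\alpha^*)\bigr)$, where $\mathcal Z$ denotes the cycle representing $\ind(q^*a)$, so it remains only to identify this with $\chi_\alpha\otimes_{C^*H}\mathcal Z$. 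At the level of the underlying $H$-modules this is exactly Frobenius reciprocity / Schur's lemma: $(V\otimes W_\alpha^*)^H\cong\Hom_H(W_\alpha,V)$, the $\alpha$-multiplicity space of $V$, and this isomorphism is compatible with the residual left $C^*G$-action, the right $C^*(M,\maF)$-action and the operator. Associativity of the Kasparov product then yields the commutativity of the diagram.

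The step I expect to cost real work is precisely this last identification, carried out at the level of Hilbert $C^*(M,\maF)$-module cycles and not merely of $K$-theory classes: one must produce a unitary of $C^*(M,\maF)$-modules realising $(V\otimes W_\alpha^*)^H\cong\Hom_H(W_\alpha,V)$ which intertwines the two representations of $C^*G$ and the two operators, so that ``twist downstairs by $\underline{W}_\alpha^*$ and then take $H$-invariants'' and ``take the $\alpha$-isotypical part'' coincide on the nose. This is the foliated transcription of the single-operator identity of \cite{atiyah1974elliptic} expressing the $\alpha$-multiplicity in the index of the lifted operator as the index of $A$ twisted by $\underline{W}_\alpha^*$ downstairs. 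Since every object here --- the Hilbert module $\maE$, the representation $\pi$ of $C^*G$, the operator $P$, and the imprimitivity bimodule $\mathcal{E}_q$ --- is built leafwise, fibrewise over the holonomy groupoids of $(M,\maF)$ and $(M/H,\maF/H)$, that verification transfers essentially unchanged, the fibre Hilbert spaces of the classical case being replaced by the Hilbert module $\maE$ and the pointwise operators by leafwise ones; I would simply carry it out in that form and invoke the proof of Theorem~\ref{thm:action:libre} for the bookkeeping of $\mathcal{E}_q$ and $\pi_G$.
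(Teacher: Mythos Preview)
Your proposal is correct and follows essentially the same approach as the paper: the paper simply states that ``if we replace $a$ by the symbol corresponding to the twist of $a$ by a given unitary representation $(\alpha, W_\alpha)$, then the same proof yields'' the result, i.e.\ one reruns the argument of Theorem~\ref{thm:action:libre} with $\underline{W}_\alpha^*\otimes a$ in place of $a$. You have in fact been more explicit than the paper, spelling out the identification $q^*(\underline{W}_\alpha^*\otimes a)\simeq q^*a\otimes(W_\alpha^*,\alpha^*)$ and the Frobenius-type isomorphism $(V\otimes W_\alpha^*)^H\cong\Hom_H(W_\alpha,V)$ that converts $\chi_1\otimes_{C^*H}\bigl(\mathcal Z\otimes W_\alpha^*\bigr)$ into $\chi_\alpha\otimes_{C^*H}\mathcal Z$.
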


\begin{remarque}
In particular, as an element of $\mathrm{Hom}\big(R(H),\k\k^i(C^*G,C^{*}(M,\mathcal{F}))\big)$ we have
 $$\mathrm{Ind}^{{\mathcal{F}}} (q^*a)=\sum \limits_{\alpha \in \hat{H}}\hat{\chi}_\alpha \otimes \mathrm{Ind}^{{\mathcal{F}/H}}( \underline{W}_{\alpha}^*\otimes a), 
 $$
where  $\hat{\chi}_{\alpha }$ is the element of $\Hom (R(H), \Z)$ given by the usual multiplicity.

\end{remarque}

When the group $G$ is the trivial group, we obtain the following expected relation between the Connes-Skandalis index of leafwise elliptic operators downstairs and the index of leafwise $H$-transversally elliptic operators upstairs.

\begin{cor}
Let $q:(M,\mathcal{F})\to (M/H, \maF/H)$ be as above a principal $H$-fibration of smooth foliations, recall that $H$ preserves the leaves of $(M, \maF)$. Then for any leafwise elliptic pseudodifferential symbol $\sigma$ on $(M/H, \maF/H)$ so that  $a=[\sigma]\in \k^i(F/H)$, we have the following equality:
$$
\chi_\alpha \underset{C^*H}{\otimes}\mathrm{Ind}^{\mathcal{F}/H} (q^*a) \underset{C^*(M,\mathcal{F})}{\otimes} \mathcal{E}_q=\ind(\underline{W}_{\alpha}^*\otimes a ),
$$
where $ \ind(\underline{W}_{\alpha}^*\otimes a )\in \k\k^i(\mathbb{C}, C^*(M,\mathcal{F}))\simeq \k_i (C^*(M,\mathcal{F}))$ is the Connes-Skandalis index, as defined in \cite{ConnesSkandalis}, for the leafwise elliptic symbol $\underline{W}_{\alpha}^*\otimes \sigma$ on the compact foliated manifold $(M, \maF)$.
\end{cor}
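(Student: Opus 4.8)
This is precisely the special case $G=\{e\}$ of Theorem~\ref{thm:action:libre-twistee}, reinterpreted through the fact that, for a trivial acting group, the index morphism of the present paper is the Connes--Skandalis leafwise index. The plan is to reduce to that theorem and then spell out the identifications.

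First I would degenerate everything in Section~\ref{Section:FreeActions} to $G=\{e\}$: then $q\equiv 0$ on both $M$ and $M/H$, so that $F_G=F$, $(F/H)_G=F/H$, $F_{G\times H}=F_H$, while $\k_{\mathrm G}^i=\k^i$, $\k_{\mathrm{G\times H}}^i=\k_{\mathrm H}^i$ and $C^*G=\C$; in particular a $G$-invariant leafwise $G$-transversally elliptic operator on $(M/H,\maF/H)$ is just a leafwise elliptic one, so the datum $a=[\sigma]\in\k^i(F/H)$ of the corollary is an admissible input for the bottom row of the diagram.

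Next I would identify the two index morphisms occurring in the degenerate square. When the acting group is trivial, the Kasparov cycle $(\maE,\pi,P)$ attached in Theorem~\ref{index-cycle} to a leafwise elliptic symbol has $\pi:\C\to\maL_{C^*(M/H,\maF/H)}(\maE)$ the unital homomorphism and $P$ the self-adjoint bounded transform of the quantized operator, which is exactly the cycle used in \cite{ConnesSkandalis} to represent the Connes--Skandalis index in $\k\k^i(\C,C^*(M/H,\maF/H))\simeq\k_i(C^*(M/H,\maF/H))$. Hence $\mathrm{Ind}^{\maF/H}(\underline{W}_\alpha^*\otimes a)$ is the Connes--Skandalis leafwise index of the elliptic symbol $\underline{W}_\alpha^*\otimes\sigma$, and, transported through the Morita equivalence $\maE_q$ between $C^*(M,\maF)$ and $C^*(M/H,\maF/H)$, it is the class $\ind(\underline{W}_\alpha^*\otimes a)$ of the right-hand side. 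Symmetrically, the outer group of Proposition~\ref{HequivariantIndexClass} being trivial, $\mathrm{Ind}^{\maF}$ applied to $q^*a\in\k_{\mathrm H}^i(F_H)$ is simply the $H$-transversally elliptic index $\ind(q^*a)\in\k\k^i(C^*H,C^*(M,\maF))$ of the pull-back symbol $q^*\sigma$.

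Finally, reading the commuting square of Theorem~\ref{thm:action:libre-twistee} with $G=\{e\}$ from the upper-left corner $\k^i(F/H)$ to the lower-right corner $\k\k^i(\C,C^*(M/H,\maF/H))$ along its two edges gives
$$
\chi_\alpha\underset{C^*H}{\otimes}\ind(q^*a)\underset{C^*(M,\maF)}{\otimes}\maE_q=\mathrm{Ind}^{\maF/H}(\underline{W}_\alpha^*\otimes a),
$$
and inserting the identifications above, together with associativity of the Kasparov product, yields the asserted equality. I do not expect a substantial obstacle here: the content is entirely contained in Theorem~\ref{thm:action:libre-twistee}, and the only delicate point is the bookkeeping of which coefficient algebra each term lives over and in which direction the Morita class $\maE_q$ is applied, so that both sides are seen to represent the same element of $\k_i(C^*(M,\maF))$ under the canonical Morita identification.
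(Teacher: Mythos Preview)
Your approach is correct and is precisely what the paper intends: the corollary is stated immediately after Theorem~\ref{thm:action:libre-twistee} as the specialization to the trivial group $G$, with no separate proof given. You have also correctly navigated the apparent misprint in the displayed formula (the left-hand side should carry $\mathrm{Ind}^{\maF}(q^*a)$ rather than $\mathrm{Ind}^{\maF/H}(q^*a)$, and the right-hand side naturally lives over $C^*(M/H,\maF/H)$ before the Morita identification), and your bookkeeping of the Morita class $\maE_q$ is exactly the point the paper summarizes as ``ignoring the Morita isomorphism.''
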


\noindent
\begin{remarque} The previous corollary can as well be stated with the extra action of the compact Lie group $G$ and gives a relation between the corresponding $G$-indices \cite{benameur1993theoreme}.
\end{remarque}

\subsection{The excision property}\label{Section:Excision}

In this subsection, we show an excision property for the index class of $G$-invariant leafwise $G$-transversally elliptic operators. More precisely, we shall first extend our definition of the index  morphism to the case of any smooth foliated (open) manifold $(U, \maF^U)$  which is again endowed with a leaf-preserving  action of  $G$ by holonomy diffeomorphisms, and obtain an index  morphism
$$
\Ind^{\maF^U}\, : \, \k_\mathrm{G}^i (F^U_G) \longrightarrow \k\k^i  (C^*G, C^*(U, \maF^U)).
$$
Then we shall show the compatibility of this morphism with foliated open embeddings, in particular in closed foliated manifolds, this is the expected excision result. Again $C^*(U, \maF^U)$ is the Connes algebra of the foliation $(U, \maF^U)$, i.e. the $C^*$-completion of the convolution algebra of compactly supported continuous functions on the holonomy groupid $\maG (U, \maF^U)$ of $(U, \maF^U)$.  As usual, we have fixed a $G$-invariant metric on $U$ and used it to identify for instance the colongitudinal bundle $(F^U)^*$ with the longitudinal bundle $F^U$.
We shall use the following classical lemma which is shown for instance in \cite[lemma 3.6]{atiyah1974elliptic} in the non-foliated case, see also \cite{Atiyah-Singer:I} for the original proof in the elliptic case and  \cite{ConnesSkandalis} for the leafwise elliptic case. The proof for the foliated $G$-transversely elliptic case is similar with the same standard techniques and hence is omitted. Let  $\pi_U : F^{U} \rightarrow U$  be the projection of the tangent space to the foliation $\maF^U$. We denote as before by $F^U_G = F^U\cap T_GU$.

\begin{lem}\label{lem:symbole:T^VU}\
Each element $a\in \K(F^U_G)$ can be represented by a $G$-equivariant  morphism $ \pi_U^*E^+ \stackrel{\sigma}{\longrightarrow} \pi_U^*E^-$ {over the whole of $F^U$, which is zero-homogeneous for large $\xi$}
 with $E^\pm$ being $G$-equivariant vector bundles over $U$, and such that:
\begin{itemize}
\item Outside some compact $G$-subspace $L$ in $U$, the bundles $E^\pm$  are trivialized and the restriction of $\sigma$ to $\pi_U^{-1} (U\smallsetminus L)$ is the identity morphism modulo the trivializations of $E^\pm$.
\item The morphism $\sigma(x, \xi): E^+_x\to E^-_x$ is an isomorphism for $(x, \xi)\in F^U_G\smallsetminus U$.
\end{itemize}
\end{lem}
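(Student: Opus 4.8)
The plan is to carry over to the equivariant leafwise setting Atiyah's reduction of transversally elliptic symbols, \cite[Lemma 3.6]{atiyah1974elliptic} and \cite{Atiyah-Singer:I}, separating three normalizations of a representative of $a\in\K(F^U_G)$: that its bundles be pulled back from the base $U$; that the defining morphism, a priori only defined on $F^U_G$, be extended to all of $F^U$; and that the bundles be trivialized and the morphism be the identity away from a compact $G$-subspace. The first observation is that, since $q_x(t\xi)=t^2q_x(\xi)$, the subset $F^U_G\subset F^U$ is closed, $G$-invariant and fibrewise conical, and the fibrewise scaling $(x,\xi,t)\mapsto(x,t\xi)$ is a $G$-equivariant strong deformation retraction of $F^U_G$ onto the zero section $U$. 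By equivariant homotopy invariance of $G$-vector bundles ($G$ compact, $F^U_G$ paracompact), every $G$-vector bundle on $F^U_G$ is then $G$-isomorphic to the pull-back, along $p:=\pi_U|_{F^U_G}\colon F^U_G\to U$, of a $G$-vector bundle on $U$.

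First I would invoke the standard model of the equivariant $K$-theory with compact supports: the class $a$ is represented by a triple $(\maE^+,\maE^-,\tau)$ of $G$-vector bundles on $F^U_G$ together with a $G$-morphism $\tau\colon\maE^+\to\maE^-$ invertible outside a compact $G$-subset. Using the retraction I would replace $\maE^\pm$ by $p^*E^\pm$ for $G$-bundles $E^\pm$ on $U$ and transport $\tau$ to a $G$-morphism $\sigma_0\colon p^*E^+\to p^*E^-$, still invertible off a compact $C\subset F^U_G$. A fibrewise radial homotopy, using that the fibres of $F^U_G$ are cones, lets me arrange that $\sigma_0$ is $0$-homogeneous in $\xi$ for $\|\xi\|\ge 1$, that $C\subset p^{-1}(L_0)$ for a compact $G$-subspace $L_0\subset U$, and — the point — that over $U\setminus L_0$ the morphism $\sigma_0(x,\xi)$ is independent of $\xi$ and equal to a $G$-isomorphism $\psi(x)\colon E^+_x\to E^-_x$; then $\sigma_0$ is automatically invertible at every point of $F^U_G\setminus U$ lying over $U\setminus L_0$. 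Next I would view $\sigma_0$ as a continuous $G$-equivariant section over the closed $G$-subset $F^U_G$ of the bundle $\pi_U^*\Hom(E^+,E^-)\to F^U$, and extend it by the equivariant Tietze theorem to a $G$-section $\sigma$ over all of $F^U$, taking the extension to be the $\xi$-independent morphism $\psi$ over $\pi_U^{-1}(U\setminus L_1)$ for some compact $G$-subspace $L_1$ with $L_0\subset\mathrm{int}(L_1)$; thus $\sigma$ is invertible on all of $\pi_U^{-1}(U\setminus L_1)$ and its restriction to $F^U_G$ still represents $a$.

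Finally I would pick a $G$-invariant relatively compact open $U'$ with $L_1\subset U'\Subset U$, choose over the compact $\overline{U'}$ a $G$-bundle $E''$ such that $E^+\oplus E''$ is $G$-trivial, and replace $(E^+,E^-,\sigma)$ by $(E^+\oplus E'',E^-\oplus E'',\sigma\oplus\id)$; the $G$-isomorphism $\psi\oplus\id$ over the collar $\overline{U'}\setminus\mathrm{int}(L_1)$ then furnishes a compatible trivialization of $E^-\oplus E''$ there in which $\sigma\oplus\id$ is literally the identity, so one can glue in the trivial bundle over $U\setminus U'$ and extend $\sigma$ by the identity; with $L:=L_1$ all requirements hold, and $\sigma$ remains an isomorphism over $F^U_G\setminus U$. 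The one real point of the argument is that these normalizations can be performed compatibly; I expect the delicate step to be making $\sigma_0$ genuinely $\xi$-independent (not merely invertible) away from a compact piece of the base before extending, and then carrying the trivializations of $E^\pm$ through the collar — everything else is equivariant bookkeeping which is for free because $G$ is compact (averaging of metrics and partitions of unity, the equivariant Tietze theorem, Mostow--Palais). This is precisely the reasoning of \cite[Lemma 3.6]{atiyah1974elliptic} with $F^U$ and $F^U_G$ in place of $TX$ and $T_GX$, which is why the authors quote the statement as routine.
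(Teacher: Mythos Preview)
Your sketch is correct and follows exactly the classical route the paper invokes: the authors do not give a proof at all but simply cite \cite[Lemma~3.6]{atiyah1974elliptic}, \cite{Atiyah-Singer:I} and \cite{ConnesSkandalis}, remarking that the foliated $G$-transversally elliptic case is obtained by the same standard techniques. Your write-up is precisely a careful rendering of that omitted argument, with $F^U$ and $F^U_G$ playing the roles of $TX$ and $T_GX$; there is nothing to add or correct.
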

\noindent

So, the first item means that there exist bundle $G$-equivariant isomorphisms over $U\smallsetminus L$ (or rather over each of its connected components)
$$
\psi^{\pm} \,:\,  E^\pm\vert_{U\smallsetminus L} \longrightarrow (U\smallsetminus L)\times \C^{\dim (E^\pm)} \text{ such that } \forall (x, \xi)\in \pi_U^{-1} (U\smallsetminus L): \sigma (x, \xi) = (\psi_x^-)^{-1} \circ \psi^+_x: E^+_x\rightarrow E^-_x.
$$
We endow the vector bundles $E^\pm$ with $G$-invariant hermitian structures and consider the Hilbert modules $\maE^\pm$ over $C^*(U, \maF^U)$ which, as in the previous sections, are the completions of the prehilbertian $C_c (\maG (U, \maF^U))$-modules $C_c (\maG (U, \maF^U), r^* E^\pm)$. Moreover, using the equivalence relation of stable homotopies with compact support as in \cite{Atiyah-Singer:I},  the bundle trivialization $\psi^\pm$ can be assumed to be {\underline{bounded}} and in fact even fiberwise unitaries for the hermitian structures.  We thus assume  as well that $\sigma^*\sigma$ and $\sigma\sigma^*$ are the identity bundle isomorphisms of $E^+$ and $E^-$ respectively, over $U\smallsetminus L$. By using the holonomy action as in Section \ref{Section.Index}, we can endow the Hilbert modules  $\maE^\pm$ with the structure of Hilbert $G$-modules when $C^*(U, \maF^U)$ is trivially acted on by the compact Lie group $G$. We can now quantize any such zero-degree homogeneous symbol $\sigma$ and choose a uniformly supported zero-th order $G$-invariant pseudodifferential $\maG (U, \maF^U)$-operator $P_0: C^\infty_c(\maG (U, \maF^U), r^*E^+) \rightarrow C^\infty_c(\maG (U, \maF^U), r^*E^-)$ with the principal symbol equal to $\sigma$. More precisely, uniform support is taken in the sense of \cite{Shubin92}, see also \cite[Proposition 4.6]{ConnesSkandalis}. Here, we can in fact ensure that   $P_0$ is the identity operator outside some compact $G$-subspace  $L'$ {whose interior contains $L$}, i.e. that we have
$$
P_0(\eta)(\gamma)=(\psi^-_{x})^{-1}\circ (\psi^+_{x})(\eta(\gamma)), \text{for any }\eta\in C_c^\infty (\maG(U, \maF^U)_x^{U\setminus L'}, r^* E^+).
$$
Hence $P_0^*P_0$ and $P_0P_0^*$ reduce to the identity operators on the sections which are supported above $U\smallsetminus L'$, say in $r^{-1} (U\smallsetminus L')$.


The operator  $P_0$ hence reduces to multiplication by the unitary bundle morphism $(\psi^-)^{-1}\circ (\psi^+)$ over $U\smallsetminus L'$, and it is easy to deduce that it extends to an adjointable $G$-equivariant operator from $\maE^+$ to $\maE^-$ \cite{NWX} that we still denote by $P_0$, see also \cite{vassout2006unbounded, BenameurHeitschTransverseNCG}. We  denote  as usual by $P$ the self-adjoint $G$-invariant operator 
\begin{equation}\label{equ:P:non:compact}
P:=\begin{pmatrix}
0&P_0^*\\
P_0& 0
\end{pmatrix}
\end{equation} acting on the Hilbert $G$-module $\maE=\maE^+\oplus \maE^-$. Recall that we are considering the trivial $G$-action on $C^*(U, \maF^U)$ and that  $\maE$ is endowed with the structure of a $\Z_2$-graded Hilbert $G$-module.

\begin{prop}\label{Index-intrinseque}\
The triple $(\maE, \pi, P)$ is a $\Z_2$-graded Kasparov cycle over the pair of $C^*$-algebras \linebreak $(C^*G, C^*(U, \maF^U))$ and defines an index class in $\k\k (C^*G, C^*(U, \maF^U))$ which only depends on the \linebreak $G$-equivariant stable homotopy class $a=[\sigma]\in \K(F^U_G)$ and is denoted $\Ind^{\maF^U} (a)$. Moreover, we get in this way a well defined index morphism for the open foliation $(U, \maF^U)$:
$$
\Ind^{\maF^U}\, : \, \K(F^U_G) \longrightarrow \k\k (C^*G, C^*(U, \maF^U)).
$$
{We have similarly a well defined (ungraded) index map
$$
\Ind^{\maF^U}\, : \, \k^1_\mathrm{G}(F^U_G) \longrightarrow \k\k^1 (C^*G, C^*(U, \maF^U)).
$$}
\end{prop}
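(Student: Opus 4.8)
The plan is to verify, in the order of the axioms, that $(\maE,\pi,P)$ is a $\Z_2$-graded Kasparov $(C^*G,C^*(U,\maF^U))$-cycle, and then to check that the resulting class is insensitive to all the choices. The statements that $\maE$ is a $\Z_2$-graded Hilbert $G$-module over $C^*(U,\maF^U)$ endowed with the trivial $G$-action, that $\pi$ extends to a grading-preserving $*$-representation of $C^*G$ by adjointable operators which is $G$-equivariant for the conjugation action, and that $P$ is odd, self-adjoint and adjointable, follow verbatim from Lemma \ref{lem:unitary:U}, Proposition \ref{prop:pi*rep} and the discussion preceding it, since those constructions are local over $U$ and never use compactness of the ambiant manifold. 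Since $P_0$ is $G$-invariant — it is a $G$-invariant leafwise pseudodifferential operator over $L'$ and a $G$-equivariant bundle morphism over $U\setminus L'$ — the operator $P$ commutes with the unitaries $U_g$, so $[\pi(\varphi),P]=0$ for $\varphi\in L^1(G)$ and, by continuity, for all $\varphi\in C^*G$, exactly as in Lemma \ref{lem:Ppi=piP}.

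The only substantial point is that $(\id-P^2)\circ\pi(\varphi)\in\maK_{C^*(U,\maF^U)}(\maE)$ for every $\varphi\in C^*G$. I would follow the proof of Theorem \ref{index-cycle}, but replacing Proposition \ref{prop:inégalité:preuve:thm} by its non-compact counterpart, Proposition \ref{prop:ineq:non:compact}. The principal symbol $\id-\sigma_P^2$ of $\id-P^2$ restricts to a compactly supported (hence $C_0$) section over $F^U_G$, because by Lemma \ref{lem:symbole:T^VU} the normal form $\sigma$ is an isomorphism off the zero section of $F^U_G$; by Lemma \ref{lem:Kasparov:eq:def}, applied to a quantizable representative, this gives the estimate of Definition \ref{ope:trans:elliptic:technique}. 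Moreover $\id-P^2$ is supported over the compact set $L'$ since $P_0$ is the identity morphism outside it. Applying Proposition \ref{prop:ineq:non:compact}, for each $\varepsilon>0$ we obtain constants $c_1,c_2>0$ and $G$-invariant self-adjoint compact operators $R_1,R_2$ on $\maE$ with $-(c_1Q+\varepsilon+R_1)\leq\id-P^2\leq c_2Q+\varepsilon+R_2$, where $Q$ is now taken to be a $G$-invariant, compactly supported order zero operator whose principal symbol agrees with $\tfrac{1+q_x(\xi)}{1+\|\xi\|^2}\,\id_E$ on a neighbourhood of $L'$; this is legitimate since $\id-P^2$ vanishes off $r^{-1}(L')$.

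One then has to see that $Q\pi(\varphi)$ is compact. Writing $Q$, modulo a compactly supported leafwise operator of negative order, as a product of $G$-invariant compactly supported cutoffs $m_K$ (equal to $1$ near $L'$) with $(\id+\Delta)^{-1}+d_G^*(\id+\Delta)^{-1}d_G$, where $\Delta$ is a longitudinal Laplacian with symbol $\|\xi\|^2$, and using $d_G\circ\pi(\varphi)=\pi(d\varphi)$ (Proposition \ref{prop:dG}) together with the fact that $[d_G,m_K]$ is of order zero, the operator $Q\pi(\varphi)$ reduces to compositions of bounded operators with compactly supported leafwise operators of negative order, which are compact operators of $\maE$ by \cite{lauter2000pseudodiff,ConnesSkandalis}. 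Here the compact support of $\id-P^2$, hence of $Q$, is essential, since on the non-compact $U$ negative-order leafwise operators and resolvents are not compact; I expect this to be the main obstacle, and it is precisely why Proposition \ref{prop:ineq:non:compact} is needed. Once $Q\pi(\varphi)$ is compact, conjugating the sandwich inequality by $\pi(\varphi)$, using $[\id-P^2,\pi(\varphi)]=0$ so that $\pi(\varphi)^*(\id-P^2)\pi(\varphi)=(\id-P^2)\pi(\varphi^*\varphi)$, projecting onto the Calkin algebra and letting $\varepsilon\to0$ yields compactness of $(\id-P^2)\pi(\psi)$ for positive $\psi$, and then for all $\varphi$ by the spectral theorem, exactly as in Theorem \ref{index-cycle}.

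Finally, that the class $[\maE,\pi,P]$ depends only on $a=[\sigma]\in\K(F^U_G)$ is checked as in Proposition \ref{G-index}: by Lemma \ref{lem:symbole:T^VU} every class admits a representative of the stated normal form; two such representatives of the same class are connected by a compactly supported stable homotopy, whose quantization is an operator homotopy through Kasparov cycles (the transversal ellipticity estimate being uniform along it), so the induced $\k\k$-classes coincide; additivity is read off from direct sums of cycles; and a symbol induced by a bundle isomorphism over $U$ quantizes, after a compact perturbation, to a degenerate cycle, hence has index $0$ (cf. \cite{Atiyah-Singer:IV}). This produces the group homomorphism $\Ind^{\maF^U}:\K(F^U_G)\to\k\k(C^*G,C^*(U,\maF^U))$. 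The ungraded statement is identical: one starts from a self-adjoint degree one symbol, quantizes it to a self-adjoint $P_0=P$, and the same arguments yield an odd Kasparov cycle and the homomorphism $\Ind^{\maF^U}:\k^1_\mathrm{G}(F^U_G)\to\k\k^1(C^*G,C^*(U,\maF^U))$.
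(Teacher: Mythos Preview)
Your argument follows the paper's approach closely and is essentially correct, but there is one genuine gap. When you invoke Proposition \ref{prop:ineq:non:compact}, the operators $R_1,R_2$ it produces lie only in $\maK_U(\maE)=\{T\in\maL(\maE)\mid Tf,\,fT\in\maK(\maE)\ \forall f\in C_0(U)\}$, \emph{not} in $\maK_{C^*(U,\maF^U)}(\maE)$ itself. On a non-compact $U$ these spaces differ, so your assertion that $R_1,R_2$ are ``compact operators on $\maE$'' is unjustified, and the subsequent Calkin-algebra argument does not go through as written: $\pi(\varphi)^*R_i\pi(\varphi)$ need not be compact.

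The paper repairs this with one extra step that you are missing. One picks a $G$-invariant bump function $\theta\in C_c^\infty(U,[0,1])$ equal to $1$ on $L'$ and observes that $r^*\theta(P^2-\id)r^*\theta=P^2-\id$ (since $P^2-\id$ is supported where $\theta=1$). Multiplying the sandwich inequality on both sides by $r^*\theta$ therefore leaves the middle term unchanged but replaces $R_i$ by $r^*\theta R_i r^*\theta$, which \emph{is} genuinely compact by the very definition of $\maK_U(\maE)$, and replaces $Q$ by $r^*\theta Q r^*\theta$. After this cutoff, your treatment of $Q\pi(\varphi)$ and the rest of the Calkin argument go through exactly as you wrote. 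Your instinct to make $Q$ compactly supported is in the right direction, but it does not by itself force the remainder terms $R_i$ into the compacts; the $\theta$-sandwich is what does that.
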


\begin{proof}
{We freely use notations from Section \ref{Section.Index} and only treat the even case. Since $\sigma$ is bounded on $L$ and assumed to be unitary outside $L$, we get that $\sigma$ is bounded. 
Now notice that $\|\sigma(P)(x,\xi)^2-\id\|\neq 0$ only for $x\in  L$ which implies, using Lemma \ref{lem:Kasparov:eq:def}, that 
\begin{equation}\label{equ:ineg:non:compact:symbol}
 \forall \varepsilon>0, \exists c>0\ \mathrm{such\ that}\  \|\sigma(P)(x,\xi)^2-\id\|\leq c\sigma(Q)(x,\xi)+ \varepsilon,
\end{equation} 
where $\sigma(Q) (x,\xi)= q_x(\xi)$ on $S^*\maF^U$. 
Using Proposition \ref{prop:ineq:non:compact}, we get
\begin{equation}\label{equ:ineg:non:compact:op}
-(c Q+\varepsilon +R_1)\leq P^2-\id\leq c Q+\varepsilon +R_2 \text{  as self-adjoint operators on }\maE
\end{equation}
where $R_i \in \maK_U(\maE)$. Denote  by $\theta\in C^\infty_c(U,[0,1])$ a bump function which equals $1$ on $L{'}$. If $\zeta \in C_c(U,[0,1])$ is any function which equals  $1$ on $L$ and $0$ outside $L^{'}$, we have $\sigma(P)^2(r(\gamma),\xi)-\id = \zeta(r(\gamma))(\sigma(P)^2(r(\gamma),\xi)-\id)$ and $\zeta \theta =\zeta$. 
We use  as usual an oscillatory integral to define the quantization map, see for instance  \cite{ConnesSkandalis}.  More precisely,  the $\maG$-invariant operator $P^2-\id$ is given through its Schwartz kernel, a distribution $k_{P^2-\id}$ on $\maG$  given by an expression of the following type
$$
k_{P^2-\id}(\gamma) = \int_{F^{U^*}_{r(\gamma)}} \chi(\gamma^{-1}) e^{-i\langle \Phi(\gamma^{-1}) ,\xi \rangle} \big( \sigma(P)^{2}(r(\gamma),\xi )-\id \big) d\xi =k_{ (P^2-\id)r^*\zeta}(\gamma)=k_{r^*\zeta  (P^2-\id)}(\gamma), 
$$
where $\Phi$ is a diffeomorphism from a {uniform} neighbourhood $W$ of $U$ in $\maG(U,\maF^U)$ to a neighbourhood of the zero section in ${F^U}$ with $d\Phi=\id$ and $\chi$ is a cut off function with support inside $W$ {which is equal to $1$ in a smaller neighborhood of $U$ whose closure is contained in $W$, see  \cite{ConnesSkandalis} as well as \cite{NWX} or \cite{vassout2006unbounded}. With the appropriate choice of these small neighborhood of the unit manifold $U$ in $\maG(U,\maF^U)$ in coherence with $L'$, we obtain  that}
 $$
 r^*\theta (P^2-\id)=P^2-\id {=(P^2-\id) r^*\theta}.
 $$
Multiplying {on both sides}  the inequality \eqref{equ:ineg:non:compact:op}  by $r^*\theta$, we get
$$
-(c r^*\theta Qr^*\theta+\varepsilon r^*\theta^2 +r^*\theta R_1r^*\theta)\leq P^2-\id\leq c r^*\theta Qr^*\theta +\varepsilon r^*\theta^2 + r^*\theta R_2r^*\theta.
$$ 
Furthermore, modulo  $\maK_{U}(\maE)$, $Q$ can be represented by $d_G^*(1+\Delta)^{-1}d_G$. 
Notice that $
\sigma(d_G^*(1+\Delta)^{-1})(x,\xi)$ 
is bounded, therefore $d_G^*(1+\Delta)^{-1} \in \overline{\Psi^0(U,\maF^U,E)}_U$ and its zero-th order symbol vanishes, so that  $d_G^*(1+\Delta)^{-1} \in \maK_U(\maE)$. {In particular, the operator $r^*\theta d_G^*(1+\Delta)^{-1}$ is compact.} Now we can conclude exactly as in {the proof of} Theorem \ref{index-cycle}. Indeed, recall that we have
$$
d_G(r^*\theta \pi(\varphi)\eta)(\gamma)=\sum \frac{\partial \theta}{\partial V_k}(r(\gamma)) \pi(\varphi)\eta \otimes v_k + \theta (r(\gamma)) \pi(d\varphi) \eta(\gamma),
$$
{and since $d_G r^*\theta \pi(\varphi)$ is bounded  we deduce that $r^*\theta d_G^*(1+\Delta)^{-1}d_G r^*\theta \pi(\varphi)$ is compact.} Moreover, the operators $r^*\theta R_ir^*\theta$ are also compact since each $R_i \in \maK_U(\maE)$. {Now for any non-negative  $\psi=\varphi^* \varphi\in C^*G$,}
$$
-\pi(\varphi)^*(c r^*\theta Qr^*\theta+\varepsilon r^*\theta^2 +r^*\theta R_1 r^*\theta)\pi(\varphi)\leq \pi(\varphi)^*(P^2-\id) \pi(\varphi)\leq \pi(\varphi)^*(c r^*\theta Qr^*\theta+\varepsilon r^*\theta^2 + r^*\theta R_2r^*\theta)\pi(\varphi).
$$
{Since $P$ is $G$-invariant, this can be rewritten as}
$$
-\pi(\varphi)^*(c r^*\theta Qr^*\theta+\varepsilon r^*\theta^2 +r^*\theta R_1r^*\theta)\pi(\varphi)\leq (P^2-\id) \pi(\psi)\leq \pi(\varphi)^*(c \theta Qr^*\theta+\varepsilon r^*\theta^2 + r^*\theta R_2r^*\theta)\pi(\varphi).
$$
Therefore, projecting in the Calkin algebra and letting $\varepsilon \to 0$, we
{deduce that $(P^2-\id)\circ\pi(\psi)$ is compact. This implies, using the spectral theorem, that $(P^2-\id)\circ\pi(\varphi)$ is compact for any $\varphi\in C^*G$.}
}
\end{proof}

\medskip

Assume now that there exists an open foliated $G$-embedding $j: (U, \maF^U) \hookrightarrow (M, \maF)$ of smooth foliated manifolds. This means that $j$ is a $G$-equivariant embedding of $U$ as an open submanifold of the foliated $G$-manifold $M$ which transports the foliation $\maF^U$ into the restriction of the foliation $\maF$ to the open submanifold $j(U)$. We assume again that $G$ acts on all foliations by holonomy diffeomorphisms. We shall mainly be interested in the present paper in the case $M$ compact, but this is not needed so far. The embedding $j$ then induces an open embedding at the level of holonomy groupoids that we still denote by $j$ for simplicity, i.e. $
j\, : \, \maG (U, \maF^U) \hookrightarrow \maG (M, \maF)=\maG$. The $C^*$-algebra $C^* (U, \maF^U)$ is hence isomorphic to the $C^*$-algebra of the foliation of $j(U)$ induced by $\maF$, but this latter can be seen as a $C^*$-subalgebra of $C^*(M, \maF)$ in an obvious way. We therefore end-up with a well defined class $j_!\in \k\k (C^*(U, \maF^U), C^*(M, \maF))$. In the notations of \cite{ConnesSkandalis}, the map  $j: U\to M$ induces in particular a  submersion  from $U$ to $M/\maF$, and we therefore have a well defined  Morita extension Kasparov class which is exactly the class $j_!$, but the construction is simpler in our open embedding case.  Finally, the differential $dj:F^U\to F^M=F$ of the $G$-embedding $j$ restricts to an open $G$-map sending the space $F^U_G$ in the space $F^M_G= F_G$, therefore and by functoriality, we get an $R(G)$-morphism
$$
j_* \, : \, \k_\mathrm{G}^i(F^U_G) \longrightarrow \k_\mathrm{G}^i (F_G).
$$
We are now in position to state the main theorem of this subsection.

\medskip

{
\begin{thm}\label{thm:excision}
Under the above assumptions {and for any $i\in \Z_2$, the following diagram commutes: 
$$\xymatrix{\k_\mathrm{G}^i(F_G^U) \ar[r]^{j_*} \ar[d]^{\mathrm{Ind}^{\maF^U}} & \k_\mathrm{G}^i(F_G)\ar[d]^\ind \\
\k\k^i(C^*G,C^*(U,\maF^U)) \ar[r]_{j_!} &\k\k^i(C^*G,C^*(M,\maF)).
} $$}
\end{thm}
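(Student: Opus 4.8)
The plan is to exhibit the class $\ind(j_*a)$ by a Kasparov cycle which is ``compactly supported over $j(U)$'', and to recognise that cycle as the Kasparov product of the cycle computing $\Ind^{\maF^U}(a)$ with $j_!$. First I would choose a convenient representative: by Lemma~\ref{lem:symbole:T^VU} the class $a\in\k^i_\mathrm{G}(F^U_G)$ is represented by a $G$-equivariant symbol $\sigma$ on $F^U$ which is the identity (modulo trivialisations of $E^\pm$) outside a compact $G$-subspace $L\subset U$; quantising exactly as in Proposition~\ref{Index-intrinseque} yields a uniformly supported $G$-invariant operator $P_0^U$ equal to the unitary bundle morphism $\psi=(\psi^-)^{-1}\psi^+$ off a compact $L'\subset U$ containing $L$ in its interior, and $\Ind^{\maF^U}(a)=[\maE^U,\pi^U,P^U]$. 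The pushforward $j_*a$ is then represented by the symbol on $F_G$ obtained from $\sigma$ by extension by the identity over $F_G\smallsetminus dj(F^U_G)$ (licit since $\sigma$ is already trivial near $\partial j(U)$), and the operator $P_0^M$ on $(M,\maF)$ equal to $j_*P_0^U$ over $\maG(j(U),\maF)$ and to $\psi$ elsewhere is a uniformly supported $G$-invariant quantisation of it. Hence, by Proposition~\ref{Index-intrinseque}, $\ind(j_*a)=[\maE^M,\pi^M,P^M]$.

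Second, I would record that the open $G$-embedding $j$ makes $\maG(U,\maF^U)$ an open subgroupoid of $\maG=\maG(M,\maF)$, so that $j_!\in\k\k(C^*(U,\maF^U),C^*(M,\maF))$ is represented by the cycle $(\maE_j,\mu_j,0)$, where $\maE_j$ is the $C^*(M,\maF)$-module completion of $C_c(\maG^{j(U)})$, with $\maG^{j(U)}:=\{\gamma\in\maG\,:\,r(\gamma)\in j(U)\}$, and where $\mu_j$ and the right action are both given by convolution. Since the operator component of $j_!$ vanishes, no connection term is needed and the Kasparov product $[\maE^U,\pi^U,P^U]\otimes_{C^*(U,\maF^U)}j_!$ is represented by $\bigl(\maE^U\otimes_{C^*(U,\maF^U)}\maE_j,\ \pi^U\otimes 1,\ P^U\otimes 1\bigr)$.

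The heart of the argument is then to build, in complete analogy with the unitary $\maV$ appearing in the proof of Theorem~\ref{thm:action:libre}, an explicit $G$-equivariant unitary
$$
\maV:\ \maE^U\otimes_{C^*(U,\maF^U)}\maE_j\ \longrightarrow\ \maE^M,\qquad \maV(\eta\otimes\xi)(\gamma)=\int_{\maG(j(U),\maF)^{r(\gamma)}}\eta(\gamma_1)\,\xi(\gamma_1^{-1}\gamma)\,d\nu^{r(\gamma)}(\gamma_1),
$$
whose range is the closed submodule $\maE^M_U:=\overline{C_c(\maG^{j(U)},r^*E^M)}$, and to check that $\maV$ intertwines $\pi^U\otimes 1$ with $\pi^M|_{\maE^M_U}$ and $P^U\otimes 1$ with $P^M|_{\maE^M_U}$ --- using that the $C^*G$-action goes through the holonomy action (Corollary~\ref{cor:G:C(M,F):interne}, Proposition~\ref{prop:pi*rep}), that $j(U)$ is $G$-invariant, and that $P_0^M$ agrees with $j_*P_0^U$ over $\maG(j(U),\maF)$ and with $\psi$ elsewhere, so that $\pi^M$, $P^M$ and the unitaries $U_g$ all preserve the condition $r(\gamma)\in j(U)$. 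It then remains to pass from $[\maE^M_U,\pi^M,P^M]$ to $[\maE^M,\pi^M,P^M]$: the quotient $\maE^M/\maE^M_U$ is, fibrewise, the section space over $\{r(\gamma)\notin j(U)\}$, on which $P^M$ restricts to the bundle unitary $\left(\begin{smallmatrix}0&\psi^*\\ \psi&0\end{smallmatrix}\right)$ (since $P_0^M$ equals $\psi$ off the compact set $j(L')\subset j(U)$), so the quotient cycle is degenerate and adding it back does not alter the $\k\k$-class. Putting these together gives $\ind(j_*a)=[\maE^M,\pi^M,P^M]=[\maE^M_U,\pi^M,P^M]=[\maE^U,\pi^U,P^U]\otimes_{C^*(U,\maF^U)}j_!=\Ind^{\maF^U}(a)\otimes_{C^*(U,\maF^U)}j_!$, which is the commutativity of the diagram; the ungraded case is identical.

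I expect the main obstacle to be precisely that last step, because the submodule $\maE^M_U$ is in general not orthocomplemented in $\maE^M$ (much as $C_0([0,1))$ is not complemented in $C([0,1])$), so one cannot simply split off a degenerate summand. The way around this is the compact-support mechanism already exploited in the non-foliated and leafwise-elliptic excision theorems of \cite{Atiyah-Singer:I,atiyah1974elliptic,ConnesSkandalis}: choosing $\theta\in C_c(j(U),[0,1])$ with $\theta\equiv 1$ near $L'$ one has $[\pi^M(\varphi),P^M]=0$ (Lemma~\ref{lem:Ppi=piP}) and $\id-(P^M)^2=r^*\theta\,(\id-(P^M)^2)\,r^*\theta$, so the whole cycle is controlled by the multiplier $r^*\theta$, which is ``supported in $j(U)$''; a short homotopy shrinking that support then identifies $[\maE^M,\pi^M,P^M]$ with its restriction to $\maE^M_U$, exactly as in the classical case. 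Verifying that $\maV$, this homotopy and all the intertwinings can be chosen $G$-equivariant is then routine, since every ingredient (the holonomy elements $\theta^g$, the action $\Phi$, the unitaries $U_g$) is $G$-natural by construction.
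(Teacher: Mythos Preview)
Your proposal is correct and follows essentially the same route as the paper: represent $a$ by a symbol trivial off a compact $L\subset U$, quantise and extend trivially to $(M,\maF)$, build the convolution isometry $\maV$ onto $C^*(\maG^{j(U)},r^*E)$, and then identify the two Kasparov cycles. The one place where the paper is sharper is precisely the step you flagged as the main obstacle: rather than an ad-hoc ``homotopy shrinking the support'', the paper invokes the Connes--Skandalis \emph{support lemma} (quoted here as Lemma~\ref{lem:support}), which says that any Kasparov cycle $(\maE,\pi,F)$ has the same class as its restriction to the submodule generated by $\maK_1\maE$, where $\maK_1$ is the $C^*$-algebra generated by $[\pi(a),F]$, $\pi(a)(F^2-1)$, $\pi(a)(F-F^*)$ and their $A$-, $F$-, $F^*$-multiples. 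Your key observation $(\id-(P^M)^2)=r^*\theta\,(\id-(P^M)^2)\,r^*\theta$ (together with $[\pi^M(\varphi),P^M]=0$) then shows that the supports of $(\maE^M,\pi^M,P^M)$ and of $(C^*(\maG^{j(U)},r^*E),\pi^M,P^M|)$ coincide, so the support lemma finishes the proof cleanly without ever needing $\maE^M_U$ to be orthocomplemented.
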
}

\medskip

\begin{remarque}
If the action of $G$ on $(U, \maF^U)$ is not necessarily a holonomy action while it is a holonomy action on $(M, \maF)$, then the index morphism $\mathrm{Ind}^{\maF^U}$ is not well defined anymore, and one can use Theorem \ref{thm:excision} precisely to define it for any given such embedding, as a class in $\k\k^i(C^*G,C^*(M,\maF))$  with the usual compatibility with embeddings.
\end{remarque}

{
Recall first the notion of support of a Kasparov $(A, B)$-cycle $(\mathcal{E},\pi ,F)$, for given separable $C^*$-algebras $A$ and $B$, as introduced in \cite[Appendix A]{ConnesSkandalis}. This is the Hilbert submodule of $\maE$ which is generated by $\mathcal{K}_1\mathcal{E}$, with $\mathcal{K}_1$ the $C^{*}$-subalgebra of the $C^*$-algebra $\mathcal{K}(\mathcal{E})$ of compact operators on the Hilbert $B$-module $\maE$, which is generated by the operators $[\pi(a),F]$ , $\pi(a)(F^2-1)$ and $\pi(a)(F-F^*)$ and their multiples   by $A$, $F$ and $F^*$. Here $a$ runs over $A$ of course. Then obviously $\maE_1$ is a Hilbert $(A, B)$-bimodule and $F$ restricts automatically to $\maE_1$ to yield the operator $F_1$ so that $(\mathcal{E}_1, \pi, F_1)$  is again a Kasparov $(A, B)$-cycle. We quote the following interesting observation from \cite{ConnesSkandalis} which will be used in the sequel.
\begin{lem}\cite{ConnesSkandalis}\label{lem:support}
{Let $(\mathcal{E},\pi ,F)$ be a Kasparov $(A,B)$-cycle where $A$ and $B$ are given separable $C^*$-algebras. Let $(\mathcal{E}_1, \pi, F_1)$ be the  Kasparov $(A, B)$-cycle obtained by restricting to the support $\maE_1$. Then $(\mathcal{E}_1,\pi,F_1)$ defines the same $\k\k$-class, i.e. 
$$
\left[(\mathcal{E}_1,\pi,F_1)\right]  =  \left[(\mathcal{E},\pi ,F)\right]  \in \k\k (A, B).
$$}
\end{lem}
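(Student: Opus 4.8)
The plan is to split $(\maE,\pi,F)$ into the part living on the support $\maE_1$ and a complementary part on which the cycle is degenerate, and then to invoke that degenerate cycles vanish in $\k\k(A,B)$ together with additivity of $\k\k$-classes under orthogonal direct sums of cycles.

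First I would record the structural properties of $\maE_1$. By its very description $\maK_1$ is a $C^*$-subalgebra of $\maK(\maE)$ which is stable under left and right multiplication by $\pi(A)$, $F$ and $F^*$; hence $\maE_1=\overline{\maK_1\maE}$ is a closed $B$-submodule of $\maE$ that is invariant under $\pi(A)$, $F$ and $F^*$. The crucial point, which is exactly what one borrows from \cite[Appendix A]{ConnesSkandalis}, is that $\maE_1$ is moreover orthocomplemented in $\maE$, with orthogonal complement $\maE_1^{\perp}=\{x\in\maE:\ \maK_1x=0\}$; the orthogonal projection $p\in\maL(\maE)$ onto $\maE_1$ arises as the strict limit of an approximate unit $(u_\lambda)$ of $\maK_1$, which is legitimate precisely because $\maK_1$ consists of compact operators on $\maE$. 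Stability of $\maK_1$ under the three multiplications then forces $\maE_1^{\perp}$ to be invariant under $\pi(A)$, $F$ and $F^*$ as well, so $p$ commutes with $\pi(A)$, $F$ and $F^*$.

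Granting this, $(\maE,\pi,F)$ decomposes as the orthogonal direct sum of the two restricted triples $(\maE_1,\pi_1,F_1)$ and $(\maE_1^{\perp},\pi^{\perp},F^{\perp})$. Both are again Kasparov $(A,B)$-cycles, since restricting a compact operator on $\maE$ to the orthocomplemented submodules $p\maE$ and $(1-p)\maE$ keeps it compact, and in the even case all of $\maK_1$, $\maE_1$ and $p$ are homogeneous for the $\Z_2$-grading, so the grading is preserved. Next I would observe that $(\maE_1^{\perp},\pi^{\perp},F^{\perp})$ is degenerate: for every $a\in A$ the operators $[\pi^{\perp}(a),F^{\perp}]$, $\pi^{\perp}(a)\big((F^{\perp})^2-1\big)$ and $\pi^{\perp}(a)\big(F^{\perp}-(F^{\perp})^*\big)$ are the restrictions to $\maE_1^{\perp}$ of generators of $\maK_1$, and $\maK_1$ annihilates $\maE_1^{\perp}$ by the description of $\maE_1^{\perp}$. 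A degenerate cycle represents $0$ in $\k\k(A,B)$, and $\k\k$-classes add over orthogonal direct sums, so $[(\maE,\pi,F)]=[(\maE_1,\pi_1,F_1)]+[(\maE_1^{\perp},\pi^{\perp},F^{\perp})]=[(\maE_1,\pi_1,F_1)]$; the ungraded case is identical with $\k\k$ replaced by $\k\k^1$.

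The genuine obstacle is the orthocomplementation of $\maE_1$ used in the second step: a closed submodule of a Hilbert $C^*$-module need not admit an orthogonal complement, so one must really exploit that $\maE_1$ is the closed linear span of $\maK_1\maE$ for a $C^*$-algebra $\maK_1\subseteq\maK(\maE)$, equivalently that the approximate unit of $\maK_1$ converges strictly to a projection in $\maL(\maE)$. Once that structural input is available, the remaining verifications — invariance of $\maE_1$ and $\maE_1^{\perp}$, compactness of the restricted defect operators, degeneracy of the complementary cycle — are routine bookkeeping.
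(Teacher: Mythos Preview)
Your argument is correct and is exactly the standard proof from \cite[Appendix A]{ConnesSkandalis}: orthocomplement $\maE_1$ via a strictly convergent approximate unit of $\maK_1\subset\maK(\maE)$, observe that the complementary summand is degenerate, and conclude by additivity. The paper does not supply its own proof of this lemma; it merely quotes the statement from Connes--Skandalis and uses it as a black box in the proof of the excision theorem, so there is nothing further to compare.
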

}

\medskip

\begin{proof}[Proof of Theorem \ref{thm:excision}]\
{{We concentrate again on the even case $i=0$.} Let $a \in \k_G (F_G^U)$ be fixed. We denote as before by $\pi_U : F^U \rightarrow U$ and by $\pi_M:F\to M$ the respective bundle projections. We start by representing $a$ by a symbol of order $0$ on $F^U$ according to Lemma \ref{lem:symbole:T^VU}:
$$
\pi_U^{*}E^+ \stackrel{\sigma}{\longrightarrow}  \pi_U^{*}E^-, 
$$ 
which is thus trivial outside a compact set $L$ of $U$. By using   the trivializations $\psi^\pm$, a standard argument allows to extend the hermitian bundles $E^\pm$ viewed over $j(U)$ to hermitian $G$-equivariant vector bundles $j_*E^\pm$ over $M$ with the obvious extension $j_*\sigma$ {so that $(j_*E^+, j_*E^-, j_*\sigma)$  represents the push-forward class $j_*a$}, see for instance \cite{Atiyah-Singer:I,atiyah1974elliptic}. The bundle trivilizations $\psi^\pm$ then give the extended bundle isomorphisms, still denoted $\psi^\pm$, over $M\smallsetminus j(L)$. Associated with the hermitian $G$-bundles $j_*E^\pm$, we then obtain  the corresponding Hilbert $G$-modules  over the $C^*$-algebra $C^*(M, \maF)$ that we denote by $j_*\mathcal{E}^\pm$. Recall that $j$ induces as well a $*$-homomorphism 
$$
j_*\,:\, C^*(U, \maF^U) \longrightarrow C^*(M, \maF),
$$
which allows to represent $C^*(U, \maF^U)$ as adjointable operators on $C^*(M, \maF)$ when this latter is viewed as a Hilbert module over itself. We can therefore consider the $\Z_2$-graded Hilbert $G$-module over $C^*(M, \maF)$, obtained by composition, and denoted as usual $\maE\underset{C^*(U, \maF^U)}{\otimes} C^*(M, \maF)$. This latter Hilbert $G$-module can be identified with a Hilbert $G$-submodule of $j_*\maE$, i.e. there is a Hilbert module {isometry}
$$
V\, : \, \maE\underset{C^*(U, \maF^U)}{\otimes} C^*(M, \maF)\longrightarrow j_*\maE,
$$
{which is given for {$\eta \in C_c(\maG(U,\maF^U), r^*E)$ and $f\in C_c(\maG)$ by $V(\eta \otimes f) = \tilde{\eta}\cdot f$, that is, the convolution of $\tilde{\eta}$, the extension by $0$ of $\eta$ outside $\maG(U,\maF^U)$, with $f$, i.e.
$$
V(\eta \otimes f) (\gamma) := \int_{\maG^{r(\gamma)}} \tilde\eta (\gamma_1) f(\gamma_1^{-1}\gamma) d\nu^{r(\gamma)} (\gamma_1).
$$}
We identify for simplicity $U$ with $j(U)$ for the rest of this proof.}
{{The Hilbert submodule 
$$
V\left(\maE\underset{C^*(U, \maF^U)}{\otimes} C^*(M, \maF)\right)
$$ 
can be identified with  the completion $C^*(\maG^{U},r^*E)$   of $C_c(\maG^{U} , r^*E)$ in $j_*\maE$, where $\maG^{U}$ is the space of elements of $\maG$ which end inside $U$. See \cite[Proposition 4.3]{ConnesSkandalis}. To finish the proof, we only need to compare the supports of the two Kasparov cycles, and to apply Lemma \ref{lem:support}.} 
}
}

{We  choose  a {uniformly} supported $G$-invariant leafwise pseudodifferential operator $P_0$ on $(U, \maF^U)$ with symbol $\sigma$ as in the above construction of the index class on $(U, \maF^U)$. So,  $P_0$ can be seen as a $\maG(U, \maF^U)$-operator in the sense of \cite{Connes:integration:non:commutative} that we denote again by 
$$
P_0\, : \, C^\infty_c (\maG (U, \maF^U), r^*E) \longrightarrow C^\infty_c(\maG(U, \maF^U),r^*E),
$$ 
which acts along the fibers of the groupoid and is an invariant family $(P_{0,x})_{x\in U}$. Here of course we assume, as we did in the construction of the index class,  that $P_0$ is the identity outside some compact subspace  $L'$ of $U$, modulo the trivializations $\psi^\pm$. For simplicity of notations, this operator is also the one over $j(U)$ with its foliation induced from $\maF$. In order to quantize the pushforward class $j_*a$, we can then consider  the uniformly supported $G$-invariant leafwise operator on $M$ defined as follows.}

{Let $\theta \in C_c^{\infty }(M, [0,1])$ be some $G$-invariant bump function which is equal to $1$ on $L'$, and whose support is a compact subspace of $j(U)$ outside of which the operator $P_0$ is trivial.  Denote by $\psi_r^\pm$ the isomorphisms $\psi^\pm$ viewed between the bundles $r^*E^\pm$ and which are only well defined over $r^{-1} (M\smallsetminus j(L))$.  Then $j_*P_0$ can be taken as the  $\maG$-operator on $(M, \maF)$ defined by
$$
j_*P_0\, := \, \tilde{P}_0 \; r^*\theta + (\psi^-_{r})^{-1}\circ \psi^+_{r} \; (1-r^*\theta).
$$ 
We use here the same cut-off function used to extend $\sigma$ to $F$. Hence $j_*P_0$ is obviously a zero-th order  leafwise $\maG$-operator which is $G$-invariant and has the principal symbol equal to {$j_*\sigma=\sigma \; \theta + (\psi^-)^{-1}\circ \psi^+ \; (1-\theta)$} and so represents $j_*a$. Recall that the index class $\Ind^{\maF^U} (a)$ is represented by the adjointable extension of the operator 
$P=\begin{pmatrix}
0&P_0^*\\
P_0& 0
\end{pmatrix}$
acting on the Hilbert module $\maE$, while the class $\Ind (j_*a)$ can obviously be represented by the adjointable extension of the operator 
$j_*P=\begin{pmatrix}
0&j_*P_0^*\\
j_*P_0& 0
\end{pmatrix}$
acting on the Hilbert module $j_*\maE$.} 

{{Notice  that $\mathrm{Ind}^{\maF^U}(a) \otimes j_! =\left[(\maE \underset{C^*(U,\maF^U)}{\otimes} C^*(M,\maF) , \pi \otimes 1 , P\otimes 1)\right]$ and using the  isometry $V$ defined above we deduce that the Kasparov cycle $(\maE \underset{C^*(U,\maF^U)}{\otimes} C^*(M,\maF) , \pi \otimes 1 , P\otimes 1)$  is unitarily equivalent to the cycle $[C^*(\maG^U ,r^*E) ,\pi , j_*P_{|_{C^*(\maG^U ,r^*E)}}]$. Indeed,  the representations of the $C^*$-algebra $C^*G$ are clearly compatible, and we have}}
$$
V(P\eta \otimes f) = \widetilde{P\eta}\cdot f= \tilde{P} \tilde{\eta} \otimes f =\tilde{P}(\tilde{\eta}\cdot f) = j_*P_{|_{C^*(\maG^U ,r^*E)}}V(\eta \otimes f),
$$
 with $\tilde{P}$ being as before the $\maG$-operator obtain from $P$ by extending trivially  its distributional kernel.
{To complete the proof, thanks to the {Connes-Skandalis Lemma \ref{lem:support}}, we only need to show that the supports of $\ind(j_*a)$ and $[C^*(\maG^U ,r^*E) ,\pi , j_*P_{|_{C^*(\maG^U ,r^*E)}}]$ are the same. But using the cut off function $\theta$ which is supported in $U$, we can write 
$$
((j_*P)^2-\id)r^*\theta = (j_*P)^2-\id
$$ 
and the same equality is true when replacing $j_*P$ by $j_*P_{|_{C^*(\maG^U ,r^*E)}}$ and $\theta $ by $\theta_{|_U}$. Therefore the supports do coincide as allowed. 
}

\end{proof}

\subsection{Multiplicativity of the index morphism}\label{Multiplicative}

Recall that $G$ is a compact Lie group. Let $M$ and $M'$ be two smooth closed manifolds endowed with smooth foliations that we denote respectively by $\maF$ and $\maF'$. We assume that $G$ acts by holonomy diffeomorphisms on $(M, \maF)$ and on $(M', \maF')$. We assume in addition that another compact Lie group $H$ acts on the first manifold $M$ {also by holonomy diffeomorphisms}, and that the actions of $G$ and $H$ commute. So said differently,  the compact Lie group $G\times H$ acts by holonomy diffeomorphisms which are isometries (for the ambiant manifold metric)  on $(M, \maF)$ and $(M', \maF')$ and we assume that the action of $H$ on the second manifold $M'$ is trivial. Recall that in this situation the compact Lie groups $G$ and $H$ act by inner automorphisms on the Connes' $C^*$-algebras of the foliations $(M, \maF)$ and $(M', \maF')$. We thus get for instance the following $C^*$-algebra isomorphism which will be used later on (see Corollary \ref{cor:G:C(M,F):interne})
$$
\Psi : C^*(M,\mathcal{F})\rtimes G \longrightarrow C^*(M,\mathcal{F}) \otimes C^*G,
$$
and which is induced by the map $C(G, C^*(M, \maF))\to C(G, C^*(M, \maF))$ given for $f\in C(G, C^*(M, \maF))$ and  $g\in G$ by $\Psi (f) (g) := f(g)U_g$.
This isomorphism allows indeed to replace, the crossed product $C^*$-algebra $C^*(M,\mathcal{F})\rtimes G$ by the tensor product $C^*$-algebra $C^*(M,\mathcal{F}) \otimes C^*G$. We denote by $[\Psi]\in \k\k(C^*(M,\mathcal{F})\rtimes G),C^*(M,\mathcal{F})\otimes C^*G)$ the induced $\k\k$-equivalence. {This element is {just the Kasparov descent of the $KK_G$-equivalence between $C^*(M,\maF)$ endowed with the inner action and $C^*(M,\maF)$ endowed with the trivial action.}}


Recall  the Kasparov descent map \cite{Kasparov1988} for given $G$-$C^*$-algebras $A$ and $B$. Let $\mathcal{E}$ be a Hilbert $G$-module on $B$. Define a right prehilbertian $C(G, B)$-module structure on the space $C(G, \maE)$ of continuous $\maE$-valued functions on $G$, by setting
$$
e\cdot d(s)=\int_G e(t)\, t d(t^{-1}s)\, dt \quad \mathrm{~and~} \quad \langle e_1,e_2\rangle  (s)=\int_G t^{-1}(\langle e_1(t),e_2(ts)\rangle  _{\mathcal{E}})dt,
$$
for  $e$, $e_1$, $e_2 \in C(G,\mathcal{E})$ and $d\in C(G,B)$.  Then the completion of $C(G,\mathcal{E})$ with respect to this Hilbert structure defines by classical arguments, a $B\rtimes G$-Hilbert module that we denote by $\maE\rtimes G$. If $\pi : A \rightarrow \mathcal{L}_B(\mathcal{E})$ is a $G$-equivariant $\ast$-morphism from $A$ to the $C^*$-algebra of adjointable operators on $\maE$ then the map 
$$
\pi\rtimes G  :A\rtimes G \rightarrow \mathcal{L}_{B\rtimes G}(\mathcal{E}\rtimes G)\text{ given by }(\pi \rtimes G(a)e)(t)=\int_G\pi(a(s)) s(e(s^{-1}t))ds
$$ 
is a $\ast$-morphism. Finally, if $T\in \mathcal{L}_B(\mathcal{E})$ then $T$ induces an operator $T\rtimes G\in \mathcal{L}_{B\rtimes G}(\mathcal{E}\rtimes G)$ defined by $(T\rtimes G)e(s):=T(e(s))$. It was then proved in  \cite{Kasparov1988} that if $(\mathcal{E},\pi ,T)$ is an $(A, B)$ Kasparov cycle, then  $(\mathcal{E}\rtimes G,\pi\rtimes G,T\rtimes G)$ is an $(A\rtimes G, B\rtimes G)$ Kasparov cycle and that this induces a well defined group morphism at the level of $\k\k$-theory. More precisely, \\

\begin{defi}\cite{Kasparov1988}\
For $i\in \Z_2$, the Kasparov descent map for the given $G$-algebras $A$ and $B$ is  the well defined induced map 
$$
j^G : \k\k_\mathrm{G}^i(A, B) \longrightarrow \k\k^i (A\rtimes G,B\rtimes G)\text{  is given by }[(\mathcal{E},\pi ,T)]\longmapsto [(\mathcal{E}\rtimes G,\pi \rtimes G,T\rtimes G)].
$$
\end{defi}

\medskip

Back to our foliations, recall from {Proposition \ref{HequivariantIndexClass}} the well defined $G$-equivariant index map for $G\times H$-invariant leafwise symbols on $(M, \maF)$ which are $H$-transversally elliptic along the leaves, i.e.
\begin{equation}\label{Gindex}\
\mathrm{Ind}^{\maF,G}: \mathrm{K^i_{G\times H}} (F_H) \longrightarrow \k\k_\mathrm{G}^i (C^*H, C^*(M, \maF)).
\end{equation}
If we compose this index map with the Kasparov descent map for the $G$-algebras $C^*H$ (with trivial $G$-action) and $C^*(M, \maF)$ {for the standard action induced from the $G$-action along the leaves, and further use} the isomorphism $\Psi$, then we end up with an index map
$$
\mathrm{\widehat{Ind}}^{\maF,G}: \mathrm{K^i_{G\times H}} (F_H) \longrightarrow  \k\k^i (C^*(H\times G), C^*(M, \maF)\otimes C^*G).
$$

\begin{remarque}
Since $G$ acts by holonomy diffeomorphisms here, we can recast the representative of the index class given by Equation \eqref{Gindex} so that it rather represents a $G$-equivariant class for the trivial $G$-action on $C^*(M, \maF)$. If we denote by $\k\k^i_{G^{\rm{trivial}}}(C^*H, C^*(M, \maF))$ the equivariant Kasparov group for the trivial $G$-action, then this yields an index morphism
\begin{equation}\label{TrivialGindex}
\mathrm{Ind}^{\maF,G^{\rm{trivial}}}: \mathrm{K^i_{G\times H}} (F_H) \longrightarrow \k\k^i_{\mathrm{G}^{\rm{trivial}}}(C^*H, C^*(M, \maF)).
\end{equation}
\end{remarque}

\begin{lem}\label{Compatibility}
Denote by $j^{G^{\rm{trivial}}}:\k\k^i_{G^{\rm{trivial}}}(C^*H, C^*(M, \maF)) \to \k\k^i (C^*(H\times G), C^*(M, \maF)\otimes C^*G)$ the Kasparov descent morphism for the trivial $G$-action, then the following relation holds:
$$
\mathrm{\widehat{Ind}}^{\maF,G} \, = \, j^{G^{\rm{trivial}}}\circ \mathrm{Ind}^{\maF,G^{\rm{trivial}}}.
$$
\end{lem}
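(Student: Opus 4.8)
The plan is to reduce the identity to a unitary equivalence of Kasparov cycles, exploiting that the $G$-action on $C^*(M,\maF)$ is inner. As a first step I would recall, following the Remark preceding the lemma, that for a fixed class $a$ the classes $\mathrm{Ind}^{\maF,G}(a)$ and $\mathrm{Ind}^{\maF,G^{\rm{trivial}}}(a)$ are represented by one and the same triple $(\maE,\pi,P)$ over $(C^*H,C^*(M,\maF))$ and differ only in their $G$-equivariant structure. In the ``conjugation'' picture, $C^*(M,\maF)$ carries its standard $G$-action, which by Corollary \ref{cor:G:C(M,F):interne} is inner and implemented by the unitary multipliers $(U^G_g)_{g\in G}$, while $\maE$ carries the naive action $V^G=(V^G_g)$ induced from the $G$-actions on $\maG$ and on $E$; in the ``trivial'' picture, $C^*(M,\maF)$ carries the trivial $G$-action and $\maE$ carries the holonomy action $U^G=(U^G_g)$, which is a $G$-equivariant Hilbert module structure over $(C^*(M,\maF),\mathrm{triv})$ by Lemma \ref{lem:unitary:U}. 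In both pictures $\pi$ remains $G$-equivariant (for $C^*H$ with trivial $G$-action) because the $G$- and $H$-actions, hence the families $U^G$ and $U^H$, commute — a consequence of the commutation of the holonomy maps $\theta^g$ and $\theta^h$ implied by the hypothesis that $G\times H$ acts by holonomy diffeomorphisms.

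Next I would unwind both sides of the equality. By definition $\mathrm{\widehat{Ind}}^{\maF,G}(a)$ is obtained from the conjugation cycle by first applying the descent map $j^G$, which lands in $\k\k^i(C^*H\rtimes G,\,C^*(M,\maF)\rtimes G)=\k\k^i(C^*(H\times G),\,C^*(M,\maF)\rtimes G)$ since $G$ acts trivially on $C^*H$, and then by composing with the $\k\k$-equivalence $[\Psi]$ that identifies $C^*(M,\maF)\rtimes G$ with $C^*(M,\maF)\otimes C^*G$. On the other side, because both $C^*H$ and $C^*(M,\maF)$ carry trivial $G$-actions, $j^{G^{\rm{trivial}}}$ applied to the trivial cycle lands directly in $\k\k^i(C^*(H\times G),\,C^*(M,\maF)\otimes C^*G)$, the two crossed products reducing to the ordinary tensor products $C^*(H\times G)$ and $C^*(M,\maF)\otimes C^*G$. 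Since the symbol class $a$ enters only through the choice of representative, it suffices to show that the two resulting cycles over $(C^*(H\times G),\,C^*(M,\maF)\otimes C^*G)$ are unitarily equivalent Kasparov cycles.

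The heart of the matter is to produce this unitary, which is the Hilbert-module incarnation of $\Psi$. Both descent Hilbert modules are completions of $C(G,\maE)$: one for the ``$V^G$-twisted'' inner product and module structure over $C^*(M,\maF)\rtimes_{\,\mathrm{Ad}\,U^G}G$, the other for the untwisted ones over $C^*(M,\maF)\otimes C^*G$. The intertwiner $W$ between them is built from the relation that the naive and holonomy $G$-actions on $\maE$ differ by right multiplication by the unitary multiplier $U^G_g$ of $C^*(M,\maF)$ — equivalently, from the relation $V^G_g\,P\,V^G_{g^{-1}}=U^G_g\,P\,U^G_{g^{-1}}$ recalled in Section \ref{C(B):module} together with Corollary \ref{cor:G:C(M,F):interne} — which is in turn a direct computation from the holonomy identities $g\,\theta^k(x)=\theta^{gkg^{-1}}(gx)$ and $\theta^k(gx)\,\theta^g(x)=\theta^{kg}(x)$ recorded there. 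One then checks that $W$ is a well-defined $C^*(M,\maF)\otimes C^*G$-linear unitary of Hilbert modules, that it intertwines the two copies of $\pi\rtimes G$ (using that $\pi$ commutes with every $U^G_g$), and that it intertwines $P\rtimes G$ with $P\rtimes G$ (using that $P$ is $G$-invariant and $C^*(M,\maF)$-linear, so that it commutes with all the operators entering the definition of $W$, while $P\rtimes G$ acts fibrewise over $G$).

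I expect the main obstacle to be the inner-product computation: checking that $W$ carries the $V^G$-twisted $C^*(M,\maF)\rtimes G$-valued inner product on $C(G,\maE)$ to the untwisted $C^*(M,\maF)\otimes C^*G$-valued one, i.e. that conjugation by $U^G_t$ on the coefficient algebra corresponds under $W$ precisely to the naive action $V^G_t$ on the module. This is exactly the fact that makes $\Psi$ a $*$-homomorphism, and in the present groupoid setting it reduces to the compatibility relations of the holonomy maps $\theta^g$; it is the step the surrounding text describes as ``an exact rephrasing of the same proof for a single operator''. Once it is in place, the $G$-equivariance of $\pi$ and the invariance of $P$ are routine, and the associativity and functoriality of the descent map, of $[\Psi]$ and of the index construction yield the claimed equality of morphisms.
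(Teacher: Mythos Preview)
Your proposal is correct and follows essentially the same route as the paper. Both arguments reduce the lemma to a unitary equivalence of Kasparov cycles over $(C^*(H\times G),\,C^*(M,\maF)\otimes C^*G)$, and in both cases the unitary is manufactured from the fact that the $G$-action on $C^*(M,\maF)$ is implemented by the multipliers $U^G_g$. The paper makes this explicit via
\[
\maR(e\otimes f)(g)=\int_G e(r)\,U_r\,f(r^{-1}g)\,dr,\qquad e\in C(G,\maE),\ f\in C(G,C^*(M,\maF)),
\]
and then verifies directly that $\maR$ is isometric, intertwines the representations $\pi\rtimes G$ and $\pi\rtimes G^{\rm trivial}$, intertwines $A\rtimes G$ with $A\otimes\id$, and has dense range; your outline names exactly these checks and correctly flags the inner-product computation as the substantive one. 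The only item you leave implicit is the density/surjectivity of the unitary, which the paper handles by an explicit preimage construction using $\eta\mapsto (\eta U^*)(g)=\eta(g)^{g^{-1}}$; you should expect to need a similar argument rather than getting surjectivity for free.
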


{
\begin{proof} We denote by $C^*(M,\maF)_t$ the $C^*$-algebra of the foliation $(M,\maF)$ endowed with the trivial action. {Denote by  $[\psi]\in \k\K(C^*(M,\maF),C^*(M,\maF)_t)$ the $\k\K$-equivalence class defined using that the action on $C^*(M, \maF)$ is inner.
We then have} $$  \mathrm{Ind}^{\maF,G} \underset{C^*(M,\maF)}{\otimes} [\psi] =\mathrm{Ind}^{\maF,G^{\rm{trivial}}}.$$
Since 
$$
j^G( [\psi] )=[\Psi]\text{ and }j^G(\mathrm{Ind}^{\maF,G} \underset{C^*(M,\maF)}{\otimes} [\psi] )=j^G(\mathrm{Ind}^{\maF,G}) \underset{C^*(M,\maF)\rtimes G}{\otimes} j^G( [\psi] ),$$ 
we get the result by  applying the Kasparov descent on both side.
\end{proof}

}

Using the action of $G$ on the second foliation $(M', \maF')$ we also have the index map for leafwise $G$-transversally elliptic symbols
$$
\mathrm{Ind}^{\maF'}: \mathrm{K_{G}} (F'_G) \longrightarrow \k\k (C^*G, C^*(M', \maF')).
$$
A classical construction then allows to build up from a $G\times H$-invariant leafwise $H$-transversally elliptic symbol $a$ on $(M, \maF)$ and a $G$-invariant leafwise $G$-transversally elliptic symbol $b$ on $(M', \maF')$ a new symbol which  is a leafwise symbol on the cartesian product $(M\times M', \maF\times \maF')$ of the two foliated manifolds, is $G\times H$-invariant and $G\times H$-transversally elliptic. 

More precisely, there is a well defined product for all $i, j\in \Z_2$,
\begin{equation}\label{product}
\k^i_{\mathrm{G\times H}} (F_H) \otimes \k^j_\mathrm{G} (F'_G) \longrightarrow \k^{i+j}_{\mathrm{G\times H}} ((F\times F')_{G\times H}),
\end{equation}
which assigns to $[\sigma]\otimes [\sigma']$ the class of the sharp product $\sigma\sharp \sigma'$ that we proceed to recall now. The cartesian product $F\times F'$ fibers over $M\times M'$ and generates the foliation of $M\times M'$ whose leaf through any given $(m, m')$ is just the cartesian product  $L_m\times L'_{m'}$ of the leaf of $(M, \maF)$ through $m$ by the leaf of $(M', \maF')$ through $m'$. The compact group $G\times H$ acts obviously by leaf-preserving diffeomorphisms of this product foliation and the subspace $(F\times F')_{G\times H}$ of vectors transverse to this action, is well defined. Notice as well that  this product action of $G\times H$ is also a holonomy  action. For the convenience of the reader, let us describe the above product in the case $i=j=0$ for simplicity.  {The other cases can be
deduced by replacing $M$ by $M \times S^1$ with the foliation $\maF\times 0$.} Recall that any class $b$ in $\k_G(F'_G)$ can be represented  by a classical $G$-invariant pseudodifferential symbol $\sigma'$ along the leaves of the foliation $\maF'$, that is defined over $F'$, and whose restriction to $ F'_G\setminus M' $ is pointwise invertible. In the same way, any class $a$ in $\mathrm{K_{G\times H}} (F_H)$ can be represented by a classical $G\times H$-invariant pseudodifferential symbol $\sigma$ along the leaves of the foliation $\maF$, that is defined over $F$, and whose restriction to $F_H \setminus M$ is pointwise invertible. {We may assume that $\sigma$ and $\sigma'$ both have positive order, see \cite{Atiyah-Singer:I} and also  \cite{atiyah1974elliptic}}. The product $a\sharp b$ is then the class in $\mathrm{K_{G\times H}} ((F\times F')_{G\times H})$ which is represented by the leafwise $G\times H$-invariant symbol on the foliation $\maF\times \maF'$ over $M\times M'$ defined by:
$$
\sigma\sharp\sigma' := \begin{pmatrix}\sigma \otimes 1&-1\otimes {\sigma'}^*\\1\otimes \sigma' &\sigma^*\otimes 1 \end{pmatrix}.
$$
This is the standard cup-product formula, used in   \cite{atiyah1974elliptic} where it  adapted the original Atiyah-Singer construction  from the seminal paper \cite{Atiyah-Singer:I} to the transversally elliptic context, and whose extension to the foliation setting is a routine exercise. In particular, the restriction of $\sigma\sharp\sigma' $ to $(F\times F')_{G\times H}\smallsetminus (M\times M')$ is pointwise invertible as allowed and hence represents our announced sharp product.

\medskip


We are now in position to prove the multiplicativity axiom which computes the index of the sharp product $a\sharp b$ in terms of the indices of $a$ and $b$. Notice that  $
C^*(M\times M', \maF\times \maF') \simeq C^*(M, \maF) \otimes C^*(M', \maF').$

\medskip

\begin{thm}\label{thm:multiplicativité:indice}
{For any $i, j\in \Z_2$, the following diagram commutes: 
{\small{$$\xymatrix{\k^i_{\mathrm{G\times H}} (F_H)\otimes \k_\mathrm{G}^j (F'_G)\ar[r]^{\bullet \sharp\bullet} \ar[d]_{\widehat{\Ind}^{\maF, G} \otimes\; \Ind^{\maF'}}~&~\k^{i+j}_{\mathrm{G\times H}} ((F\times F')_{G\times H})\ar[d]^{\Ind^{\maF\times\maF'}}\\
\k\k^i (C^*(G\times H),C^*(M,\mathcal{F})\otimes C^*G) \otimes \k\k^j (C^*G,C^*(M',\mathcal{F}'))\hspace{0cm}\ar[r]^{\hspace{1,5cm}\bullet \underset{C^*G}{\otimes} \bullet} ~&~\hspace{0cm}\k\k^{i+j}(C^*(G\times H), C^*(M\times M',\mathcal{F}\times \maF')).}
$$}}}

In other words, if $a\in \k^i_{\mathrm{G\times H}} (F_H)$ and $b\in \k_\mathrm{G}^j (F'_G)$ and if $a\sharp b\in \k^{i+j}_{\mathrm{G\times H}} ((F\times F')_{G\times H})$ is their sharp product, then  we have
$$
\mathrm{Ind}^{\mathcal{F}\times \mathcal{F}'} (a\sharp b)= \mathrm{\widehat{Ind}}^{\maF,G} (a) \; \underset{C^*G}{\otimes} \; \mathrm{Ind}^{\maF'} (b).
$$
%
%

\end{thm}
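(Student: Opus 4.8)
The plan is to prove the multiplicativity formula by explicitly identifying a Kasparov cycle representing the sharp product $a\sharp b$ with the (internal) Kasparov product of cycles representing $\widehat{\Ind}^{\maF,G}(a)$ and $\Ind^{\maF'}(b)$, exactly in the spirit of the original Atiyah--Singer multiplicativity proof in \cite{atiyah1974elliptic} and the Connes--Skandalis argument. First I would choose convenient quantizations: pick a $G\times H$-invariant leafwise $H$-transversally elliptic order-zero operator $A$ on $(M,\maF)$ with principal symbol $\sigma$ representing $a$, and a $G$-invariant leafwise $G$-transversally elliptic order-zero operator $B$ on $(M',\maF')$ with principal symbol $\sigma'$ representing $b$. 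On $(M\times M', \maF\times\maF')$ one forms the operator $A\sharp B$ whose principal symbol is the cup-product matrix $\sigma\sharp\sigma'$ displayed above; by construction (and Lemma \ref{lem:Kasparov:eq:def}) it is $G\times H$-invariant and leafwise $G\times H$-transversally elliptic, so its index cycle $(\maE_{M\times M'},\pi,A\sharp B)$ represents $\Ind^{\maF\times\maF'}(a\sharp b)$, where $\maE_{M\times M'}\simeq \maE_M\otimes\maE_{M'}$ over $C^*(M,\maF)\otimes C^*(M',\maF')\simeq C^*(M\times M',\maF\times\maF')$.

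\textbf{Identifying the product cycle.} The key step is to recognize the right-hand side $\widehat{\Ind}^{\maF,G}(a)\underset{C^*G}{\otimes}\Ind^{\maF'}(b)$ as a concrete cycle. Using Lemma \ref{Compatibility}, replace $\widehat{\Ind}^{\maF,G}(a)$ by $j^{G^{\mathrm{trivial}}}\circ\Ind^{\maF,G^{\mathrm{trivial}}}(a)$, so that the $C^*(M,\maF)\otimes C^*G$ factor appears through the Kasparov descent $j^{G^{\mathrm{trivial}}}$ of the $G$-equivariant (trivial $G$-action) cycle $(\maE_M,\pi_H,A)$ over $(C^*H,C^*(M,\maF))$; its descent is a cycle over $(C^*(H\times G), C^*(M,\maF)\otimes C^*G)$ whose underlying module is $\maE_M\rtimes G$ with operator $A\rtimes G$. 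On the other side $\Ind^{\maF'}(b)=[(\maE_{M'},\pi',B)]$ over $(C^*G, C^*(M',\maF'))$. The internal Kasparov product over $C^*G$ then produces a cycle over $(C^*(H\times G), C^*(M,\maF)\otimes C^*(M',\maF'))$ on the module $(\maE_M\rtimes G)\underset{C^*G}{\otimes}\maE_{M'}$, equipped with the representation $(\pi_H\rtimes G)\otimes 1$ of $C^*(H\times G)$ restricted appropriately, and a connection-type operator built from $A\rtimes G$ and $B$. The plan is to write down an explicit isomorphism of this module onto a Hilbert submodule of $\maE_{M\times M'}$ — concretely $\maE_M\underset{C^*G}{\otimes}\maE_{M'}$ after using the descent identification $\maE_M\rtimes G\underset{C^*G}{\otimes}(\,\cdot\,)\simeq\maE_M\otimes(\,\cdot\,)$ mediated by an averaging/twisting unitary analogous to the map $\maR$ of Lemma \ref{Compatibility} and the isometry $\maV$ of Theorem \ref{thm:action:libre} — and to check that under this isomorphism the representations of $C^*(H\times G)$ correspond and the operator $A\sharp B$ is an $F$-connection for $B$ over the operator $A\rtimes G$, i.e. it satisfies Kasparov's two technical conditions: (i) $[\,(\pi_H\rtimes G)(\varphi)\otimes 1,\ A\sharp B\,]$ is a connection-compatible compact perturbation, and (ii) the positivity condition $(A\sharp B)(\mathrm{id}-(A\rtimes G\otimes 1)^2)(A\sharp B)\ge 0$ modulo compacts, holds. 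The anticommutation structure of the cup-product matrix $\begin{pmatrix}\sigma\otimes 1&-1\otimes\sigma'^*\\ 1\otimes\sigma'&\sigma^*\otimes 1\end{pmatrix}$ is precisely what makes these conditions hold; the compactness inputs are Proposition \ref{prop:inégalité:preuve:thm} (and its non-compact refinement) applied to each factor together with Proposition \ref{prop:dG}, exactly as in the proof of Theorem \ref{index-cycle}. By uniqueness of the Kasparov product, this identification yields the claimed equality of $\k\k$-classes.

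\textbf{Main obstacle.} The hard part will be the bookkeeping of the descent construction: verifying that the Kasparov product cycle $(\maE_M\rtimes G)\underset{C^*G}{\otimes}\maE_{M'}$ — which a priori involves functions on $G$ valued in $\maE_M$ — is genuinely isometric to the honest ``geometric'' cycle on $\maE_{M\times M'}$ with operator $A\sharp B$, and that the operator $A\sharp B$ actually is a connection in Kasparov's sense for this module decomposition. This is where one must carefully track the adjoint $G$-action on $C^*G$ versus the trivial $G$-action, the unitaries $U_g$ implementing the internal action on $C^*(M,\maF)$, and the twisting maps $\maR$, $\maV$ from the earlier proofs; the verification is a ``routine but lengthy'' computation with holonomy classes $\theta^g$, which is why the statement of Theorem \ref{thm:action:libre} and Lemma \ref{Compatibility} were set up with exactly these formulas. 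A secondary technical point is to ensure all operators can be taken uniformly (properly) supported so that products and symbol computations stay within the pseudodifferential calculus; this is handled by the same quantization conventions (Convention (K)) and the oscillatory-integral description of kernels used in the proof of Theorem \ref{thm:excision}. Once the connection is verified and positivity checked modulo compacts, associativity and uniqueness of the Kasparov product close the argument.
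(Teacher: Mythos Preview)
Your proposal is correct and follows essentially the same architecture as the paper's proof: use Lemma~\ref{Compatibility} to replace $\widehat{\Ind}^{\maF,G}$ by the descent for the trivial $G$-action, construct an explicit unitary identifying $(\maE_M\rtimes G^{\rm trivial})\underset{C^*G}{\otimes}\maE_{M'}$ with $\maE_M\otimes\maE_{M'}$ (not merely a submodule, by the way---the paper's map $\maU(\rho\otimes\eta')=\int_G \rho(g)\otimes U'_g\eta'\,dg$ is onto), check that it intertwines representations and operators, and then verify the connection and positivity axioms for the Kasparov product.

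The one genuine technical difference is that the paper works in the \emph{unbounded} picture: it chooses first-order operators $A$ and $B$, represents the product by the unbounded cycle with operator $(A\rtimes G^{\rm trivial})\underset{\pi_G}{\otimes}\id+\id\underset{\pi_G}{\otimes}B$ (well defined because $B$ commutes strictly with $\pi_G$), and then shows that its Woronowicz transform is a $Q(B)$-connection via the decomposition $M^{1/2}\,Q(A\rtimes G)\otimes\id+N^{1/2}\,\id\otimes Q(B)$, proving $M$ is a $0$-connection and $N$ a $1$-connection by explicit resolvent estimates. Your bounded approach with the order-zero matrix $A\sharp B$ is equally valid and morally equivalent---the anticommutation in the $\sharp$-matrix is exactly what produces, after passing to order one, the clean sum $A\otimes 1+1\otimes B$---but the unbounded route buys a slightly more transparent verification of the connection property, since the compactness inputs reduce to the single fact that $(1+B^2)^{-1}\pi_G(\varphi)$ is compact, rather than having to unpack Proposition~\ref{prop:inégalité:preuve:thm} inside the product module.
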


\medskip



\begin{proof}
{We treat the case $i=0=j$, the other cases are similar.} If $P_0$ is a longitudinal pseudodifferential operator of positive order then we denote again by $P$  the closure of the formally self-adjoint longitudinal operator $\begin{pmatrix}
0&P_0^*\\P_0&0
\end{pmatrix}$ in the corresponding Hilbert module. We also recall that the Woronowicz transform of $P$ is the adjointable operator $Q(P)=P(1+P^2)^{-1/2}$.

{Let $A_0 : C^{\infty}_c(\mathcal{G},r^*E^+) \rightarrow C^{\infty}_c(\mathcal{G},r^*E^-)$ be a $G\times H$-invariant, leafwise $H$-transversally elliptic operator of order $1$ whose principal symbol represents the class  $a$.  Let similarly $B_0 : C^{\infty}_c(\mathcal{G}',r^*E^{'+}) \rightarrow C^{\infty}_c(\mathcal{G}',r^*E^{'-})$ be a $G$-invariant, leafwise $G$-transversally elliptic operator of order $1$ whose principal symbol lies in the class  $b$. 
The index classes associated respectively  are then by definition 
$$
\mathrm{Ind}^{\mathcal{F},G}(A)=[(\mathcal{E}, \pi_H,Q(A))]\text{ and }\mathrm{Ind}^{\maF'}(b)=[(\mathcal{E}', \pi_G,Q(B))],
$$
where  the first class is $G$-equivariant for the $G^{\rm{trivial}}$-action on $\maE$, i.e. viewed as a Hilbert $G$-module for the trivial $G$-action on $C^*(M, \maF)$ by using the holonomy hypothesis. 
}

{Hence,  the image of  $[(\mathcal{E}, \pi_H,Q(A))]$ under the Kasparov descent is represented, with our previous notations and using Lemma \ref{Compatibility}, by the Kasparov $(C^*H\rtimes G, C^*(M, \maF)\otimes  C^*G)$ cycle 
$$
(\maE\rtimes G^{\rm{trivial}}, \pi_H\rtimes G^{\rm{trivial}}, Q(A)\rtimes G^{\rm{trivial}}).
$$
}

Recall that the action of $G$ on the $C^*$-algebra $C^*(M', \maF')$ is also inner through  unitary multipliers that we denote by $(U'_g)_{g\in G}$. Let $\maU : C (G, \maE)\otimes \maE' \rightarrow \maE\otimes \maE'$ be the map defined by 
$$
\maU (\rho\otimes \eta'):=\int_G \rho({g)\otimes U'_g} \eta' dg, \quad \text{ for }\rho\in C(G, \maE) \text{ and } \eta'\in \maE' .
$$ 
Here the integral makes sense  in the norm topology of the Hilbert module  closure, denoted as usual  $\maE\otimes \maE'$, over the $C^*$-algebra $C^*(M, \maF)\otimes C^*(M', \maF')$. 
From the very definition of the representation $\pi_G$, we easily deduce that for $\varphi \in C (G)$, one has 
$$
\maU(\rho \cdot \varphi \otimes \eta' )=\maU(\rho\otimes \pi_G(\varphi )\eta'),
$$
so that $\maU$ is well defined. Moreover, we can check now that $\maU$ extends to a unitary isomorphism which identifies $(\mathcal{E}\rtimes G^{\rm{trivial}})\underset{\pi_G}{\otimes}\mathcal{E}'$ with the spatial tensor product Hilbert module $\mathcal{E}\otimes \mathcal{E}'$. 
{Indeed, a direct computation shows that $\maU$ is isometric,}
and it is also straightforward to check that  $\maU(C (G, \maE){\otimes} \maE')$ is dense in $\mathcal{E}\otimes \mathcal{E}'$. {Indeed, given $\eta\in \maE$ and $\eta'\in \maE'$, we may use an approximate unit $(e_\alpha)_\alpha$ of the $C^*$-algebra $C^*G$, composed of continuous functions on $G$ which are supported as close as we please to the neutral element of $G$, to see that $\pi_G(e_\alpha)(\eta')$ converges {\underline{in $\maE'$}} to $\eta'$. Hence, the net $
\maU \left( (\eta\otimes e_\alpha) \otimes \eta'\right) = \eta \otimes \pi_G(e_\alpha)(\eta')$ 
converges to $\eta\otimes \eta'$  in the spatial tensor product $\maE\otimes \maE'$. }
 It thus remains to check that $\maU$ intertwines representations and operators, {but this is as well a straightforward verification}. 

{The Kasparov product of $\mathrm{\widehat{Ind}}^{\maF,G}(A)$ and $ \mathrm{Ind}^{\mathcal{F}'}(B)$ can be represented by  the unbounded cycle 
$$
\left((\mathcal{E}\rtimes G^{\rm{trivial}}) \underset{\pi_G}{\otimes}\mathcal{E}' , (\pi_H \rtimes G^{\rm{trivial}}) \underset{\pi_G}{\otimes}  \id, (A  \rtimes G^{\rm{trivial}})\underset{\pi_G}{\otimes}\id +\id \underset{\pi_G}{\otimes} B\right)
$$ 
where} the operator $\id\underset{\pi_G}{\otimes} B$ is well defined here since $B$ commutes strictly  with the representation $\pi_G$.  
{Indeed, although this is not an external Kasparov product, this strict commutation allows to apply the argument given in \cite{baaj1983theorie}[Lemma 3.1 $\&$ Theorem 3.2] which adapts mutatis mutandis to our situation. We thank the referee for pointing out this observation to us.}

\end{proof}

\section{Reduction to tori actions}\label{NaturalityInduction}

We now use the previous axioms  to investigate the induction property of our index morphism with respect to closed subgroups, and then more specifically to a maximal torus.

{We recall first some standard constructions from \cite{atiyah1974elliptic}. Let $G$ be a compact \underline{connected} Lie group and let $H$ be a closed subgroup of $G$. Denote by $i : H \hookrightarrow G$ the inclusion. 
Then the functoriality class  $[i] \in \k\k(C^*G, C^*H)$ is defined as follows, see \cite{julg1982induction}. We fix Haar measures on $H$ and $G$ and consider the right $L^1(H)$-module structure on the space $C(G)$, which is induced by the right action of  $H$ on $G$. More precisely,  we set for $f \in C(G)$ and  $\psi \in L^{1}(H)$:
$$
f \cdot \psi (g)=\int_H f(gh^{-1}) \psi(h) \; dh,
$$
and define the $L^1(H)$-valued hermitian structure by setting for $f_1, f_2\in C(G)$:
$$
\langle f_1,f_2 \rangle (h)=\int_G \overline{f_1}(g)f_2(gh)\; dg.
$$
The completion  of this prehilbertian $L^1(H)$-module  is then a Hilbert $C^*H$-module that we shall denote by  $J(G, H)$. 
The left action of $G$ on itself by translation allows to define, after completing, the representation $\pi_G : C^*G \rightarrow \mathcal{L}_{C^*H} (J(G, H))$. The triple $(J(G, H),\pi_G ,0)$ is then a Kasparov cycle over the pair of $C^*$-algebras $(C^*H, C^*G)$, see again  \cite{julg1982induction}.}

{\begin{defi}\cite{julg1982induction}\label{Def-[i]}
The functoriality class $[i]$ is the  class of the Kasparov cycle $(J(G, H),\pi_G ,0)$, i.e.
\begin{equation}\label{LaClasse-i*}
[i] := [(J(G, H),\pi_G ,0)] \; \in \; \k\k(C^*G,C^*H).
\end{equation}
\end{defi}}

{\begin{remarque}
Since the crossed product $C^*$-algebra $C(G/H)\rtimes G$ for the induced left action of $G$ on the homogeneous manifold $G/H$, is Morita equivalent to $C^*H$, it is easy to reinterpret the class $[i]$ as  the class induced {through the descent map for the $G$-action, by the trivial representation of $H$ viewed  as a trivial $G$-equivariant vector bundle over $G/H$}. 
\end{remarque}}
%
%

{Since the underlying closed manifold $G$ is endowed with the $G\times H$-action given by $(g,h)\cdot g'=gg'h^{-1}$  for $g,g'\in G$ and $h\in H$, we may use the product defined in Equation \eqref{product}  for any given smooth foliation $\maF$ on a  closed manifold $M$ as soon as this latter is endowed  with a smooth leaf-preserving $H$-action, {which is a holonomy action.} Indeed, we are considering here the trivial top-dimensional foliation on $G$ and we thus get  the following product for $j\in \Z_2$
\begin{equation}\label{productG}
\k_{\mathrm{G\times H}}\big(T_GG \big)\otimes \k^j_\mathrm{H}\big(F_H\big)  \longrightarrow \k^{j}_{\mathrm{G\times H}}\big((TG\times F)_{G\times H}\big).
\end{equation}
The space $T_GG$ is just $G\times \{0\}\simeq G$, and hence since $H$ acts freely on $G$:
$$
\k_{\mathrm{G\times H}} (T_GG) \simeq \k_\mathrm{G} (G/H) \simeq  R(H).
$$
Moreover, $H$ also acts freely on the cartesian product $G\times M$ preserving the product foliation $TG\times F$ and the quotient manifold $Y:=G\times_H M$ inherits  a foliation that we denote by  $\mathcal{F}^Y$ and which is automatically endowed with the action of $G$ by holonomy diffeomorphisms, as can be checked easily. The receptacle group $\k^{j}_{\mathrm{G\times H}}\big((TG\times F)_{G\times H}\big)$ in \eqref{productG} is then given by 
\begin{equation}\label{Iso-q*}
\k^j_{\mathrm{G\times H}}\big((TG\times F)_{G\times H}\big) \simeq \k^j_\mathrm{G} (F^Y_G).
\end{equation}
{Notice that the space $F^Y_G=G\times_H F_H$ is $G$-equivariantly Morita equivalent as a groupoid to $(G\times F_H)\rtimes H$ and  we deduce the following list of Morita equivalences
$$
F^Y_G\rtimes G \sim [(G\times F_H)\rtimes H]\rtimes G \simeq [(G\times F_H)\rtimes G]\rtimes H\sim [(G\times F_H)/ G]\rtimes H \simeq F_H\rtimes H.
$$
In particular, the  group $\k^j_\mathrm{G} (F^Y_G)$ is isomorphic to the group $K^j_\mathrm{H}(F_H)$, the isomorphism $i_* : \k^j_\mathrm{H}(F_H) \longrightarrow \k^j_\mathrm{G} (F^Y_G)$ is given explicitely as follows. There is a privileged element in the group $K_{\mathrm{G\times H}} (T_GG)$ which corresponds to the class, in $R(H)$, of the trivial representation of $H$. This class is in fact the class of the $G\times H$-equivariant $G$-transversally elliptic symbol on $G$, associated  with the zero operator $0 : C^{\infty}(G) \rightarrow 0$. The product in \eqref{productG} by this trivial class yields the allowed isomorphism
\begin{equation}\label{Iso-i*}
i_* : \k^j_\mathrm{H}(F_H) \longrightarrow \k^j_\mathrm{G} (F^Y_G).
\end{equation}
As we prove below, this isomorphism allows to reduce the index problem for leafwise $H$-transversally elliptic operators on foliated $H$-manifolds to the index problem for leafwise $G$-transversally elliptic operators on foliated $G$-manifolds. Notice that $Y$ is the base of the principal $H$-fibration $G\times M \to Y$ and we are exactly in position to apply the properties of the index morphism with respect to free actions, see Subsection \ref{Section:FreeActions}. Furthermore, since the compact Lie group $G$ is assumed to be connected here, the $C^*$-algebra upstairs, that is $C^*(G\times M, G\times \maF)$ is Morita equivalent, and in fact isomorphic when $\maF$ is not the zero foliation \cite{BenameurPacific, HilsumSkandalis1983Stability}, to $C^*(M, \maF)$. Hence we end up with a $\k\k$-equivalence that we denote by $\epsilon\in \k\k (C^*(M, \maF), C^*(Y, \maF^Y))$.  }

\begin{thm} \cite{atiyah1974elliptic} \label{thm:induction:1}
{For $j\in \Z_2$, the following diagram commutes}

{$$
\xymatrix{\k^j_\mathrm{H}\big(F_H \big) \ar[rrr]^{i_*}\ar[d]_{\ind } &&& \k_\mathrm{G}^j (  F^Y_{G} ) \ar[d]^{\mathrm{Ind}^{\mathcal{F}^Y}}\\
\k\k^j \big(C^*H,C^*(M,\mathcal{F})\big) \ar[rrr]_{[i]\underset{C^*H}{\otimes} \hspace*{0.2cm}\bullet \underset{C^*(M,\mathcal{F})}{\otimes} \epsilon }&&&\k\k^j \big(C^*G ,C^*(Y,\mathcal{F}^Y)\big).}
$$}
\end{thm}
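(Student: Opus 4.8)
The strategy is to realize both sides of the diagram as instances of the free-action theorem (Theorem~\ref{thm:action:libre}), applied to the principal $H$-fibration $q: G\times M \to Y = G\times_H M$, composed with the multiplicativity axiom (Theorem~\ref{thm:multiplicativité:indice}) applied to the trivial top-dimensional foliation on $G$. First I would fix a class $a\in \k^j_\mathrm{H}(F_H)$ and unwind the definition of $i_*(a)$: by construction $i_*(a)$ is the image under the isomorphism \eqref{Iso-q*} of the sharp product $\chi_1^G \sharp a$, where $\chi_1^G\in \k_{\mathrm{G\times H}}(T_GG)\simeq R(H)$ is the class of the zero operator $0:C^\infty(G)\to 0$, i.e.\ the trivial representation of $H$. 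Thus $i_*(a) = q^*(\text{something})$ where $q^*$ is the pullback isomorphism of Theorem~\ref{thm:action:libre} for the $H$-fibration $q:G\times M\to Y$; more precisely, under the Morita identification $\k^j_\mathrm{G}(F^Y_G)\simeq \k^j_{\mathrm{G\times H}}((TG\times F)_{G\times H})$ the class $i_*(a)$ is exactly $q^*$-related to $\chi_1^G\sharp a$ viewed on $G\times M$.

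Next I would compute $\mathrm{Ind}^{\maF^Y}(i_*(a))$ by running Theorem~\ref{thm:action:libre} on $q:(G\times M, TG\times\maF)\to (Y,\maF^Y)$: ignoring the quasi-trivial Morita map, $\mathrm{Ind}^{\maF^Y}(i_*(a)) \cong \chi_1^H \underset{C^*H}{\otimes}\ind(q^*(i_*(a)))$, where now $\chi_1^H$ is the trivial representation of the fibre group $H$ and $q^*(i_*(a))$ lives upstairs on $G\times M$. But by the previous paragraph $q^*(i_*(a))$ is the sharp product of $\chi_1^G$ (on $G$) with $a$ (on $M$), so I can apply the multiplicativity theorem, Theorem~\ref{thm:multiplicativité:indice}, with the first factor being the foliated manifold $(G,\text{point foliation})$ carrying the $G\times H$-action and the second factor $(M,\maF)$ carrying only the $H$-action (here the roles of $G$ and $H$ in the statement of Theorem~\ref{thm:multiplicativité:indice} are played by $H$ and—after relabelling—$G$, so one must be careful with the bookkeeping). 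This gives $\ind(\chi_1^G\sharp a) = \widehat{\mathrm{Ind}}^{\text{pt},\,\cdot}(\chi_1^G)\underset{C^*G}{\otimes}\mathrm{Ind}^{\maF}(a)$, and the key subcomputation is to identify $\widehat{\mathrm{Ind}}(\chi_1^G)$, the index of the zero operator on $G$, with the induction class $[i]\in\k\k(C^*G,C^*H)$—this is precisely the content of the original Atiyah reciprocity computation for the homogeneous space, recalled in the remark after Definition~\ref{Def-[i]} (the index of $0:C^\infty(G)\to 0$ as a $G\times H$-equivariant object, pushed through the descent map, is $[i]$).

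Assembling the pieces: $\mathrm{Ind}^{\maF^Y}(i_*(a))$ equals, modulo the Morita equivalences $\epsilon$ and $\chi_1^H\otimes(-)$, the triple Kasparov product $[i]\underset{C^*H}{\otimes}\mathrm{Ind}^{\maF}(a)\underset{C^*(M,\maF)}{\otimes}\epsilon$, which is exactly the bottom arrow of the diagram applied to $\ind(a)$. The remaining work is to check that the various identifications are compatible and that one may absorb the $\chi_1^H\otimes(-)$ Morita isomorphism correctly; associativity of the Kasparov product (\cite{Kasparov:KKtheory}) is the tool that makes the two ways of bracketing the product agree. The main obstacle I expect is bookkeeping the Morita equivalences and keeping track of which group acts on which factor: the chain of Morita equivalences $F^Y_G\rtimes G \sim F_H\rtimes H$ and $C^*(G\times M, G\times\maF)\simeq C^*(M,\maF)$ must be threaded through consistently, and one must verify that the isomorphism $i_*$ of \eqref{Iso-i*} really does correspond, under $q^*$, to the sharp product with $\chi_1^G$ as claimed—this last identification of the pullback with a sharp product is where a careful geometric argument (a distinguished cover trivializing both the $H$-fibration and the foliation, as used before Theorem~\ref{thm:action:libre}) is needed rather than pure formalism.
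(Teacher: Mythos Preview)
Your proposal is correct and follows essentially the same approach as the paper: apply the free-action axiom (Theorem~\ref{thm:action:libre}) to the principal $H$-fibration $q:G\times M\to Y$, then the multiplicativity axiom (Theorem~\ref{thm:multiplicativité:indice}) to decompose the index of the sharp product $[\sigma(0)]\sharp a$, and finally identify the index of the zero operator on $G$ with $[i]$ via the trivial-representation cap and assemble everything by associativity of the Kasparov product. One minor sharpening: the class $\widehat{\mathrm{Ind}}^{G,H}([\sigma(0)])$ lives in $\k\k(C^*(G\times H), C^*H)$, not in $\k\k(C^*G,C^*H)$, so the precise identity used is $\chi_1^H\underset{C^*H}{\otimes}\widehat{\mathrm{Ind}}^{G,H}([\sigma(0)])=[i]$ rather than $\widehat{\mathrm{Ind}}=[i]$ directly---this is exactly the ``absorption of $\chi_1^H$'' you flag, and in the paper it is stated as a final identity rather than left as a separate verification; likewise the identification $q^*(i_*a)=[\sigma(0)]\cdot a$ holds \emph{by definition} of $i_*$ (see \eqref{Iso-i*} and \eqref{Iso-q*}) and needs no additional geometric argument.
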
 

\begin{proof}
Recall  the Kasparov class  
$$
\mathcal{E}_q \; \in \; \k\k\left(C^*(G\times M, G\times \maF), C^*(Y, \maF^Y)\right) \simeq \k\k\left( \mathcal{K}(L^2(G)) \otimes C^*(M, \maF), C^*(Y, \maF^Y)\right)
$$ 
introduced in Section \ref{Section:FreeActions} and associated here to the principal $H$-fibration $q : G\times M \rightarrow Y = G\underset{H}{\times} M$. If we denote by  $\mu (G) \in \k\k(\mathbb{C}, \mathcal{K}(L^2(G)))$ the standard  $\k\k$-equivalence then we have by definition  
$$
\epsilon = \mu (G)\underset{\mathcal{K}(L^2(G))}{\otimes} \maE_q.
$$
Let now $a\in \k^j_\mathrm{H}(F_H)$ be fixed. By Theorem \ref{thm:action:libre}, we know that
$$
\Ind ^{\maF^Y} (i_*a) = \chi_1^H \underset{C^*H}{\otimes}\mathrm{Ind}^{G\times \mathcal{F}}(q^* (i_*a)) \underset{C^*(G\times M,G\times\mathcal{F})}{\otimes} \mathcal{E}_q,
$$
where $\chi_1^H\in \k\k (\C, C^*H)$ is the class of the trivial representation of $H$ and where in the present case $q^* (i_*a)$ is just the isomorphic class to $i_*a$ through the identification \eqref{Iso-q*} and thus coincides by definition of $i_*$ with $[\sigma (0)]\cdot a$ in the product \eqref{productG}. Thus
$$
\mathrm{Ind}^{G\times \mathcal{F}}(q^* (i_*a)) =  \mathrm{Ind}^{G\times \mathcal{F}}([\sigma (0)]\cdot a)  \; \in \; \k\k^j (C^*H\otimes C^*G, \mathcal{K}(L^2(G)) \otimes C^*(M, \maF)).
$$ 
We can now apply the multiplicative property of the index from Theorem \ref{thm:multiplicativité:indice} to compute
$$
\mathrm{Ind}^{G\times\mathcal{ F}}([\sigma (0)] \cdot a ) =\widehat{\mathrm{Ind}}^{\mathrm{G},H }([\sigma (0)])\underset{C^*H}{\otimes} \ind (a).
$$
For simplicity the  $\k\k$-equivalence  class $\mu (G)$ is often removed from the formulae, it is only used to naturally  identify, in $K$-theory, $\maK (L^2(G))$ with $\C$. 
The index class $\widehat{\mathrm{Ind}}^{\mathrm{G} , H }([\sigma (0)]) \in \k\k (C^*G\otimes C^*H, C^*H)$
 reduces here to the image under the Kasparov descent map, for the trivial $H$-action,  of the $H$-equivariant index class in $\k\k_\mathrm{H}(C^*G,\mathbb{C})$, of the $G$-transversally elliptic operator $0 : C^{\infty}(G) \rightarrow 0$.
  {By gathering the previous equalities, we finally get
 \begin{eqnarray*}
 \mathrm{Ind}^{\mathcal{F}^Y}(i_*(a)) & = & \chi_1^H\underset{C^*H}{\otimes}\mathrm{Ind}^{G\times \mathcal{F}}(q^* (i_*a)) \underset{C^*(G\times M,G\times\mathcal{F})}{\otimes} \mathcal{E}_q\\
 & = & \chi_1^H\underset{C^*H}{\otimes} \left[\widehat{\mathrm{Ind}}^{\mathrm{G},H }([\sigma (0)])\underset{C^*H}{\otimes} \ind (a)\right] \underset{C^*(G\times M,G\times\mathcal{F})}{\otimes} \mathcal{E}_q\\
  & = &\chi_1^H\underset{C^*H}{\otimes} \left[\widehat{\mathrm{Ind}}^{\mathrm{G},H }([\sigma (0)])\underset{C^*H}{\otimes} \ind (a)\underset{\C}{\otimes} \mu (G) \right] \underset{C^*(G\times M,G\times\mathcal{F})}{\otimes} \mathcal{E}_q\\
 & = &\left(\chi_1^H\underset{C^*H}{\otimes} \widehat{\mathrm{Ind}}^{\mathrm{G},H }([\sigma (0)])\right) \underset{C^*H}{\otimes} \ind (a)\underset{C^*(Y, \mathcal{F}^Y)}{\otimes} \left( \mu (G)\otimes_{\mathcal{K}(L^2(G))} \maE_q\right)
 \end{eqnarray*}
 where we have used associativity of the Kasparov product. The proof is now complete since we have
 $$
 \chi_1^H\underset{C^*H}{\otimes} \widehat{\mathrm{Ind}}^{\mathrm{G},H }([\sigma (0)]) = [i] \text{ and } \mu (G) \otimes_{\mathcal{K}(L^2(G))} \maE_q = \epsilon.
 $$
}
\end{proof}

%


\begin{remarque}
{Theorem \ref{thm:induction:1} allows to extract information on the index morphism for the action of the compact Lie group $H$ using all such compact connected Lie groups $G$ and their induced actions on the Morita equivalent foliation $(Y, \maF^Y)$. Such $G$ always exists as any compact Lie group is isomorphic to a closed subgroup of a unitary group. }
\end{remarque}

\bigskip

We  fix for the rest of this section  a compact connected Lie group $G$ and a smooth closed foliated manifold which is endowed with an action of $G$ by leaf-preserving diffeomorphisms. For simplicity, we shall denote this new $G$-foliation again by $(M, \maF)$ since we shall again need to build up the new foliation $(Y, \maF^Y)$ by using a particular closed subgroup of $G$,  so no confusion should occur.  Since $G$ is connected this action is a holonomy action and we may apply all the results of the previous sections. In order to compute the index morphism for leafwise $G$-transversally elliptic operators, we shall use a maximal torus $\T$ in $G$ and we use the induced action of $\T$ to define the Morita equivalent $G$-foliation  $(Y, \maF^Y)$ as explained above. However, since the action of $\T$ on $(M, \maF)$ is now the restriction of an action of the whole group $G$, this foliation is easier to describe. More precisely, the map $(g,m) \rightarrow (gH,g\cdot m)$ is a $G$-equivariant diffeomorphism which allows to identify  the foliation $(Y, \maF^Y)$ with the foliation  $(G/\T\times M, G/\T\times \maF)$. We quote for later use that  $C^*(Y, \maF^Y)$ coincides here with $C^*(M, \maF)\otimes \maK (L^2(G/\T))$ which in turn, when $\maF$ is not the zero foliation, is even isomorphic to $C^*(M, \maF)$. Notice also that there is hence a  well defined product
\begin{equation}\label{Product}
{\k_\mathrm{G}^j (F_G) \otimes \k_\mathrm{G} (T(G/\T)) \longrightarrow \k_\mathrm{G}^j (F^Y_G).}
\end{equation}

Recall that $G/\T$ carries a {\underline{$G$-invariant}} complex structure and we may  use the Dolbeault operator $\overline{\partial}$. This is an elliptic $G$-invariant operator on {the rational variety $G/\T$ whose $G$-index equals  $1\in R(G)$ since only the zero-degree Dolbeault cohomology space is non trivial}, see \cite{atiyah1974elliptic},  i.e. 
$$
\mathrm{Ind}(\overline{\partial})=1 \; \in \; R(G).
$$
{The product by  the symbol class $[\sigma (\overline{\partial})]\in \K(T(G/H))$ in \eqref{Product} allows to  define the morphism} 
$$
\beta : \K(F_G) \longrightarrow \K (F^Y_G).
$$

{Recall the isomorphism $i_*$ defined in Equation \eqref{Iso-i*} as well as the $\k\k$-class $[i]$ introduced in Definition \ref{Def-[i]}.  We use these notations for the torus closed subgroup $H=\T$ to state the following}

\medskip

\begin{thm}\label{thm-induction}
Let $\T$ be a maximal torus of the compact connected Lie group $G$. Denote by 
$r^G_\T: \k_\mathrm{G}^j(F_G)\rightarrow \k^j_{\T}(F_\T)$ the composite map $r^G_\T:= (i_*)^{-1}\circ \beta$. Then for $j\in \Z_2$ the following diagram commutes: 
$$\xymatrix{\k_\mathrm{G}^j(F_G) \ar[r]^{r^G_\T} \ar[d]_{\mathrm{Ind}^{\mathcal{F}}}& \k^j_\T (F_\T)\ar[d]^{\mathrm{Ind}^{\mathcal{F}}}\\
\;\;\;\; \k\k^j(C^*G,C^*(M,\mathcal{F})) \;\;&\;\;\; \k\k^j(C^*\T,C^*(M,\mathcal{F})).\;\; \;\;  \ar[l]^{[i] \underset{C^*\T}{\otimes} \bullet }
}$$
\end{thm}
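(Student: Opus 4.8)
The plan is to compute the index of a single auxiliary class in two different ways and then compare. Fix $a\in\k_\mathrm{G}^j(F_G)$ and write $Y=G\times_\T M$, identified as usual with $G/\T\times M$ and carrying the foliation $\maF^Y$ which under this identification is $(G/\T)\times\maF$, the factor $G/\T$ being equipped with the one-leaf foliation. By the very definition $r^G_\T=(i_*)^{-1}\circ\beta$ one has $i_*\bigl(r^G_\T(a)\bigr)=\beta(a)$ in $\k_\mathrm{G}^j(F^Y_G)$. Applying Theorem \ref{thm:induction:1} to the closed subgroup $H=\T$ and to the class $b:=r^G_\T(a)$ then gives
$$\mathrm{Ind}^{\maF^Y}\bigl(\beta(a)\bigr)\;=\;[i]\underset{C^*\T}{\otimes}\mathrm{Ind}^{\maF}\bigl(r^G_\T(a)\bigr)\underset{C^*(M,\maF)}{\otimes}\epsilon,$$
where $\epsilon\in\k\k\bigl(C^*(M,\maF),C^*(Y,\maF^Y)\bigr)$ is the $\k\k$-equivalence recalled before the statement. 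Since $\epsilon$ is invertible and the Kasparov product is associative, the theorem will follow once I establish the ``$\beta$-invariance'' of the index, namely that $\mathrm{Ind}^{\maF^Y}\bigl(\beta(a)\bigr)=\mathrm{Ind}^{\maF}(a)\underset{C^*(M,\maF)}{\otimes}\epsilon$, after which one cancels $\epsilon$ on the right.

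For the $\beta$-invariance I would invoke multiplicativity. By construction $\beta(a)$ is the sharp product, in the sense of \eqref{product}--\eqref{Product}, of $a$ with the symbol class $[\sigma(\overline{\partial})]\in\K\bigl(T(G/\T)\bigr)$ of the Dolbeault operator on $G/\T$; here $G$ acts transitively on $G/\T$, so $T_G(G/\T)$ is the zero section and any leafwise operator on $(G/\T,\text{one leaf})$ is automatically leafwise $G$-transversally elliptic. Using the commutativity of $\sharp$ to put the Dolbeault factor first and applying Theorem \ref{thm:multiplicativité:indice} (with first foliation $(G/\T,\text{one leaf})$ and trivial auxiliary group, second foliation $(M,\maF)$), together with the identifications $C^*(G/\T,\text{one leaf})=\maK\bigl(L^2(G/\T)\bigr)$ and $C^*(Y,\maF^Y)=\maK\bigl(L^2(G/\T)\bigr)\otimes C^*(M,\maF)$, one obtains
$$\mathrm{Ind}^{\maF^Y}\bigl(\beta(a)\bigr)\;=\;\widehat{\mathrm{Ind}}^{\,G/\T,G}\bigl([\sigma(\overline{\partial})]\bigr)\underset{C^*G}{\otimes}\mathrm{Ind}^{\maF}(a).$$

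It then remains to identify the class $\widehat{\mathrm{Ind}}^{\,G/\T,G}\bigl([\sigma(\overline{\partial})]\bigr)$. As recalled in the text before the statement, the $G$-index of $\overline{\partial}$ on $G/\T$ equals $1\in R(G)$, only the degree-zero Dolbeault cohomology surviving and it being the trivial representation. Since $\maK(L^2(G/\T))$ is, through the Hilbert $G$-space $L^2(G/\T)$, $G$-equivariantly Morita equivalent to $\C$, this says exactly that the $G$-equivariant index class of $\overline{\partial}$ is the associated $G$-Morita equivalence class in $\k\k_\mathrm{G}\bigl(\C,\maK(L^2(G/\T))\bigr)$. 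Feeding this through the Kasparov descent for the (inner) $G$-action and the isomorphism $\maK(L^2(G/\T))\rtimes G\cong\maK(L^2(G/\T))\otimes C^*G$ of Corollary \ref{cor:G:C(M,F):interne} (used also in Lemma \ref{Compatibility}), and using naturality of the descent with respect to Morita equivalences, one gets that $\widehat{\mathrm{Ind}}^{\,G/\T,G}\bigl([\sigma(\overline{\partial})]\bigr)$ is the exterior Kasparov product of the stabilization equivalence $\mu\in\k\k\bigl(\C,\maK(L^2(G/\T))\bigr)$ with $\mathrm{id}_{C^*G}$, so that the Kasparov product by it coincides with $\bullet\underset{C^*(M,\maF)}{\otimes}\epsilon$. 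Substituting back gives the $\beta$-invariance, and combined with the first displayed identity the commutativity of the diagram follows.

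I expect the genuine work to be concentrated in the last step: verifying that the descended $G$-equivariant Dolbeault index class coincides, on the nose, with the very $\k\k$-equivalence $\epsilon$ produced in Theorem \ref{thm:induction:1} (both being, up to canonical identifications, the standard stabilization class). This is pure $\k\k$-theoretic bookkeeping — naturality of the Kasparov descent with respect to equivariant Morita equivalences and inner actions — but it has to be carried out carefully, exactly as in the non-foliated situation of \cite{atiyah1974elliptic}. The remaining ingredients, namely Theorem \ref{thm:induction:1}, the multiplicativity axiom, the commutativity of $\sharp$, and the vanishing of the higher Dolbeault cohomology of $G/\T$, are used as black boxes.
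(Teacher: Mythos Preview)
Your proof is correct and follows essentially the same strategy as the paper: apply multiplicativity (Theorem \ref{thm:multiplicativité:indice}) to express $\mathrm{Ind}^{\maF^Y}(\beta(a))$ in terms of $\mathrm{Ind}^{\maF}(a)$ via the Dolbeault index, and then combine with Theorem \ref{thm:induction:1}. The only difference is organizational. The paper absorbs the Morita equivalence $\epsilon$ into the identification $C^*(Y,\maF^Y)\simeq C^*(M,\maF)$ from the outset, so that $\widehat{\mathrm{Ind}}^{\,G}(\overline{\partial})$ lands directly in $\k\k(C^*G,C^*G)$ and equals the \emph{unit} $1_{C^*G}$ (since $j^G$ sends $1\in R(G)\simeq\k\k_\mathrm{G}(\C,\C)$ to $1_{C^*G}$); this immediately gives $\mathrm{Ind}^{\maF^Y}\circ\beta=\mathrm{Ind}^{\maF}$ and the bookkeeping step you flag as ``the genuine work'' disappears. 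Your version, tracking $\epsilon$ explicitly and then matching the descended Dolbeault class with it, is the same argument written out before making that identification.
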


\begin{proof}\ {We apply  the multiplicative property of our index morphism from Theorem \ref{thm:multiplicativité:indice}. In the notations of Theorem \ref{thm:multiplicativité:indice},  we take for $H$ the trivial group, for $(M, \maF)$ the $G$-manifold $G/\T$ with one leaf, and for $(M', \maF')$ our $G$-foliation here, that is the foliation $(M, \maF)$ used in the statement of Theorem \ref{thm-induction}. Then we obtain the commutativity of the following diagram (recall that $C^*(Y, \maF^Y)=\maK(L^2(G/\T))\otimes C^*(M, \maF)$ and hence can be replaced by $C^*(M, \maF)$):
\small{$$\xymatrix{\k_\mathrm{G} (T(G/\T))\otimes \k_\mathrm{G}^j (F_G)\ar[r]^{\hspace{1,2cm}\bullet \sharp\bullet} \ar[d]_{\widehat{\Ind}^{G} \otimes\; \Ind^{\maF}}~&~\k^j_\mathrm{G} (F^Y_G)\ar[d]^{\Ind^{\maF^Y}}\\
\k\k(C^*G, C^*G) \otimes \k\k^j(C^*G,C^*(M, \mathcal{F}))\ar[r]^{\hspace{1,2cm}\bullet \underset{C^*G}{\otimes} \bullet} ~&~\hspace{0cm}\k\k^j(C^*G,  C^*(M, \mathcal{F})).}
$$}

We recall that $\widehat{\Ind}^{G} = j^G\circ {\Ind}^{G}$ where  ${\Ind}^{G}: \k_\mathrm{G} (T(G/\T)) \rightarrow R(G)\simeq \k\k_\mathrm{G}(\C, \C)$ is the usual Atiyah-Singer $G$-index that we view as valued in the Kasparov group  $\k\k_\mathrm{G}(\C, \C)$ and  $j^G: \k\k_\mathrm{G}(\C, \C) \to \k\k (C^*G, C^*G)$ is the Kasparov descent map for the trivial $G$ action on $\C$. In particular, $\widehat{\Ind}^{G} (\overline{\partial})$ coincides with the unit of the ring $\k\k (C^*G, C^*G)$. 
If we thus apply this multiplicativity result to a given $a\in \k_\mathrm{G}^j(F_G)$ and to the Dolbeault symbol, then we get
$$
\mathrm{Ind}^{\mathcal{F}^Y}(\beta (a)) = \widehat{\Ind}^{G} (\overline{\partial})  \underset{C^*G}{\otimes} \ind(a) = \ind (a) \text{ and so }\mathrm{Ind}^{\mathcal{F}^Y}\circ \beta = \ind.
$$
The proof is now complete since we already proved in Theorem \ref{thm:induction:1}  the compatibility of the index morphism with the map $i_*$.}
\end{proof}

\section{Naturality of the index morphism}\label{Naturality}

We now apply the previous results  to give the allowed topological construction of an index map which will be compared with  our analytical index map from Proposition \ref{G-index}.

\subsection{Compatibility with Gysin maps}

{Let $\iota : (M, \mathcal{F}) \hookrightarrow (M',\mathcal{F}')$ be a foliated embedding  of $G$-foliations. So we assume  that the compact Lie group acts on $M$ and on $M'$ by leaf-preserving holonomy diffeomorphisms and that  $\iota: M\hookrightarrow M'$ is a $G$-equivariant embedding which sends leaves inside leaves.  We assume for simplicity that $M$ is compact, since this is the only needed situation for the proof of our index theorem. We denote by $N:=\iota^*TM'/TM$ the normal bundle to $\iota$. In view of the construction of the topological index in Subsection \ref{TopIndex}, we shall only need the case where the transverse bundles $\tau:=TM/F$ and $\tau'=TM'/F'$ do fit under $\iota$, i.e.  that $\iota^*\tau' \simeq \tau$. 
{In other words, the manifold $M$ embeds transversally in $M'$ to the foliation $\maF'$, i.e. $\forall x\in M$, $F_{\iota(x)} + d\iota(T_x M)=T_{\iota(x)}M'$ and the foliation $\maF=\iota^*\maF'$ is the pull back foliation, see \cite{Connes:surveyFoliation,ConnesSkandalis} for more details.}
 As a consequence, the $G$-equivariant  embedding $d\iota: F\to F'$, obtained  by differentiating $\iota$ and restricting to $F$, is $K$-oriented by a $G$-equivariant complex structure. Indeed, under this assumption, the normal bundle $N$ is identified with the normal bundle to the leaves of $\maF$ inside the leaves of $\maF'$, and it is easy then to see that the normal bundle $N'$ to $d\iota$ is isomorphic to the bundle $\pi_F^* (N\otimes \C)$ with $\pi_F: F\to M$ being the bundle projection. Following \cite{atiyah1974elliptic},  we deduce for any $j\in \Z_2$, a well defined Thom $R(G)$-morphism  
$$
\iota_! : \k^j_\mathrm{G}(F_G) \longrightarrow \k^j_\mathrm{G}(F'_G).
$$}
More precisely, denote by $\pi:N'\to F$ the bundle projection of the normal bundle $N'$ to $F$ in $F'$, and let $(\pi_F\circ \pi)^*(\Lambda^\bullet (N\otimes \mathbb{C}))$ be the associated exterior algebra over $N'$. Together with exterior multiplication by the underlying vector, this defines a complex over $N'$  which is exact off the zero section $F\subset N'$ and which is denoted $\lambda (N\otimes \C)$.  The usual Thom isomorphism $\K(F) \to \K(N')$ is defined by assigning to a given compactly supported $G$-complex $(E, \sigma)$ over $F$ the compactly supported $G$-complex over $N'$ given by $\pi^*(E,\sigma)\cdot \lambda (N\otimes \C)$. See \cite{Atiyah-Singer:I} for more details. On the other hand, the total space of the fibration $\pi:N'\to F$ is $G$-equivariantly diffeomorphic to a $G$-stable open tubular neighborhood $p:U'\to F$ of $d\iota (F)$ in $F'$ and this allows to define classically the Gysin map $\iota_!: \K (F)\to \K(F')$. As explained in \cite{atiyah1974elliptic}, if we only assume that $(E, \sigma)$ represents a class in $\K(F_G)$, then  the complex $\pi^*(E,\sigma)\cdot \lambda (N\otimes \C)$ over $N'$ extends to an element of $\K(F'_G)$. 
More precisely,  if we assume that $(E, \sigma)$ is only compactly supported when restricted to $F_G$, that is $\Supp (E, \sigma)\cap F_G$ is compact, then the $G$-complex $\pi^*(E,\sigma)\cdot \lambda (N\otimes \C)$ yields a compactly supported $G$-complex over an open subspace $U'_G$ of $F'_G$ defined as follows. If we identify similarly the total space $N$ with a $G$-stable open tubular neighborhood $U$ of $\iota (M)$ in $M'$, then the foliation $\maF'$ induces by restriction to the open submanifold $U$ a foliation $\maF^U$. Then $U'$ can be naturally identified with the total space  $F^U$ of the leafwise tangent bundle of the foliation $\maF^U$. The subspace $U'_G$ is then simply $F^U_G= F^U \cap T_G U$. To sum up, we deduce 
in this way a well defined Thom homomorphism of $R(G)$-modules $\K (F_G) \longrightarrow \K (U'_G)$ (see again \cite{atiyah1974elliptic}).
Since $U'_G$ is an open subspace of the locally compact space $F'_G$, the $C^*$-algebra $C_0(U)$ is a $G$-stable ideal in the  $G$-algebra $C_0(F_G)$ and we have the extension $R(G)$-morphism $\K (U'_G) \rightarrow \K (F'_G)$. Composing the Thom homomorphism with this extension map, we end up with our Gysin $R(G)$-morphism
$$
\iota_! : \K (F_G) \longrightarrow \K (F'_G).
$$
Starting with a class in $\k_\mathrm{G}^1(F_G)$ we get in the same way a class in $\k_\mathrm{G}^1(F'_G)$ and we finally get the morphism
$$
\iota_! : \k^j_\mathrm{G} (F_G) \longrightarrow \k^j_\mathrm{G} (F'_G)\text{ for }j\in \Z_2.
$$

%
%

The $G$-embedding $\iota$ gives a submersion $M\to M'/\maF'$ in the sense of \cite{ConnesSkandalis}, we hence deduce from  \cite[Section 4]{ConnesSkandalis} the well defined Connes-Skandalis Morita extension element $\epsilon_\iota \in \k\k (C^*(M, \maF), C^*(M', \maF'))$. Indeed, the submanifold $\iota (M)$ is  a transverse  $G$-submanifold in $(M', \maF')$ which inherits a foliation $\maF^{\iota (M)}$ which is diffeomorphic to $(M, \maF)$, hence identifying $C^*(M, \maF)$ with $C^*(\iota (M), \maF^{\iota(M)})$ and using the $G$-equivariant Morita equivalence of $(\iota (M), \maF^{\iota(M)})$ with a foliation $(U, \maF^U)$ obtained as an open tubular neighborhood of $\iota(M)$ in $M'$, we get the easy definition of the Connes-Skandalis map in our case.

\medskip

\begin{thm}\label{thm:Gysin:!}
{Let $(M', \maF')$ be a smooth $G$-foliation. Let $\iota: M \hookrightarrow M'$ be a $G$-equivariant embedding of a closed $G$-manifold $M$ which is transverse to the foliation $\maF'$ and denote by $\maF=\iota^*\maF'$  the pull back foliation. We assume that    $G$ acts  by leaf-preserving holonomy diffeomorphisms on the foliations $(M, \maF)$ and $(M', \maF')$.} Then for any $j\in \Z_2$, the following diagram commutes:
$$
\xymatrix{\k^j_\mathrm{G}(F_G) \ar[r]^{\iota_!} \ar[d]_{\Ind^\maF}& \k^j_\mathrm{G}(F'_G) \ar[d]^{\mathrm{Ind}^{\mathcal{F}'}} \\
\k\k^j(C^*G ,C^*(M,\mathcal{F}))\; \; \ar[r]_{\underset{C^*(M, \maF)}{\otimes}\epsilon_\iota}&\; \;    \k\k^j(C^*G ,C^*(M',\mathcal{F}')).}$$
Here  the index morphism $\mathrm{Ind}^{\mathcal{F}'}$ is  defined  according to Proposition \ref{Index-intrinseque}.
\end{thm}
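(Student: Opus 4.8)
The strategy is to reduce the foliated Gysin-compatibility statement to the already-established excision and multiplicativity axioms, following the now-classical pattern used by Atiyah in \cite{atiyah1974elliptic} for the trivial top-dimensional foliation. The starting point is the observation, made in the construction of $\iota_!$ above, that under the hypothesis $\iota^*\tau'\simeq\tau$ the normal bundle $N'$ to $d\iota:F\hookrightarrow F'$ is isomorphic to $\pi_F^*(N\otimes\C)$, so $d\iota$ is $K$-oriented by a $G$-equivariant complex structure, and the total space of $N'$ is $G$-equivariantly identified with an open tubular neighbourhood $U'=F^U$ of $d\iota(F)$ inside $F'$, where $(U,\maF^U)$ is the foliation induced on a $G$-stable tubular neighbourhood $U$ of $\iota(M)$ in $M'$. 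Thus $\iota_!$ factors as the composition of a Thom homomorphism $\mathrm{Thom}:\k^j_\mathrm{G}(F_G)\to\k^j_\mathrm{G}(F^U_G)$ with the open-inclusion extension map $\k^j_\mathrm{G}(F^U_G)\to\k^j_\mathrm{G}(F'_G)$, while on the $C^*$-algebra side $\epsilon_\iota$ factors through the Connes--Skandalis Morita extension $\k\k(C^*(M,\maF),C^*(U,\maF^U))$ (a $\k\k$-equivalence, since $(U,\maF^U)$ is $G$-equivariantly Morita equivalent to $(M,\maF)$) composed with the excision class $j_!\in\k\k(C^*(U,\maF^U),C^*(M',\maF'))$ attached to the open embedding $j:(U,\maF^U)\hookrightarrow(M',\maF')$.

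First I would split the diagram into two squares along the intermediate corner $\big(\k^j_\mathrm{G}(F^U_G),\ \k\k^j(C^*G,C^*(U,\maF^U))\big)$. The right-hand square is exactly the excision statement of Theorem \ref{thm:excision} applied to the open foliated $G$-embedding $j:(U,\maF^U)\hookrightarrow(M',\maF')$: it asserts $\mathrm{Ind}^{\maF'}\circ j_* = j_!\underset{C^*(U,\maF^U)}{\otimes}\mathrm{Ind}^{\maF^U}$, where $j_*$ is the open-extension map on equivariant $K$-theory. So that square commutes for free, provided one has matched $j_*$ with the second factor of $\iota_!$ and $j_!$ with the second factor of $\epsilon_\iota$, which is a direct unwinding of definitions.

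The content is therefore the left-hand square: the compatibility of $\mathrm{Ind}^{\maF^U}$ (the intrinsic index morphism of the open foliation from Proposition \ref{Index-intrinseque}) with the Thom homomorphism $\k^j_\mathrm{G}(F_G)\to\k^j_\mathrm{G}(F^U_G)$ and the Morita equivalence $\epsilon\in\k\k(C^*(M,\maF),C^*(U,\maF^U))$. Here I would invoke multiplicativity, Theorem \ref{thm:multiplicativité:indice}: the Thom class is the sharp product with the Bott element, i.e. the symbol class $[\sigma(\overline{\partial}_{N\otimes\C})]$ of the leafwise Dolbeault-type operator along the fibres of the complex vector bundle $N\otimes\C\to M$, which is a $G$-invariant leafwise elliptic (indeed $K$-theoretically fundamental) symbol whose fibrewise $G$-index is $1$. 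Concretely, $(U,\maF^U)$ is $G$-equivariantly identified with the total space of $N\otimes\C$ foliated by $\pi_F$-pullback of $\maF$, and a $G$-invariant leafwise $G$-transversally elliptic operator $A$ representing $a\in\k^j_\mathrm{G}(F_G)$ on $(M,\maF)$ sharp-producted with $\overline{\partial}_{N\otimes\C}$ gives a leafwise $G$-transversally elliptic operator on $(U,\maF^U)$ representing $\mathrm{Thom}(a)$. Multiplicativity then yields $\mathrm{Ind}^{\maF^U}(\mathrm{Thom}(a)) = \mathrm{Ind}^{\maF^U}(a\sharp[\sigma(\overline{\partial})])$ expressed as a Kasparov product of $\ind(a)$ with the index of the Bott symbol, and the standard fact (Bott periodicity for the $\k\k$-element of a leafwise Dolbeault complex on a complex vector bundle, as in \cite{ConnesSkandalis}) that this index realizes the Morita equivalence $\epsilon$ finishes the identification $\mathrm{Ind}^{\maF^U}\circ\mathrm{Thom} = \underset{C^*(M,\maF)}{\otimes}\epsilon\ \circ\ \Ind^\maF$. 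Associativity of the Kasparov product then glues the two squares: $\Ind^{\maF'}\circ\iota_! = j_!\underset{C^*(U,\maF^U)}{\otimes}\big(\epsilon\underset{C^*(M,\maF)}{\otimes}\Ind^\maF(\bullet)\big) = \big(\epsilon\underset{C^*(M,\maF)}{\otimes}j_!\big)\underset{C^*(M,\maF)}{\otimes}\Ind^\maF(\bullet) = \underset{C^*(M,\maF)}{\otimes}\epsilon_\iota\ \circ\ \Ind^\maF$, which is the asserted commutativity.

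The main obstacle I anticipate is not the diagram-chase but the bookkeeping needed to apply Theorem \ref{thm:multiplicativité:indice} in the present configuration: that theorem is stated for a Cartesian product $(M\times M',\maF\times\maF')$ with an auxiliary group $H$, whereas here one needs its ``twisted'' incarnation for the total space of a vector bundle (the Thom space) rather than a product, and one must check that the sharp-product symbol used to define $\iota_!$ agrees, up to homotopy of leafwise $G$-transversally elliptic symbols, with the quantization appearing in the multiplicativity proof. This requires a careful but routine localization argument near $\iota(M)$, using that $P_0$ can be chosen to be the identity outside a compact set as in Lemma \ref{lem:symbole:T^VU} and Proposition \ref{Index-intrinseque}, together with the patching of the Bott operator along the normal directions; I would handle it by first reducing, via excision, to a tubular neighbourhood and then citing the Atiyah computation \cite{atiyah1974elliptic} fibrewise over $M$ and along the leaves, observing that all the maps involved are $C^*(M,\maF)$-linear so the leafwise families behave exactly as in the classical case.
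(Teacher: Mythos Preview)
Your overall architecture is the same as the paper's: factor $\iota_!$ through a tubular neighbourhood $(U,\maF^U)$, use excision (Theorem~\ref{thm:excision}) for the open embedding $j:(U,\maF^U)\hookrightarrow(M',\maF')$ to handle the right-hand square, and reduce the left-hand square to a Thom/Bott computation via multiplicativity. That part is exactly right.

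Where you diverge from the paper is in the treatment of the Thom step, and in fact precisely at the point you flag as your main obstacle. You propose to apply multiplicativity directly to the twisted situation $(U,\maF^U)\simeq$ total space of $N\to M$, using a fibrewise Dolbeault operator on $N\otimes\C$, and then to patch things up by a ``routine localization argument''. The paper instead resolves this obstacle cleanly by the classical Atiyah--Singer frame-bundle trick: write $N=P\times_{O(n)}\R^n$ for the $G$-equivariant $O(n)$-principal frame bundle $q_1:P\to M$, so that $P\times\R^n$ is a genuine Cartesian product on which Theorem~\ref{thm:multiplicativité:indice} applies verbatim with the group $G\times O(n)$ and the Bott class $i_!(1)\in\k_{\mathrm{G\times O(n)}}(T\R^n)$. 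One then has the two free $O(n)$-quotients $q_1:P\to M$ and $q_2:P\times\R^n\to N$, and the free-action axiom (Theorem~\ref{thm:action:libre}) lets one descend the index identities from $P$ and $P\times\R^n$ to $M$ and $N$. The key inputs are $\mathrm{Ind}(i_!(1))=1\in R(G\times O(n))$ and the compatibility $\maE_{q_1}\underset{C^*(M,\maF)}{\otimes}\maE_\zeta=\mu(\R^n)\underset{C^*(\R^n,\R^n)}{\otimes}\maE_{q_2}$ of Morita classes, which follows from $\zeta\circ q_1=q_2\circ s_0$.

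What the paper's route buys is that no extension of the multiplicativity theorem to non-product bundles is needed: everything is reduced to the three axioms already proved (excision, multiplicativity for honest products, free actions). Your direct approach could be made to work, but the ``routine localization'' you allude to is essentially equivalent to proving a bundle-version of Theorem~\ref{thm:multiplicativité:indice}, which is more work than invoking the frame bundle.
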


\begin{proof}\
{For simplicity, we shall identify $(\iota (M), \maF^{\iota(M)})$ with $(M, \maF)$ and assume that $M$ is a smooth transverse submanifold to the foliation $(M', \maF')$ whose foliation $\maF$ coincides with the restricted foliation generated by $T\maF'\vert_M\cap TM$. By definition, $\iota_!$  is the composite map of a Thom morphism from $\K (F_G)$ to $\K(F^U_G)$ with the  extension corresponding to the inclusion of the open submanifold $U$. Notice that since the $G$-action on $(M, \maF)$ is a holonomy action, it is also a holonomy action on $(U, \maF^U)$. By the excision theorem, the index morphism does automatically respect the latter  extension map and the following diagram commutes: 
$$
\xymatrix{\k^j_\mathrm{G}(F^U_G) \ar[r]^{} \ar[d]_{\Ind^{\maF^U}}& \k^j_\mathrm{G}(F'_G) \ar[d]^{\mathrm{Ind}^{\mathcal{F}'}} \\
\k\k^j(C^*G ,C^*(U,\mathcal{F}^U))\; \; \ar[r]^{}&\; \;    \k\k^j(C^*G ,C^*(M',\mathcal{F}')).}
$$
Now the open submanifold $U$ can be identified with a vector bundle $N\to M$ which is the normal bundle to $M$ in $M'$. Moreover, the foliation $\maF^U$ is then identified with the foliation of $N$ whose leaves are given by the total spaces of the restrictions of the bundle $\pi_N: N\to M$ to the leaves of $(M, \maF)$. It is thus sufficient to show the theorem in the case of a real $G$-vector bundle $N$ over $M$ foliated by $F^N:= \ker(TN \rightarrow TM/F)\simeq  \pi_N^*(F\oplus N)$ and with $\iota_! : \k_\mathrm{G} (F_G) \rightarrow \K (F^N_G)$ being the Thom homomorphism associated to the $0$-section $\zeta : M \rightarrow N$.
Following \cite{atiyah1974elliptic}, we can write $N = P\times_{O(n)} \mathbb{R}^n$, where $q_1:P\to M$ is a $G$-equivariant $O(n)$-principal bundle over $M$, foliated by $F^P:=\ker(TP \rightarrow TM/F)$. Denote by $q_2 : P \times \mathbb{R}^n \rightarrow N$ the  $G$-equivariant projection corresponding to moding out by the action of $O(n)$. 
Let  $\mathcal{F}^{P\times \mathbb{R}^n}$ be the foliation given by $\maF^P \times \mathbb{R}^n$ on $P \times \mathbb{R}^n$ and by $F^{P\times \mathbb{R}^n}$ the tangent bundle to this foliation. }
%
%
%

By using the product defined in \eqref{product} with $G$ replaced by $G\times O(n)$ and with trivial $H$, we obtain  the well defined product: 
\begin{equation}\label{productRnO(n)}
\k^j_{\mathrm{G\times O(n)}}(F^P_{G\times O(n)}) \otimes \k_\mathrm{G\times O(n)}(T\mathbb{R}^n)\longrightarrow \k^j_\mathrm{G\times O(n)}(F^{P\times \mathbb{R}^n}_{G\times O(n)}).
\end{equation}
But since $O(n)$ acts freely on $P$, we also have  the following identifications:
$$
q_1^* : \k^j_\mathrm{G}(F_G) \stackrel{\simeq} {\longrightarrow}\k^j_\mathrm{G\times O(n)}(F^P_{G\times O(n)})\text{ and }
q_2^* : \k^j_\mathrm{G} (F^N_G) \stackrel{\simeq}{\longrightarrow} \k^j_\mathrm{G\times O(n)}(F^{P\times \mathbb{R}^n}_{G\times O(n)}).
$$
Therefore, we end up with the product: 
\begin{equation}\label{productRn}
\k^j_\mathrm{G}(F_G) \otimes \k_\mathrm{G\times O(n)} (T\mathbb{R}^n)\longrightarrow \k^j_\mathrm{G} (F^N_G).
\end{equation}
Since $G$ acts trivially on $\R^n$, the inclusion $i:\{0\}\hookrightarrow \R^n$ induces the Bott morphism $i_! : R(G\times O(n))\rightarrow \k_\mathrm{G\times O(n)}(T\mathbb{R}^n)$ and we have $\mathrm{Ind}(i_!(1))=1 \in \k\k_\mathrm{G\times O(n)}(\mathbb{C},\mathbb{C})$, see \cite{Atiyah-Singer:I}. Now, multiplication by $i_!(1)$ in \eqref{productRn} is exactly the Thom morphism that we denote $\zeta_!$ since $\zeta$ is the zero section here, i.e.
$$
\zeta_!: \k^j_\mathrm{G} (F_G) \longrightarrow \k^j_\mathrm{G}(F^N_G).
$$
Moreover, we may as well consider the multiplication by $i_!(1)$ in the product \eqref{productRnO(n)}, and we then obviously have the following commutative diagram:
$$\xymatrix{ \k^j_\mathrm{G}(F_G) \ar[r]^{\zeta_!} \ar[d]_{q_1^*}& \k^j_\mathrm{G}(F^N_G)\ar[d]^{q_2^*} \\
\k^j_\mathrm{G\times O(n)}(F_{G\times O(n)}^P)\ar[r]_{\cdot i_!(1)} & \k^j_\mathrm{G\times O(n)}(F_{G\times O(n)}^{P\times \mathbb{R}^n}) 
}$$
We deduce that for any $a\in \k^j_\mathrm{G}(F_G)$:
$$
\mathrm{Ind}^{\maF^{P\times \mathbb{R}^n}}(q_2^*(\zeta_!(a))=\mathrm{Ind}^{\maF^{P\times \mathbb{R}^n}} (q_1^*(a)\cdot i_!(1)) = j^{G\times O(n)}\big(1\big) \underset{C^*(G\times O(n))}{\otimes}\mathrm{Ind}^{\mathcal{F}^\mathrm{P}}(q_1^*(a)) \otimes \mu(\mathbb{R}^n),
$$ 
where  the last equality is a consequence of the multiplicativity axiom satisfied by our index morphism, as stated in Theorem \ref{thm:multiplicativité:indice}, and where $\mu(\mathbb{R}^n) \in \k\k(\C,C^*(\R^n\times \R^n))$ is the Morita equivalence.
On the other hand,  by the axiom for free actions stated in Theorem \ref{thm:action:libre}, and denoting by $\chi_1^{O(n)}$  the trivial representation of $O(n)$, we  have:
\begin{multline*}\ind (a )=\chi_1^{O(n)} \underset{C^*O(n)}{\otimes} \mathrm{Ind}^{\mathcal{F}^\mathrm{P}}(q_1^*(a)) \underset{C^*(P,\maF^P)}{\otimes}\mathcal{E}_{q_1}, \text{ while }\\
$$\mathrm{Ind}^{\mathcal{F}^\mathrm{N}}(\zeta_!(a))=\chi_1^{O(n)}\underset{C^*O(n)}{\otimes} \mathrm{Ind}^{\mathcal{F}^\mathrm{P\times \mathbb{R}^n}}(q_2^*(\zeta_!(a))\underset{C^*(P \times \R^n,\maF^{P \times \mathbb{R}^n})}{\otimes} \maE_{q_2}.
\end{multline*}
We finally conclude by gathering the previous relations as follows:
\begin{eqnarray*}
\mathrm{Ind}^{\mathcal{F}^\mathrm{N}}(\zeta_!(a))&= & \chi_1^{O(n)} \underset{C^*O(n)}{\otimes}\bigg( \mathrm{Ind}^{\mathcal{F}^\mathrm{P}}(q_1^*(a)) \otimes \mu(\mathbb{R}^n)\bigg)\underset{C^*(P \times \R^n,\maF^{P \times \mathbb{R}^n})}{\otimes} \maE_{q_2},\\
&=& \chi_1^{O(n)} \underset{C^*O(n)}{\otimes} \mathrm{Ind}^{\mathcal{F}^\mathrm{P}}(q_1^*(a)) \underset{C^*(P,\maF^P)}{\otimes} \big(\mu(\mathbb{R}^n)\underset{C^*(\R^n,\mathbb{R}^n)}{\otimes} \maE_{q_2}\big)\\
& = &  \chi_1^{O(n)} \underset{C^*O(n)}{\otimes} \mathrm{Ind}^{\mathcal{F}^\mathrm{P}}(q_1^*(a)) \underset{C^*(P,\maF^P)}{\otimes} \big(\mathcal{E}_{q_1}\underset{C^*(M,\mathcal{F})}{\otimes}\mathcal{E}_{\zeta} \big)\\
& = & \left(\chi_1^{O(n)} \underset{C^*O(n)}{\otimes} \mathrm{Ind}^{\mathcal{F}^\mathrm{P}}(q_1^*(a)) \underset{C^*(P,\maF^P)}{\otimes} \mathcal{E}_{q_1}\right) \underset{C^*(M,\mathcal{F})}{\otimes}\mathcal{E}_{\zeta}\\
& = & \ind (a )\underset{C^*(M,\mathcal{F})}{\otimes}\mathcal{E}_{\zeta}.
\end{eqnarray*}
We have used that  $\mathcal{E}_{q_1}\underset{C^*(M,\mathcal{F})}{\otimes}\mathcal{E}_{\zeta} = \mu(\R^n) \underset{C^*(\R^n,\mathbb{R}^n)}{\otimes}  \mathcal{E}_{q_2}$ which is a consequence of the equality $
\zeta\circ q_1 = q_2\circ s_0$ where $s_0: P \hookrightarrow P \times \R^n$ is the zero section of this trivial bundle.

\end{proof}

{
\subsection{A  topological index morphism}}\label{TopIndex}

We prove the following important proposition.

\medskip

\begin{prop}\label{Factorisation}\
{Let  $(M, \maF)$ be a smooth foliated  riemannian $G$-manifold such that $G$ acts by leaf-preserving holonomy diffeomorphisms. Assume that we are given an isometric $G$-embedding  $ i : M\hookrightarrow E$ of $M$ in a finite dimensional euclidean $G$-representation $E$. 
Let $A:=\{(x,\xi,\eta) \in M\times TE, \eta \in di_x(F_x)^\perp\}$ and $\iota : F \hookrightarrow A$ be the $G$-embedding given by $\iota(x,\xi)=(x,\xi,0)$.
Then
\begin{enumerate}
\item The map  $\iota$ is $\k$-oriented by a complex $G$-structure;
\item $A$ is diffeomorphic to a smooth $G$-submanifold $\maA$ of the cartesian product $M\times T(E)$, which  is an open   transversal  to the smooth foliation $\maF\times 0$;
\item Setting $\maA_G:=\maA\cap \left(M\times T_G(E)\right)$,  the usual Thom construction yields, for $j\in \Z_2$, a well defined $R(G)$-homomorphism $\iota_! \; : \; \k^j_\mathrm{G}(F_G) \longrightarrow  \k^j_\mathrm{G}(\maA_G)$;
%
\item The space $\maA_G$ is a topological ($G$-stable) transversal to the {foliated space} $(M\times T_G(E), \maF\times 0)$.
 \end{enumerate}
}
\end{prop}
}

\begin{proof}
{A direct inspection shows that the normal bundle to the $G$-embedding $\iota$ is isomorphic to $di_x(F_x)^\perp \oplus di_x(F_x)^\perp$.
This gives the first item. For $\varepsilon >0$, denote by $A_\varepsilon$ the set of points $(x,v,w) \in A$ with $\|w\|<\varepsilon$. The map $A \to M\times TE$ given by $(x,v,w)\mapsto (x,v,i(x)+w)$ then clearly identifies, for $\varepsilon >0$ small enough,  $A_\varepsilon$ with  an open transversal to $\tilde{\maF} = \maF \times \{0\}$ in $M\times TE$. The transversal condition is given in $(x,\xi,0)$ by $F_x + T_{(x,\xi,0)}A=F_x + (T_xM) \oplus T_xE \oplus di_x(F_x)^\perp = T_{(x,\xi,0)}(M\times TE)$.} 
The third and fourth items  are eventually  easily deduced by standard arguments that we already explained in the previous section, see  \cite{atiyah1974elliptic} and \cite{ConnesSkandalis}.  
\end{proof}

{We point out that, exactly as in the case of smooth foliations, the topological transversal $\maA_G$  to the foliated space $(M\times T_G(E), \maF\times 0)$, obtained in the fourth item of  Proposition \ref{Factorisation},  gives rise to a well defined quasi-trivial Morita extension class $\epsilon \in \k\k (C_0(\maA_G), C^*(M\times T_G(E), \maF\times 0))$ and hence the $R(G)$-morphism
$$
\epsilon\;  : \; \k^j_\mathrm{G}(\maA_G) \longrightarrow  \k^\mathrm{G}_j \left(C_0(T_G(E), C^*(M, \maF))\right).
$$
The class $\epsilon$ can be described as follows. By using that the normal bundle to $\maA$ in $M\times T(E)$ is isomorphic to the vector bundle $F\times 0$ that we restrict to $\maA$, we may  consider an open tubular neighborhood $\maN$ of $\maA$ in $M\times T(E)$ which is a disc-bundle over $\maA$ whose fibers are small disc-placques which correspond to the restricted foliation $\maF\times 0$ to $\maN$. It is then clear by construction that $\maN_G:=\maN\cap (M\times T_G(E))$ is also a disc-fibration by the same placques but now over the space $\maA_G$, so the base is no more a smooth manifold. The $C^*$-algebra $C^*(\maN_G, \maF^{\maN_G})$ of the lamination $\maF^{\maN_G}$ of the open subspace $\maN_G$ which is the restriction of the foliation $\maF\times 0$ of $M\times T_G(E)$, is then Morita equivalent to $C_0(\maA_G)$. Hence using the trivial extension map 
$$
\k_j^\mathrm{G} (C^*(\maN_G, \maF^{\maN_G})) \longrightarrow \k_j^\mathrm{G} (C^*(M\times T_G(E), \maF\times 0)) \simeq \k^\mathrm{G}_j \left(C_0(T_G(E), C^*(M, \maF))\right),
$$
corresponding to the open subspace $\maN_G$ in the space $M\times T_G(E)$, 
we finally obtain the allowed quasi-trivial $G$-equivariant extension map  $\epsilon$.}

Following Kasparov, we define  a Dirac element $[D_E]\in \k\k (C_0(T_G(E))\rtimes G, \C)$ which, according to the main result of \cite{Kasparov:KKindex}, computes the index of $G$-invariant $G$-transversally elliptic operators on the orthogonal $G$-representation $E$, through the descent morphism $j^G$. There are though some technical details which are passed over here and which would need to be expanded elsewhere. One especially needs to  replace $C_0(T_GE)$ by a better (although non-separable) symbol $C^*$-algebra denoted by $\mathfrak{S}_G(E)$  in \cite{Kasparov:KKindex}, and therefore one needs as well  to use the extended version of Kasparov's $\k\k$-theory, adapted to non-separable algebras. All these details with their generalizations to foliations will be dealt with in a forthcoming paper. 

We only mention here  that since $C^*(M, \maF)$ is endowed with the trivial $G$-action,  we have a well defined morphism
$$
\k_j^\mathrm{G} (C_0(T_G(E))\otimes C^*(M, \maF)) \stackrel{j^G}{\longrightarrow} \k\k^j (C^*G, [C_0(T_G(E))\rtimes G] \otimes C^*(M, \maF))  \stackrel{\otimes [D_E]}{\longrightarrow} \k\k^j (C^*G, C^*(M, \maF)),
$$
that we denote by $\partial_E\otimes C^*(M, \maF)$. Roughly speaking and using the main result of \cite{Kasparov:KKindex}, the map $\partial_E\otimes C^*(M, \maF)$ is the expected index map for $G$-invariant $G$-transversally elliptic operators on $E$ with coefficients in the $G$-trivial $C^*$-algebra $C^*(M, \maF)$. 
%

\begin{remarque}
The  composite morphism $\Ind^{\maF, top}$ :
$$
\Ind^{\maF, top}\; : \; \k^j_\mathrm{G}(F_G)  \stackrel{\iota_!} {\longrightarrow}\k^j_\mathrm{G}(\maA_G) \stackrel{\epsilon} {\longrightarrow}\k_j^\mathrm{G} (C_0(T_G(E))\otimes C^*(M, \maF)) \stackrel{\partial_E\otimes C^*(M, \maF)}{\longrightarrow} \k\k^j (C^*G, C^*(M, \maF))
$$
is independent of the choice  of euclidean $G$-representation $E$ with the isometric $G$-embedding $i$.
\end{remarque}

{\begin{definition}
The morphism  
$$
\Ind^{\maF, top}\; : \; \k^j_\mathrm{G}(F_G) \longrightarrow \k\k^j (C^*G, C^*(M, \maF)),
$$
will be called the topological index morphism for $G$-invariant leafwise $G$-transversally elliptic operators. 
\end{definition}}

%
%
If $G$ is the trivial group then the topological index morphism  reduces to the topological index morphism for leafwise elliptic operators as defined in \cite{ConnesSkandalis}, and it then coincides with the analytic index morphism $\Ind^\maF$, this is precisely  the  Connes-Skandalis index theorem. For general $G$ and when the foliation is top dimensional,  the naturality of the index distribution proved in \cite{atiyah1974elliptic} together with the Kasparov index theorem proved in \cite{Kasparov:KKindex} implies again the equality of the topological index morphism with the analytical one. 
 
\begin{remarque}
When $G$ and $M$ are no more compact, but the $G$-action is supposed to be proper and cocompact as in \cite{Kasparov:KKindex}, then the proofs given here allow to still define the index morphism 
$$
\Ind^{M, \maF} : \k^j_\mathrm{G} (F_G) \longrightarrow \k\k^j (C_0(M)\rtimes G, C^*(M, \maF)).
$$
\end{remarque}

We finally point out that  most of the constructions given in the present paper apply, with minor changes, to the more general category of foliated spaces ({e.g.} laminations) as studied in \cite{MooreSchochet} using sections and operators which are leafwise smooth and transversally continuous. However, the construction of the topological index for instance is not clear in general   since the $G$-embedding in $E$ is not ensured a priori.

\appendix

\section{Unbounded version of the index class}
We  define in this appendix the index class for operators of  order $1$, using the unbounded version of Kasparov's theory  \cite{baaj1983theorie}. The unbounded version simplifies the computation of some Kasparov products and was used in the present paper. 
In order, to achieve this construction, we shall need the following independent theorem which generalizes results from \cite{vassout2001feuilletages}, see also \cite{vassout2006unbounded}. {We assume as in this whole paper that $G$ acts on $(M, \maF)$ by holonomy diffeomorphisms. This is true for instance when $G$ is connected.} Most of this appendix will be devoted to the proof of the following

\begin{thm}\label{thm:regular}
Let $P$ be a $G$-invariant leafwise $G$-transversally elliptic operator. Then the closure $\overline{P}$ of $P$ is a regular operator. 
{Moreover, the adjoint operator $P^*$ of $P$ coincides with the closure of the leafwise formal adjoint $P^\natural$ of $P$, i.e. $P^*=\overline{P^\natural}$.}
\end{thm}

If $P$  is a leafwise  pseudodifferential operator of order $m>0$ on $M$ acting between the hermitian bundles $E$ and $E'$, then we have 
$$
\langle P\eta,\eta'\rangle = \langle \eta ,P^\natural \eta'\rangle,\qquad \eta \in C^\infty_c(\mathcal{G}, r^*E)\text{ and } \eta' \in C^\infty_c(\mathcal{G}, r^*E'). 
$$
The Hilbert module completions of $C^\infty_c(\mathcal{G}, r^*E)$ and $C^\infty_c(\mathcal{G}, r^*E')$ are respectively denoted $\maE$ and $\maE'$. 
The operator $P$ is densely defined with domain $C^\infty_c(\mathcal{G}, r^*E)$ and has a well defined closure $\overline{P}$. The same observation holds for the leafwise pseudodifferential operator $P^\natural$.
Then 
$\overline{P^\natural}\subset P^*$. 

We shall use notations and discussions from \cite{vassout2001feuilletages,vassout2006unbounded} and especially the following
\begin{lem}\cite{vassout2006unbounded}\label{lem:20:vassout}
Let $A$, $B$ be compactly supported {leafwise pseudodifferential} operators such that $\mathrm{ord}\ A + \mathrm{ord}\ B\leq 0$ and $\mathrm{ord}\ B \leq  0$.
Then we have $\overline{AB} = \overline{A} \; \overline{B}$, an equality of adjointable operators.
\end{lem}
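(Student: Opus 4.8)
The plan is to reduce the identity to the closedness of $\bar A$ together with the boundedness — forced by the order hypotheses — of both $\bar B$ and $\overline{AB}$; we follow \cite{vassout2006unbounded}. Keeping the convention $E=M\times\mathbb{C}$, so that the relevant Hilbert module is $\maE=C^*(M,\maF)$ with core $C_c^\infty(\maG)$, one first records the two inputs from the leafwise pseudodifferential calculus on $\maG$ (see \cite{Connes:integration:non:commutative,ConnesSkandalis,NWX}): a compactly (hence properly) supported operator of order $m$ maps $C_c^\infty(\maG)$ into itself, and when $m\leq 0$ it extends to an adjointable operator on $\maE$. Applying this to $B$ (order $\leq 0$) gives an adjointable $\bar B$, and applying it to $AB$, whose order is $\mathrm{ord}\,A+\mathrm{ord}\,B\leq 0$, gives an adjointable, in particular everywhere-defined and bounded, operator $\overline{AB}$; moreover on the core one has $(AB)\xi=A(B\xi)$ with $B\xi\in C_c^\infty(\maG)$.

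The second, and only nontrivial, step is the inclusion $\bar A\bar B\subseteq\overline{AB}$. Let $u\in\maE$ be arbitrary and choose $u_n\in C_c^\infty(\maG)$ with $\|u_n-u\|\to 0$. Then $Bu_n\to\bar B u$ by boundedness of $\bar B$, while $\bar A(Bu_n)=ABu_n=\overline{AB}\,u_n\to\overline{AB}\,u$ by boundedness of $\overline{AB}$. Since $\bar A$ is closed, this forces $\bar B u\in\mathrm{dom}(\bar A)$ and $\bar A\bar B u=\overline{AB}\,u$. As $u$ was arbitrary, $\mathrm{dom}(\bar A\bar B)=\maE$ and $\bar A\bar B=\overline{AB}$ as operators on $\maE$; note that the hypothesis $\mathrm{ord}\,B\leq 0$ is used precisely to make $\bar B$ bounded, without which the convergence $Bu_n\to\bar B u$ would fail. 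Finally, adjointability of the common operator and the formula $(\overline{AB})^*=\overline{B^\natural A^\natural}$ follow from $\mathrm{ord}(B^\natural A^\natural)=\mathrm{ord}(AB)\leq 0$ together with the continuity of the formal-adjoint operation on the calculus and the identity $\langle AB\eta,\eta'\rangle=\langle\eta,B^\natural A^\natural\eta'\rangle$ on $C_c^\infty(\maG)$, exactly as in the discussion preceding the lemma.

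The step I expect to require the most care is not the convergence argument above — which is elementary once the calculus facts are in place — but the verification of those facts in the groupoid/Hilbert-module framework: that properly supported $\maG$-operators preserve $C_c^\infty(\maG)$, that the order is subadditive under composition and stable under formal adjoints, and that order-$\leq 0$ operators extend to adjointable operators on $\maE$. These are available in \cite{Connes:integration:non:commutative,ConnesSkandalis,NWX,vassout2006unbounded}, and it is worth emphasizing that for this particular statement no regularity of $\bar A$ is needed, regularity of $G$-invariant leafwise $G$-transversally elliptic operators being invoked separately in this appendix to build the unbounded index cycle for positive-order operators.
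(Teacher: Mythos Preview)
Your argument is correct and is essentially the standard one (as in \cite{vassout2006unbounded}): boundedness of $\bar B$ and $\overline{AB}$, together with closedness of $\bar A$, forces $\bar B u\in\mathrm{dom}(\bar A)$ with $\bar A\bar B u=\overline{AB}\,u$ for every $u$. Note that the paper does not give its own proof of this lemma --- it is simply quoted from \cite{vassout2006unbounded} --- so there is nothing to compare against beyond observing that your approach matches the cited source.
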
 
{The following proposition from \cite{vassout2006unbounded} will also be needed:
\begin{prop}\cite{vassout2006unbounded}\label{prop:21:vassout} Let $P$ be an elliptic, compactly supported {leafwise pseudodifferential operator}. Then the operator $\overline{P}$ is a regular operator.
\end{prop}
}

{Recall that if $\maE$ is a Hilbert $G$-module over a $G$-trivial $C^*$-algebra $A$, and   $ \pi : C^*G \rightarrow \maL_A(\maE)$ is the induced representation, then for any $\alpha \in \hat{G}$ the $\alpha$-isotypical component $\maE_\alpha$ of $\maE$ is the Hilbert $A$-submodule which is the image of the projection $p_\alpha=\pi ((\dim \alpha)\,  \chi_\alpha)$. Here $\chi_\alpha$ is  the character of $\alpha$.  
}

\begin{prop}\label{prop:composante:isotypique}
{Let $A$ be a $G$-trivial $C^*$-algebra and let $\maE, \maE'$ be  Hilbert $G$-modules on $A$. Denote as above by $\maE_\alpha$ and $\maE'_\alpha$ the $\alpha$-isotypical component of $\maE$ and $\maE'$ respectively, and by $p_\alpha$ the corresponding projections. We also denote by $i_\alpha$ the inclusion of $\maE_\alpha$ in $\maE$ as well as the inclusion of $\maE'_\alpha$ in $\maE'$. Then 
\begin{enumerate}
\item $\maE = \bigoplus \limits_{\alpha \in \hat{G}} \maE_\alpha$ as a countable Hilbert $A$-module decomposition and the same holds for $\maE'$. In particular,  any $G$-invariant adjointable operator from $\maE$  to $\maE'$ is diagonal with respect to  this decomposition, i.e. $\maL_A(\maE, \maE')^G=\bigoplus \limits_{\alpha \in \hat{G}} \maL_A(\maE_\alpha, \maE'_\alpha)^G$.
\item {Let $P=(P, \operatorname{dom} (P))$ be a closable $G$-invariant operator from $\maE$ to $\maE'$ such that $(i_\alpha p_\alpha)\operatorname{dom} (P) \subset \operatorname{dom} (P)$, then $P_\alpha=(P\vert_{\maE_\alpha}, p_\alpha \operatorname{dom} (P))$ is closable and $\overline{P_\alpha}=\left(\overline{P}\right)_\alpha$, for any $\alpha \in \hat{G}$.}
\end{enumerate}}
\end{prop}

\begin{remarque}
Hence, the $\alpha$-component of any closed $G$-invariant operator is closed. Indeed, in this case the inclusion $(i_\alpha p_\alpha)\operatorname{dom} (P) \subset \operatorname{dom} (P)$ is automatically fulfilled.
\end{remarque}

\begin{proof}
1. Notice first that $G$ is a compact Lie group therefore $\hat{G}$ is countable.
Our hypothesis that the $G$-action on $A$ is trivial implies that $G$ acts by {adjointable unitaries $(U_g)_{g\in G}$} on $\maE$. This action extends to an adjointable representation $\pi$ of $C^*G$ given for any $\varphi \in C(G)$ {by $ \pi(\varphi)\eta=\int_G \varphi(g) U_g\eta\, dg.$ }
Now recall that $[(\dim \alpha)\, \chi_\alpha]_{\alpha\in \hat{G}}$  is a family of projections  in $C^*G$ such that $((\dim \alpha)\, \chi_\alpha) ((\dim \beta)\, \chi_\beta) = 0$ whenever $\alpha\neq \beta$ and $\Id_{C^*G} = \sum \limits_{\alpha \in \hat{G}} (\dim \alpha)\, \chi_\alpha$ as a multiplier of $C^*G$. Therefore, the family of (adjointable) projections $(p_\alpha)_{\alpha\in \hat{G}}$ defined by $p_\alpha =  \pi((\dim \alpha)\, \chi_\alpha)$  satisfies the same properties, in particular one has $
\Id_\maE = \sum \limits_{\alpha \in \hat{G}} p_\alpha \text{ and } p_\alpha p_\beta = 0 \text{ if }\alpha\neq \beta.$  Notice that under our assumptions, $\pi (C^*G)\maE$ is automatically dense in $\maE$ as can be seen by using an approximate identity in $C^*G$ composed of continuous non-negative functions and the precise expression of $\pi$.
%
Since a $G$-invariant operator $T\in \maL_A({\maE, \maE'})^G$ commutes with the representations both denoted  $\pi$ in $\maE$ and $\maE'$, it commutes with each $p_\alpha$ and hence the first item is proved. 

2. We have {$P_\alpha = {p_\alpha P i_\alpha}$ where again we have denoted by the same letters $p_\alpha$ and $i_\alpha$  the operators for $\maE$ and $\maE'$. In particular $P_\alpha p_\alpha=p_\alpha P$  on $\operatorname{dom} (P)$ and $i_\alpha P_\alpha = P i_\alpha$  on $\operatorname{dom} (P_\alpha)=p_\alpha \operatorname{dom} (P)$. 
The graph of $P_\alpha$ is the image under $p_\alpha\times p_\alpha$ of the graph of $P$. Since $p_\alpha$ and $i_\alpha$ are continuous, the conclusion follows by using that $\operatorname{dom} (P)$ is preseved by $i_\alpha p_\alpha$.} \\
\end{proof}

It is worthpointing out that under the assumptions of Proposition \ref{prop:composante:isotypique}, if the closure $\overline{P}$ of $P$ is regular then for any $\alpha\in \widehat{G}$, $\overline{P}_\alpha=\overline{P_\alpha}$ is regular.  The converse is also true and is used below to deduce our theorem \ref{thm:regular}. Indeed,  notice the following general observation whose proof is a direct inspection of the graphs:

\begin{lem}\label{StandardLemma}
Let $(\maE_k)_{k\in \N}$ and $(\maE'_k)_{k\in \N}$ be two sequences of Hilbert $A$-modules and let $\maE=\oplus_{k\geq 0} \maE_k$ and $\maE'=\oplus_{k\geq 0} \maE'_k$ be the Hilbert direct sums. Let $(T_k: \operatorname{dom} (T_k)\subset \maE_k\to \maE'_k)_{k\geq 0}$ be a sequence of   regular operators. Let $T: \operatorname{dom} (T)\subset \maE  \to \maE'$ be the direct sum operator given by
$$
\operatorname{dom} (T) :=\{x=(x_k)_{k\geq 0} \in \oplus_k \operatorname{dom} (T_k)\; \vert\;   (T_k (x_k))_{k\geq 0}\in \maE'\}
\text{ and }
 T((x_k)_{k\geq 0}) = (T_k (x_k))_{k\geq 0}.
$$
Then $(T, \operatorname{dom} (T))$ is a  regular  operator. 
\end{lem}
\noindent

\medskip 

We are now in position to prove our Theorem  \ref{thm:regular}.

\begin{proof}[Proof of Theorem \ref{thm:regular}]
Let $\Delta_G$ be the Laplace Casimir operator along the orbits introduced in Subsection \ref{section:moment:map}, i.e. $\Delta_G = \sum \mathscr{L}(V_k)^2$ for an orthonormal basis  $(V_k)_k$ of $\mathfrak{g}$, see again Subsection \ref{section:moment:map}. We use again the same notation for the Casimir operators on both $E$ and $E'$. Set $B:=P^\natural P+\Delta_G^m$ and $ C=PP^\natural +\Delta_G^m$, then $B$ and $C$ are  $G$-invariant leafwise elliptic operators of order $2m$. Indeed, {the principal symbol of $B$ is a pointwise non-negative linear map and we have for any $(x, \xi)\in F$:
$$
 \langle \sigma (B) (x, \xi)u, u\rangle =  \vert \sigma (P) (x, \xi) u \vert ^2 + q_x(\xi)^m \vert u\vert^2.
$$
Since $q_x(\xi) =0$ only happens if $\xi\in (F_G)_x$, and since $\sigma (P) (x, \xi)$ is invertible for any $\xi\in  (F_G)_x\smallsetminus 0$ by the $G$-transverse ellipticity of $P$, we deduce that $\sigma (B) (x, \xi)$ is invertible for any  $\xi\in F_x\smallsetminus 0$. The argument for $C$ is completely similar.}
Denote then by $Q$ a $G$-invariant leafwise parametrix for $B$ and similarly 
 by $\tilde{Q}$ a $G$-invariant leafwise parametrix for $C$. Set  
$$
R= \id - BQ, \; S=\id - QB, \; \tilde{R}= \id - C\tilde{Q} \;\text{ and }\;   \tilde{S}=\id - \tilde{Q}C.
$$
 {Since $S$ and $Q$ are negative order operators they extend to adjointable operators. In particular, for any $\lambda\in \R$, $\overline{S} +\lambda \overline{Q} = \overline{S} +\lambda \overline{Q} =\overline{S +\lambda Q}$. We have $\operatorname{ord}(P) + \operatorname{ord}(QP^\natural)=0$ and $\operatorname{ord}(P) + \operatorname{ord}(S+\lambda Q) \leq 0$ therefore by Lemma \ref{lem:20:vassout} we obtain 
$$
\overline{P} \; \overline{QP^\natural}= \overline{P Q P^\natural}\text{ and }\overline{P} (\overline{S}+\lambda \overline{Q})=\overline{P(S+\lambda Q)}.
$$ 
Therefore, $\operatorname{im}(\overline{Q P^\natural}) + \operatorname{im}(\overline{S} +\lambda \overline{Q}) \subset \operatorname{dom}(\overline{P}).$}
{Denote by $\widehat{\Delta}_G$ the Laplacian on $G$, we have that $\Delta_G^m \pi(\varphi)= \pi(\widehat{\Delta}_G^m \varphi)$ for any $\varphi \in C^\infty(G)$. But  $\widehat{\Delta}_G\chi_\alpha = \lambda_\alpha \chi_\alpha$ with $\lambda_\alpha \geq 0$ only depending on $\alpha$ and positive if $\alpha$ is different from the trivial representation. Therefore}
$$
{(\Delta_G^m)_\alpha=\Delta_G^m p_\alpha i_\alpha= \pi\left((\dim\alpha) \widehat{\Delta}_G^m \chi_\alpha\right) i_\alpha=\lambda_\alpha^m \Id_{\maE_\alpha}.}
$$
On the other hand composition with $p_\alpha$ on the left and with $i_\alpha$ on the right in the first parametrix relation yields using Proposition \ref{prop:composante:isotypique} to the following relation on $p_\alpha C_c^\infty (\maG, r^*E)$:
$$
(Q  P^\natural)_\alpha P_\alpha = p_\alpha - \lambda_\alpha^m Q_\alpha - S_\alpha.
$$
This allows to  prove the inclusion  $\operatorname{dom}(\overline{P_\alpha})\subset \operatorname{im}(\overline{Q P^\natural}_\alpha) + \operatorname{im}(\overline{S_\alpha} +\lambda_\alpha^m \overline{Q_\alpha})$. Recall that we already proved the opposite inclusion (before reducing to the $\alpha$-isotypical component), in particular, we already proved that
$$
\operatorname{im}(\overline{Q P^\natural}_\alpha) + \operatorname{im}(\overline{S_\alpha} +\lambda_\alpha^m \overline{Q_\alpha}) \subset \operatorname{dom}(\overline{P_\alpha}).
$$
Let  then $x\in \operatorname{dom}(\overline{P}_\alpha)$ and $x_n \in \operatorname{dom}(P_\alpha)=p_\alpha C^\infty_c(\maG, r^*E)$ such that $x_n \to x$ and $P_\alpha x_n \to \overline{P}_\alpha x$. We can then write
$$x_n=(S_\alpha +\lambda_\alpha^m Q_\alpha)x_n + (Q P^\natural)_\alpha P_\alpha x_n.$$
Since $(\overline{S} +\lambda_\alpha^m \overline{Q})$ and $\overline{Q P^\natural}$ are adjointable, we obtain that  $(\overline{S} +\lambda_\alpha^m \overline{Q})_\alpha$ and $\overline{Q P^\natural}_\alpha$ are adjointable. It follows by continuity that
$$
x=(\overline{S} +\lambda_\alpha^m \overline{Q})_\alpha x + \overline{(Q P^\natural)}_\alpha\overline{P}_\alpha x.
$$
Therefore  $\operatorname{dom}(\overline{P}_\alpha) \subset \operatorname{im}(\overline{Q P^\natural}_\alpha) + \operatorname{im}(\overline{S}_\alpha +\lambda_\alpha^m \overline{Q}_\alpha) $. We thus obtained the equality 
\begin{equation}\label{DomainEquality}
\operatorname{dom}(\overline{P}_\alpha)=\operatorname{im}(\overline{Q P^\natural}_\alpha) + \operatorname{im}(\overline{S}_\alpha +\lambda_\alpha^m \overline{Q}_\alpha).
\end{equation}
We similarly have by the same method the inclusion $
\operatorname{im}(\overline{\tilde{Q}^\natural P}) + \operatorname{im}(\overline{\tilde{R}^\natural} +\lambda_\alpha^m \overline{\tilde{Q}^\natural}) \subset \operatorname{dom}(\overline{P^\natural}).$
From the previous assertions, we can now deduce that $P_\alpha^* \subset \overline{P^\natural}_\alpha$. 
Recall that $(P^\natural\tilde{Q})_\alpha^*  P_\alpha^* \subset (P_\alpha (P^\natural\tilde{Q})_\alpha)^*$ and let $x \in \operatorname{dom}(P_\alpha^*)$ then 
$$
x= (P_\alpha (P^\natural\tilde{Q})_\alpha)^* x + (\lambda_\alpha^m \tilde{Q} + \tilde{R})_\alpha^* x = (P^\natural\tilde{Q})_\alpha^*  P_\alpha^* x + (\lambda_\alpha^m \tilde{Q}^* + \tilde{R}^*)_\alpha x.
$$
It follows that $x \in \operatorname{im}( (P^\natural\tilde{Q})_\alpha^*)  + \operatorname{im}( (\lambda_\alpha^m \tilde{Q}^* + \tilde{R}^*)_\alpha) $. Since $P^\natural \tilde{Q}$ and $\lambda_\alpha^m \tilde{Q} + \tilde{R}$ are negative order operators, we have $\overline{(P^\natural \tilde{Q})^\natural} = (P^\natural \tilde{Q})^*$ and $\overline{(\lambda_\alpha^m \tilde{Q} + \tilde{R})^\natural} =\lambda_\alpha^m \overline{\tilde{Q}^\natural} + \overline{\tilde{R}^\natural} =(\lambda_\alpha^m \tilde{Q} + \tilde{R})^*=\lambda_\alpha^m \tilde{Q}^* + \tilde{R}^*$. This gives that
$$x \in \operatorname{im}(\overline{(P^\natural\tilde{Q})^\natural}_\alpha)  + \operatorname{im}( \overline{(\lambda_\alpha^m \tilde{Q}^\natural + \tilde{R}^\natural)}_\alpha)= \operatorname{im}(\overline{\tilde{Q}^\natural P}_\alpha)  + \operatorname{im}( \overline{(\lambda_\alpha^m \tilde{Q}^\natural + \tilde{R}^\natural)}_\alpha) .$$
But we proved that $
\operatorname{im}(\overline{\tilde{Q}^\natural P}) + \operatorname{im}(\overline{\tilde{R}^\natural} +\lambda_\alpha^m \overline{\tilde{Q}^\natural}) \subset \operatorname{dom}(\overline{P^\natural}),$ hence $x\in \operatorname{dom}(\overline{P^\natural}_\alpha)$ as allowed.

It remains to prove that  $\overline{P_\alpha}$ is regular. The equality \eqref{DomainEquality} shows that the  graph of  $ \overline{P_\alpha}$ is  given by 
$$
G(\overline{P_\alpha})=\{(\overline{Q_\alpha P^\natural}_\alpha x+(\overline{S}_\alpha +\lambda_\alpha^m \overline{Q}_\alpha) y,\overline{ P_\alpha  Q_\alpha P^\natural_\alpha}x+\overline{P_\alpha (S_\alpha +\lambda_\alpha^m Q_\alpha) }y),\ (x,y)\in \mathcal{E}'_\alpha\times \mathcal{E}_\alpha\}.
$$
Hence this graph coincides with the range of the adjointable operator from $\maE'\oplus \maE$ to $\maE\oplus \maE'$ given by 
$$
U_\alpha = \begin{pmatrix}  \overline{Q P^\natural}_\alpha &  \overline{S}_\alpha +\lambda_\alpha^m \overline{Q}_\alpha\\
&\\
\overline{P_\alpha Q_\alpha P^\natural_\alpha} & \overline{P_\alpha(S_\alpha +\lambda_\alpha^m Q_\alpha)} 
\end{pmatrix}.
$$
{Hence $G(\overline{P_\alpha})=\mathrm{im}(U_\alpha)$ is orthocomplemented  in $\maE\oplus \maE'$ as the range of an adjointable operator (with closed range).}
It follows that $\overline{P}$ is regular by Lemma  \ref{StandardLemma}.
 Since $P^\natural$ is also $G$-transversally elliptic, $\overline{P^\natural}$ is also regular. 
{Now we have seen above  that $\overline{P^\natural}_\alpha= P_\alpha^*$ for any $\alpha\in \hat{G}$, and this implies that $\overline{P^\natural}=P^*$.
}

\end{proof}

In the sequel we will denote simply by $P$ the regular operator obtained from a formally selfadjoint $G$-invariant leafwise  $G$-transversally elliptic operator.

\begin{defi}[Unbounded Kasparov module $\cite{baaj1983theorie}$]
Let $A$ and $B$ be $C^*$-algebras. An $(A,B)$-unbounded Kasparov cycle $(E,\phi,D)$ is a triple where $E$ is a Hilbert $B$-module, $\phi : A \rightarrow \mathcal{L}(E)$ is a graded $\star$-homomorphism and $(D, \rm{dom} (D))$ is an unbounded regular seladjoint operator such that:
\begin{enumerate}
\item $(1+D^2)^{-1}\phi(a)\in k(E)$, $\forall a \in A$,
\item  The subspace of $A$ composed of the elements $a\in A$ such that $\phi (a) (\rm{dom}(D))\subset \rm{dom}(D)$ and $[D,\phi(a)]=D\phi (a) - \phi(a) D$ is densely defined and extends to an adjointable operator on $E$,  is dense in $A$.
\end{enumerate}
When $E$ is $\Z_2$-graded with $D$ odd and $\pi (a)$ even for any $a$, we say that the Kasparov cycle is even. Otherwise, it is odd. 
\end{defi}

In \cite{baaj1983theorie}, appropriate equivalence relations are introduced on such (even/odd) unbounded Kasparov cycles, which allowed to recover the groups $\k\k^* (A, B)$. When the compact group $G$ acts on all the above  data, one recovers similarly $\k\k^*_{\mathrm{G}}(A, B)$ by using the equivariant version of the  Baaj-Julg unbounded cycles of the previous definition. 
We are now in position to state the second main  result of this appendix. 
\medskip

\begin{thm}
Let $P_0:C^{\infty}_c(\mathcal{G},r^*E^+)\rightarrow C^{\infty}_c(\mathcal{G},r^*E^-)$ be a $G$-invariant leafwise $G$-transversally elliptic pseudodifferential operators of order $1$, and let $P$ be the associated regular self-adjoint operator defined by $\begin{pmatrix}
0&P_0^*\\P_0&0
\end{pmatrix}$. Then the triple $(\mathcal{E}, \pi ,P)$ is an even $(C^*G,C^*(M,\mathcal{F}))$-unbounded Kasparov cycle, which defines a class in  {$\k\k(C^*G,C^*(M,\mathcal{F}))$}. The similar statement holds in the ungraded case giving a class in  {$\k\k^1(C^*G,C^*(M,\mathcal{F}))$}.
\end{thm}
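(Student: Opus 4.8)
The plan is to verify the three defining conditions of an (unbounded) $G$-equivariant Kasparov cycle for the triple $(\maE,\pi,P)$, reusing as much as possible the infrastructure already set up for the bounded (order $0$) case in Section \ref{Section.Index}. First I would record that by Theorem \ref{thm:regular} the operator $P$ obtained from a formally selfadjoint $G$-invariant leafwise $G$-transversally elliptic operator of order $1$ is a regular selfadjoint operator on $\maE$, which is moreover odd for the $\Z_2$-grading $\maE=\maE^+\oplus\maE^-$ by construction (the ungraded case being identical with $P=P_0$). The representation $\pi$ of $C^*G$ is the one from Proposition \ref{prop:pi*rep}, it is even, and it is $G$-equivariant for the conjugation action on $C^*G$ and the trivial action on $C^*(M,\maF)$, exactly as in Theorem \ref{index-cycle}.

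Next I would check condition (2), the commutator condition: for $\varphi\in C^\infty(G)\subset C^*G$ the operator $\pi(\varphi)$ should preserve $\Dom(P)$ and $[P,\pi(\varphi)]$ should extend to an adjointable operator. But Lemma \ref{lem:Ppi=piP} already shows $[\pi(\varphi),P]=0$ on $C^\infty_c(\maG,r^*E)$ for $G$-invariant $P$ (the proof there only uses $G$-invariance of $P$ and the definition of $\pi$, not the order), so in fact $\pi(\varphi)$ commutes strictly with $P$; since $C^\infty(G)$ is dense in $C^*G$, condition (2) holds with the dense subalgebra $C^\infty(G)$, and the commutator extends trivially (it is $0$). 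Then the substantive point is condition (1): $(1+P^2)^{-1}\pi(\varphi)\in\maK_{C^*(M,\maF)}(\maE)$ for all $\varphi\in C^*G$. Here I would argue exactly as in the proof of Theorem \ref{index-cycle}: by Lemma \ref{lem:regular}, modulo leafwise pseudodifferential operators of negative order the operator $P^2+\Delta_G$ is leafwise elliptic of order $2$, with $\Delta_G=d_G^*d_G$ as in Remark \ref{DeltaG}. Writing $(1+P^2)^{-1}=(1+P^2+\Delta_G)^{-1}+(1+P^2+\Delta_G)^{-1}\Delta_G(1+P^2)^{-1}$ and using $d_G\circ\pi(\varphi)=\pi(d\varphi)$ from Proposition \ref{prop:dG}, one sees that $(1+P^2+\Delta_G)^{-1}$ is a leafwise pseudodifferential operator of order $-2$ hence compact on $\maE$ (Corollary 3 of \cite{lauter2000pseudodiff}), while the second term, after moving $d_G$ onto $\pi(\varphi)$ where it becomes the bounded operator $\pi(d\varphi)$, is also compact; so $(1+P^2)^{-1}\pi(\varphi)$ is compact first for $\varphi\in C^\infty(G)$ and then, by norm density and the estimate $\|\pi(\varphi)\|\le\|\varphi\|_{L^1G}$, for all $\varphi\in C^*G$. (Alternatively one can decompose $(1+P^2)^{-1}=(1+P^2)^{-1}(cQ+\varepsilon+R_i)\cdots$ and invoke Proposition \ref{prop:inégalité:preuve:thm} as in Theorem \ref{index-cycle}, but the parametrix route is cleaner for positive order.)

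Finally I would assemble these facts: $(\maE,\pi,P)$ is an even $G$-equivariant unbounded Kasparov $(C^*G,C^*(M,\maF))$-cycle in the sense of \cite{baaj1983theorie}, hence defines a class in $\k\K(C^*G,C^*(M,\maF))$ (equivalently, its Woronowicz/bounded transform $P(1+P^2)^{-1/2}$ gives the corresponding bounded cycle), and the ungraded case gives a class in $\k\k_\mathrm{G}^1(C^*G,C^*(M,\maF))$. I would also remark that the bounded transform of this class agrees with the order-$0$ index class of Theorem \ref{index-cycle} attached to any order-$0$ quantization of the (order-$0$ normalized) symbol, by the usual operator-homotopy argument, so the two constructions are compatible. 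The only real obstacle is the compactness condition (1) for a positive-order operator, which is why the preliminary Lemmas \ref{lem:20:vassout}, \ref{lem:regular} and Theorem \ref{thm:regular} about regularity and parametrices were needed; once regularity of $P$ and the leafwise ellipticity of $P^2+\Delta_G$ are in hand, everything else is a direct transcription of the bounded case.
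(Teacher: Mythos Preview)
Your proposal is correct and follows essentially the same route as the paper: verify $G$-equivariance of $\pi$ and strict commutation $[\pi(\varphi),P]=0$ from Lemma \ref{lem:Ppi=piP}, then establish compactness of $(1+P^2)^{-1}\pi(\varphi)$ via the resolvent identity $(1+P^2)^{-1}-(1+P^2+\Delta_G)^{-1}=(1+P^2+\Delta_G)^{-1}\Delta_G(1+P^2)^{-1}$ together with leafwise ellipticity of $P^2+\Delta_G$ and Proposition \ref{prop:dG}. The only cosmetic difference is that the paper moves the full Casimir across, writing $\Delta_G\pi(\varphi)=\pi(\widehat{\Delta}_G\varphi)$, whereas you move only $d_G$ and rely on $(1+P^2+\Delta_G)^{-1}d_G^*$ having negative order; both arguments yield compactness of the second term and are interchangeable.
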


\medskip

\begin{proof}
For any $\varphi\in C (G)$, it is easy to see that $\pi(\varphi)$ preserves the domain of $P$ and by Remark \ref{lem:Ppi=piP}, we have $[\pi(\varphi),P]=0$. 
It remains to check that $(1+P^2)^{-1}\circ\pi(\varphi) \in \mathcal{K}(\mathcal{E})$. We may take for our operator $\Delta_G$  the Casimir operator, which is a leafwise differential operator of order $2$. We already noticed that the operator $P^2+\Delta_G$ is elliptic. We hence deduce that the resolvent $(1+P^2+\Delta_G)^{-1}$ is a  compact operator in $\mathcal{E}$. 

We now show that for any $\varphi\in C^\infty (G)$, the operator 
$$
(1+P^2+\Delta_G)^{-1}\circ \pi(\varphi)-(1+P^2)^{-1}\circ \pi (\varphi ), 
$$ 
is compact, which will ensure that $(1+P^2)\circ \pi(\varphi )$ is also compact. 
Denote by $\widehat{\Delta}_G$ the Laplacian on $G$ viewed as a riemannian $G$-manifold. Using that $\Delta_G\pi(\varphi)=\pi(\widehat{\Delta}_G\phi)$ and that $[\pi(\varphi) ,P]=0$, we have: 
\begin{multline*}
(1+P^2+\Delta_G)^{-1}\circ \pi(\varphi)-(1+P^2)^{-1}\circ \pi (\varphi )=-(1+P^2+\Delta_G)^{-1}\Delta_G(1+P^2)^{-1} \circ\pi(\varphi )\\
~~~~~~=-(1+P^2+\Delta_G)^{-1}\pi(\widehat{\Delta}_G\varphi)(1+P^2)^{-1}.
\end{multline*}
Now since $\pi(\widehat{\Delta}_G\varphi)$ and $(1+P^2)^{-1}$ are adjointable operators and since $(1+P^2+\Delta_G)^{-1}$ is compact, we deduce that $(1+P^2+\Delta_G)^{-1}\circ \pi(\varphi)-(1+P^2)^{-1}\circ \pi (\varphi )$ is a compact operator on ${\mathcal E}$.
\end{proof}
%

\begin{defi}\label{def:indice:NB}
The index class ${\ind (P_0)}$ of a $G$-invariant leafwise  $G$-transversally elliptic pseudodifferential operator of positive order $m$,  is the class in {$\k\k^*(C^*G,C^*(M,\mathcal{F}))$} of the $(C^*G,C^*(M,\mathcal{F}))$-unbounded Kasparov cycle $(\mathcal{E},\pi, P)$.
\end{defi}

The relation with the bounded version is obtained by using the Woronowicz transform, see \cite{baaj1983theorie}. More precisely, if $P_0$ is a $G$-invariant leafwise  $G$-transversally elliptic pseudodifferential operator of  order $1$ for instance, then the  triple $(\mathcal{E},\pi, P(1+P^2)^{-1/2})$
is a (bounded) Kasparov cycle.

%
%
The following proposition was kindly suggested to us by the referee.  It allows to avoid using the regularity of the unbounded operator $P$ and can be exploited by the interested reader to simplify many of the statements of the present paper. 
\begin{prop}
Let $P$ be a formally selfadjoint $G$-invariant leafwise  $G$-transversally elliptic operator of order $1$. Let $\widetilde{D_G}$ be the order $1$ pseudodifferential operator $d_G^*(1+\Delta)^{-1/2}d_G$. Then
\begin{enumerate}
\item $T=\begin{pmatrix}
P& \widetilde{D_G}\\
\widetilde{D_G}& -P
\end{pmatrix}$ is elliptic and therefore its closure  is regular;
\item the triple $(\maE \oplus \maE, \pi \oplus 0 , \overline{T})$ defines a $(C^*G,C^*(M,\maF))$-unbounded Kasparov cycle. In the even case with grading $\gamma$ of $\maE$, we take the  grading  $\gamma \oplus -\gamma$.
\item The Kasparov cycle $(\maE \oplus \maE,\pi\oplus 0, \frac{T}{\sqrt{1+T^2}})$  represents the index class $\ind (P)$ in $\k\k (C^*G, C^*(M, \maF))$.
\end{enumerate} 
\end{prop}
\begin{proof}
$1.$ We have
 $$\sigma_1(T)^2=\begin{pmatrix}
\sigma_1(P)^2 + \sigma_1(\widetilde{D_G})^2 & 0\\
0&\sigma_1(P)^2 + \sigma_1(\widetilde{D_G})^2
\end{pmatrix}\ \mbox{ which is invertible on}\  S^*\maF.$$ 
Indeed, $\sigma_1(T)^2$ is a sum of non-negative endomorphisms and 
$\sigma_1(P)^2$ is invertible on $F_G$ and $\sigma_1(\widetilde{D_G})^2$ is invertible along the orbits.
It follows from Proposition \ref{prop:21:vassout} that $\overline{T}$ is regular.\\
$2.$ Since $T$ is elliptic, we get that $(\id + \overline{T}^2)^{-1}$ is compact. If $\varphi \in C^\infty(G)$ then $\widetilde{D_G} \pi(\varphi)=d_G^*(\id+\Delta)^{-1/2}\pi(d\varphi)$ extends in an adjointable operator since $d_G^*(\id + \Delta)^{-1/2}$ has $0$ order. Moreover, $(\pi(\varphi) \oplus 0) (\rm{dom} \overline{T}) \subset \rm{dom} \overline{T}$ by $G$-invariance of $P$ and $\widetilde{D_G}$, therefore $\left[\overline{T},  \pi(\varphi) \oplus 0\right]$ is adjointable, and by density this is still true over $C^*G$. The operator $T$ is of course odd for the grading since $P$ is odd and $D_G$ is even.


$3.$ Any homotopy of symbols yields an operator homotopy by the standard argument.
The cycle $(\maE ,\pi, \frac{P}{\sqrt{\id + P^2+D_G^2}})$ is equivalent to $(\maE \oplus \maE,\pi\oplus 0, \frac{T}{\sqrt{1+T^2}})$ because 
$(\maE \oplus \maE, \pi\oplus 0, \frac{T_t}{\sqrt{1+T_t^2}})$ where $T_t=\begin{pmatrix}
P&tD_G\\
tD_G & -P
\end{pmatrix}$ is a homotopy with $(\maE \oplus \maE,\pi\oplus 0, \begin{pmatrix}
P&0\\0&-P
\end{pmatrix}(1+P^2+D_G^2)^{-1/2} )$ which is a Kasparov cycle because $D_G^2\pi(\varphi)$ is adjointable. Indeed, we can assume $\varphi$ non-negative in $C^*G$. Then 
$$
D_G \pi(\varphi^*\ast \varphi)=\pi(d\varphi)^*(\id +\Delta)^{-1/2} d_Gd_G^*(\id +\Delta)^{-1/2}\pi(d\varphi), \forall \varphi \in C^\infty(G).
$$  
Furthermore, $(\maE \oplus \maE,\pi\oplus 0, \begin{pmatrix}
P&0\\0&-P
\end{pmatrix} (1+T^2)^{-1/2} )=(\maE ,\pi, \frac{P}{\sqrt{\id + P^2+D_G^2}})+(\maE ,0, -\frac{P}{\sqrt{\id + P^2+D_G^2}})$, where the last cycle is degenerate.
It eventually follows that $(\maE, \pi , \frac{P}{\sqrt{\id + P^2+D_G^2}})$ is homotopy equivalent to $(\maE,\pi, \frac{P}{\sqrt{1+P^2}})$.  
\end{proof}
}

%
%
%
%

\section{The technical proposition in the non-compact case}\label{ShubinLemma}

Let us fixe a non compact foliated manifold $(U,\maF^U)$ and denote by $\overline{\Psi^0(U,\maF^U,E)}$ the $C^*$-subalgebra of the adjointable operators $\maL_{C^*(U, \maF^U)} (\maE)$, which is generated  by the closures of the zero-th order pseudodifferential operator with symbols in $C_c^\infty(S^*\maF^U,\End(E))$, see \cite{ConnesSkandalis,NWX,vassout2006unbounded}. Here $\maE$ is the Hilbert $C^*(U, \maF^U)$-module defined before for the foliation $(U, \maF^U)$. Then
setting
\begin{multline*}
\maK_U(\maE):=\{T\in \maL_{C^*(U, \maF^U)} (\maE)\ |\ Tf\ \&\ fT\in \maK_{C^*(U, \maF^U)} (\maE) ,\ \forall f\in C_0(U)\}\text{ and }\\
\overline{\Psi^0(U,\maF^U,E)}_U:=\{T\in \maL_{C^*(U, \maF^U)} (\maE)\ |\ Tf\ \&\ fT\in \overline{\Psi^0(U,\maF^U,E)} ,\ \forall f\in C_0(U)\},
\end{multline*}
the following exact sequence holds (see  \cite[Proposition 4.6]{ConnesSkandalis}): 
\begin{equation}\label{Prop:4.6:CS}
0 \to \xymatrix{\maK_U(\maE) \ar[r]& \overline{\Psi^0(U,\maF^U,E)}_U\ar[r]& C_b(S^*\maF^U,\End(E))}\to 0.
\end{equation}

\begin{prop}\label{prop:ineq:non:compact}
{Let $(U,\maF^U)$ be a (non compact) foliated manifold. Let $A \in \overline{\Psi^0(U,\maF^U,E)}_U$ be selfadjoint. Suppose that the principal symbol $ {\sigma(A)}$ of $A$ satisfies
\begin{equation}\label{equ:prop:ineg:non:compact:symbol}
\forall \varepsilon >0, \exists c>0\text{ such that }\forall (x, \xi)\in S^*\maF^U:\; 
\| {\sigma(A)}(x,\xi)\|\leq c~ {\sigma(Q)}(x,\xi)+\varepsilon.
\end{equation}
Then $\forall \varepsilon >0$, there exist  two selfadjoint operators $R_1$ and $R_2 \in \maK_{{U}} (\maE)$ such that:
$$
-(c\, Q+\varepsilon +R_1)\leq A\leq c\, Q+\varepsilon +R_2 \text{  as self-adjoint operators on }\maE.
$$}
\end{prop}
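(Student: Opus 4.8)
The plan is to reduce the non-compact statement to its compact counterpart (Proposition \ref{prop:inégalité:preuve:thm}) together with the Connes--Skandalis exact sequence \eqref{Prop:4.6:CS} for the open foliated manifold $(U,\maF^U)$. The key observation is that the hypothesis \eqref{equ:prop:ineg:non:compact:symbol} says precisely that the principal symbol of $cQ+\varepsilon\,\id_\maE\pm A$ is a non-negative element of $C_b(S^*\maF^U,\END(E))$, once we rewrite $\sigma_Q(x,\xi)=\frac{1+q_x(\xi)}{1+\|\xi\|^2}$ and use that $\|\sigma_A(x,\xi)\|\le c\,\sigma_Q(x,\xi)+\varepsilon$ forces $-\bigl(c\,\sigma_Q(x,\xi)+\varepsilon\bigr)\,\id\le \sigma_A(x,\xi)\le \bigl(c\,\sigma_Q(x,\xi)+\varepsilon\bigr)\,\id$ as selfadjoint elements of $\End(E_x)$ for each $(x,\xi)$, hence on the cosphere bundle after the usual homogenisation. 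Note that $A$, $Q$ and $\id_\maE$ all lie in $\overline{\Psi^0(U,\maF^U,E)}_U$, so the symbol map $\sigma$ of \eqref{Prop:4.6:CS} applies.

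The steps, in order, would be: (1) Fix $\varepsilon>0$ and, using \eqref{equ:prop:ineg:non:compact:symbol}, choose $c>0$ so that $T^\pm:=cQ+\varepsilon\,\id_\maE\pm A$ has $\sigma(T^\pm)\geq 0$ in $C_b(S^*\maF^U,\END(E))$. (2) Since $T^\pm$ is selfadjoint in $\overline{\Psi^0(U,\maF^U,E)}_U$ with non-negative symbol, apply the argument of Proposition \ref{prop:inégalité:preuve:thm}: the element $\sigma(T^\pm)$, being a non-negative element of the commutative $C^*$-algebra $C_b(S^*\maF^U,\END(E))$, admits a selfadjoint square root there, so one may lift a selfadjoint $B^\pm\in\overline{\Psi^0(U,\maF^U,E)}_U$ with $\sigma(B^\pm)=\sigma(T^\pm)^{1/2}$; then $T^\pm-(B^\pm)^2$ has vanishing symbol, hence lies in $\maK_U(\maE)$ by exactness of \eqref{Prop:4.6:CS}. (3) Set $R_2:=T^+-(B^+)^2$ and $R_1:=T^--(B^-)^2$, both in $\maK_U(\maE)$; if necessary replace them by their selfadjoint parts (they are already selfadjoint since $T^\pm$ and $(B^\pm)^2$ are). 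This yields $T^\pm=(B^\pm)^2+R_{\bullet}\geq R_{\bullet}$, i.e. $cQ+\varepsilon\,\id_\maE+A\geq R_1$ rewritten as $A\geq -(cQ+\varepsilon+R_1)$... — more carefully, from $cQ+\varepsilon-A=(B^-)^2+R_1\geq R_1$ we get $A\le cQ+\varepsilon-R_1$, and renaming $-R_1\rightsquigarrow R_1$ (still in $\maK_U(\maE)$, still selfadjoint) gives $A\le cQ+\varepsilon+R_2$ with an analogous lower bound, as claimed.

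I do not expect a genuine obstacle here: the only mild subtlety is that $B^\pm$ need not be a classical pseudodifferential operator of the kind generating $\overline{\Psi^0(U,\maF^U,E)}$, but the exact sequence \eqref{Prop:4.6:CS} is an honest $C^*$-algebra extension, so functional calculus inside $\overline{\Psi^0(U,\maF^U,E)}_U$ — or, equivalently, a set-theoretic lift of $\sigma(T^\pm)^{1/2}$ followed by correcting the symbol — suffices, and we never need $B^\pm$ to be smoothing, only the \emph{difference} $T^\pm-(B^\pm)^2$ to lie in $\maK_U(\maE)$, which is exactly what exactness provides. The compactness of $S^*\maF^U$ as a bundle over the non-compact $U$ plays no role because $C_b$ (bounded, not $C_0$) is the correct quotient algebra, and all symbol inequalities are uniform in the fibre by hypothesis. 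Thus the argument is a verbatim transcription of the compact case with $C(S^*\maF,\END(E))$ replaced by $C_b(S^*\maF^U,\END(E))$ and the ordinary Connes sequence replaced by \eqref{Prop:4.6:CS}.
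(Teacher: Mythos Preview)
Your proposal is correct and follows essentially the same route as the paper: both arguments reduce to the observation that the selfadjoint operators $T^\pm=cQ+\varepsilon\pm A$ have non-negative principal symbol in $C_b(S^*\maF^U,\END(E))$, and then invoke the exact sequence \eqref{Prop:4.6:CS} to conclude that $T^\pm$ is non-negative modulo $\maK_U(\maE)$. The paper is simply terser at this last step --- it asserts directly that non-negativity of $\sigma(T^\pm)$ yields $T^\pm+R\geq 0$ for some selfadjoint $R\in\maK_U(\maE)$, without spelling out your square-root lift (the clean justification being that in the quotient $C^*$-algebra $\overline{\Psi^0(U,\maF^U,E)}_U/\maK_U(\maE)$ the class of $T^\pm$ is non-negative, so the negative part $(T^\pm)_-$ lies in $\maK_U(\maE)$ and $T^\pm+(T^\pm)_-=(T^\pm)_+\geq 0$).
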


\begin{proof}
{By  \eqref{equ:prop:ineg:non:compact:symbol}, we deduce that 
$0\leq  {\sigma(A)}+ c {\sigma(Q)}+ \varepsilon$  in the $C^*$-algebra $C_b(S^*\maF^U,\End(E))$}. Now using the exact sequence \eqref{Prop:4.6:CS}, we get that there is {an operator} $R_1 \in {\maK_U(\maE)}$ such that 
$0 \leq A+cQ +\varepsilon +R_1$, in other words we get 
$$ -(R_1+cQ +\varepsilon) \leq A.$$
Now replacing $\sigma(A)$ by $-\sigma(A)$ we also get the existence of an $R_2\in\maK_{{U}}(\maE)$ such that 
$0 \leq -A +cQ +\varepsilon +R_2$, in other words we get 
$$  A\leq (R_2+cQ +\varepsilon) .$$
Gathering {these inequalities  we hence obtain}
\begin{equation}
-(c Q+\varepsilon +R_1)\leq A\leq c Q+\varepsilon +R_2 \text{  as self-adjoint operators on }\maE
\end{equation}
with each $R_i \in \maK_U(\maE)$ as allowed. 
\end{proof}

\footnotesize
\bibliographystyle{plain}
\bibliography{Transversalement_elliptique}

\begin{thebibliography}{10}

\bibitem{atiyah1974elliptic}
M.F. Atiyah.
\newblock {\em Elliptic operators and compact groups}.
\newblock Lecture notes in mathematics. Springer Verlag, 1974.

\bibitem{AtiyahBottMoment}
M.F. Atiyah and R.~Bott.
\newblock The moment map and equivariant cohomology.
\newblock {\em Topology}, 23(1):1 -- 28, 1984.

\bibitem{Atiyah-Singer:I}
M.F. Atiyah and I.M. Singer.
\newblock The index of elliptic operators {I}.
\newblock {\em Ann. Math.}, 87:484--530, 1968.

\bibitem{Atiyah-Singer:IV}
M.F. Atiyah and I.M. Singer.
\newblock The index of elliptic operators {IV}.
\newblock {\em Ann. Math.}, 93:119--138, 1971.

\bibitem{baaj1983theorie}
S.~Baaj and P.~Julg.
\newblock Th{\'e}orie bivariante de {K}asparov et op{\'e}rateurs non born{\'e}s
  dans les {C}*-modules hilbertiens.
\newblock {\em CR Acad. Sci. Paris S{\'e}r. I Math}, 296(21):875--878, 1983.

\bibitem{baldare:KK}
A.~Baldare.
\newblock The index of {G}-transversally elliptic families {I}.
\newblock {\em To appear in J. noncomm. Geometry. arXiv:1803.11148}, 2018.

\bibitem{baldare:H}
A.~Baldare.
\newblock The index of {G}-transversally elliptic families {II}.
\newblock {\em To appear in J. noncomm. Geometry. arXiv:1809.04819v2}, 2018.

\bibitem{benameur1993theoreme}
M.-T. Benameur.
\newblock Un th{\'e}oreme de lefschetz longitudinal en k-th{\'e}orie
  {\'e}quivariante.
\newblock {\em Comptes rendus de l'Acad{\'e}mie des sciences. S{\'e}rie 1,
  Math{\'e}matique}, 317(5):491--494, 1993.

\bibitem{benameur1994theoreme}
M.-T. Benameur.
\newblock {\em Sur un th{\'e}or{\`e}me de Lefschetz feuillet{\'e} en
  K-th{\'e}orie localis{\'e}e}.
\newblock PhD thesis, Lyon 1, 1994.

\bibitem{Benameur:LongLefschetzKtheorie}
M.-T. Benameur.
\newblock A longitudinal {L}efschetz theorem in {K}-theory.
\newblock {\em K-theory}, 12:227--257, 1997.

\bibitem{BenameurPacific}
M.-T. Benameur.
\newblock Triangulations and the stability theorem for foliations.
\newblock {\em Pacific J. Math.}, 179(2):221--239, 1997.

\bibitem{BenameurFack}
M.-T. Benameur and T.~Fack.
\newblock Type {II} non-commutative geometry. {I}. {D}ixmier trace in von
  neumann algebras.
\newblock {\em Adv. Math.}, 199(1):29--87, 2006.

\bibitem{BenameurHeitschJDG}
M.-T. Benameur and J.~L. Heitsch.
\newblock The twisted higher harmonic signature for foliations.
\newblock {\em J. {D}ifferential {G}eom.}, 87(3):389--468, 03 2011.

\bibitem{BenameurHeitschTransverseNCG}
M.-T. Benameur and J.~L. Heitsch.
\newblock Transverse noncommutative geometry of foliations.
\newblock {\em Journal of Geometry and Physics}, 134:161--194, 2018.

\bibitem{BenameurHeitschI}
M.-T. Benameur and J.L. Heitsch.
\newblock Index theory and non-commutative geometry {I}. {H}igher families
  index theory.
\newblock {\em K-theory}, 33(2):151--183, 2004.

\bibitem{BenameurHeitschlefschetz}
M.-T. Benameur and J.L. Heitsch.
\newblock The higher fixed point theorem for foliations {I}. {H}olonomy
  invariant currents.
\newblock {\em Journal of Functional Analysis}, 259(1):131 -- 173, 2010.

\bibitem{BenameurHeitschICorrigendum}
M.-T. Benameur and J.L.
\newblock Index theory and non-commutative geometry {I}. {H}igher families
  index theory--{C}orrigendum.
\newblock {\em K-Theory}, 36(3-4), 2005.

\bibitem{BV:formuleloc:Kirillov}
N.~Berline and M.~Vergne.
\newblock The equivariant index and {K}irillov's character formula.
\newblock {\em American Journal of Mathematics}, 107(5):1159--1190, 1985.

\bibitem{BV:ChernCharactertransversally}
N.~Berline and M.~Vergne.
\newblock The {C}hern character of a transversally elliptic symbol and the
  equivariant index.
\newblock {\em Inventiones mathematicae}, 124(1):11--49, 1996.

\bibitem{BV:IndEquiTransversal}
N.~Berline and M.~Vergne.
\newblock L'indice {\'e}quivariant des op{\'e}rateurs transversalement
  elliptiques.
\newblock {\em Inventiones mathematicae}, 124(1):51--101, 1996.

\bibitem{BrueningKamberRichardson}
J.~Br{\"u}ning, F.~W. Kamber, and K.~Richardson.
\newblock The equivariant index theorem for transversally elliptic operators
  and the basic index theorem for riemannian foliations.
\newblock {\em arXiv preprint arXiv:1008.2016}, 2010.

\bibitem{CantwellConlon}
J.~Cantwell and L.~Conlon.
\newblock Leaf prescriptions for closed 3-manifolds.
\newblock {\em Trans. Amer. Math. Soc.}, 236:239–261, 1978.

\bibitem{Connes:integration:non:commutative}
A.~Connes.
\newblock Sur la th{\'e}orie non commutative de l’int{\'e}gration.
\newblock In {\em Alg{\`e}bres d’op{\'e}rateurs}, Lecture notes in
  mathematics, pages 19--143. Springer, 1979.

\bibitem{Connes:surveyFoliation}
A.~Connes.
\newblock A survey of foliations and operator algebras, {Operator algebras and
  applications{,} {P}art {I} ({K}ingston, {O}nt.,1980)}.
\newblock {\em Proc. Symp. Pure Math}, 38:521--628, 1981.

\bibitem{ConnesMoscovici}
A.~Connes and H.~Moscovici.
\newblock The local index formula in noncommutative geometry.
\newblock In {\em Geometries in Interaction}, pages 174--243. Springer, 1995.

\bibitem{ConnesSkandalis}
A.~Connes and G.~Skandalis.
\newblock The longitudinal index theorem for foliations.
\newblock {\em Publications RIMS Kyoto Univ}, 20:135--179, 1984.

\bibitem{DuistermaatHeckman}
J.J. Duistermaat and G.J. Heckman.
\newblock On the variation in the cohomology of the symplectic form of the
  reduced phase space.
\newblock {\em Inventiones mathematicae}, 69(2):259--268, 1982.

\bibitem{fox1994index}
J.~Fox and P.~Haskell.
\newblock The index of transversally elliptic operators for locally free
  actions.
\newblock {\em Pacific Journal of Mathematics}, 164(1):41--85, 1994.

\bibitem{GorokhovskyLott}
A.~Gorokhovsky and J.~Lott.
\newblock Local index theory over foliation groupoids.
\newblock {\em Advances in Mathematics}, 204(2):413--447, 2006.

\bibitem{HanMathai}
F.~Han and V.~Mathai.
\newblock Projective elliptic genera and elliptic pseudodifferential genera.
\newblock {\em Advances in Mathematics}, 358:106860, 2019.

\bibitem{hilsum2010bordism}
M.~Hilsum.
\newblock Bordism invariance in {KK}-theory.
\newblock {\em Mathematica Scandinavica}, 107(1):73--89, 2010.

\bibitem{HilsumSkandalis1983Stability}
M.~Hilsum and G.~Skandalis.
\newblock Stabilit\'{e} des {$C^{\ast} $}-alg\`ebres de feuilletages.
\newblock {\em Ann. Inst. Fourier (Grenoble)}, 33(3):201--208, 1983.

\bibitem{hilsum1987morphismes}
Michel Hilsum and Georges Skandalis.
\newblock Morphismes {$K$}-orient\'es d'espaces de feuilles et fonctorialit\'e
  en th\'eorie de {K}asparov (d'apr\`es une conjecture d'a. connes).
\newblock {\em {A}nnales scientifiques de l'{\'E}cole {N}ormale
  {S}up\'erieure}, 4e s{\'e}rie, 20(3):325--390, 1987.

\bibitem{HochsMathai}
P.~Hochs and V.~Mathai.
\newblock Quantising proper actions on {S}pin$^{c}$-manifolds.
\newblock {\em arXiv preprint arXiv:1408.0085}, 2014.

\bibitem{Hormander1971}
L.~H{\"o}rmander.
\newblock Fourier integral operators. {I}.
\newblock {\em Acta Mathematica}, 127(1):79--183, 1971.

\bibitem{julg1981produit:croise}
P.~Julg.
\newblock K-th{\'e}orie {\'e}quivariante et produits crois{\'e}s.
\newblock {\em CR Acad. Sci. Paris S{\'e}r. I Math}, 292:629--632, 1981.

\bibitem{julg1982induction}
P.~Julg.
\newblock Induction holomorphe pour le produit crois{\'e} d’une {C}*-algebre
  par un groupe de {L}ie compact.
\newblock {\em CR Acad. Sci. Paris S{\'e}r. I Math}, 294(5):193--196, 1982.

\bibitem{julg1988indice}
P~Julg.
\newblock Indice relatif et {K}-th{\'e}orie bivariante de {K}asparov.
\newblock {\em CR Acad. Sci. Paris}, 307:243--248, 1988.

\bibitem{Kacimi}
El~Kacimi-Alaoui.
\newblock Op{\'e}rateurs transversalement elliptiques sur un feuilletage
  riemannien et applications.
\newblock {\em Compositio Mathematica}, 73(1):57--106, 1990.

\bibitem{Kasparov1988}
G.~Kasparov.
\newblock Equivariant {KK}-theory and the {N}ovikov conjecture.
\newblock {\em Inventiones mathematicae}, 91(1):147--202, 1988.

\bibitem{Kasparov:KKindex}
G.~Kasparov.
\newblock Elliptic and transversally elliptic index theory from the viewpoint
  of {KK}-theory.
\newblock {\em Journal of Noncommutative Geometry}, 10(4):1303--1378, 2016.

\bibitem{kawasaki1981index}
T.~Kawasaki.
\newblock The index of elliptic operators over {V}-manifolds.
\newblock {\em Nagoya Mathematical Journal}, 84:135--157, 1981.

\bibitem{kordyukov1994transv}
Y.~A. Kordyukov.
\newblock Transversally elliptic operators on a {G}-manifold of bounded
  geometry.
\newblock {\em Russian J. Math. Phys}, 2:175–198, 199.

\bibitem{kordyukov1995transv}
Y.~A. Kordyukov.
\newblock Transversally elliptic operators on a {G}-manifold of bounded
  geometry.
\newblock {\em Russian J. Math. Phys}, 3:41--64, 1995.

\bibitem{lance1995hilbert}
E.C. Lance.
\newblock {\em Hilbert {C}*-{M}odules: {A} {T}oolkit for {O}perator
  {A}lgebraists}.
\newblock Lecture note series / London mathematical society. Cambridge
  University Press, 1995.

\bibitem{lauter2000pseudodiff}
R.~Lauter, B.~Monthubert, and V.~Nistor.
\newblock Pseudodifferential analysis on continuous family groupoids.
\newblock {\em Doc. Math}, 5:625--655, 2000.

\bibitem{MMS1}
V.~Mathai, R.B. Melrose, and I.M. Singer.
\newblock Fractional analytic index.
\newblock {\em Journal of Differential Geometry}, 74(2):265--292, 2006.

\bibitem{MMS2}
V.~Mathai, R.B Melrose, and I.M. Singer.
\newblock Equivariant and fractional index of projective elliptic operators.
\newblock {\em Journal of Differential Geometry}, 78(3):465--473, 2008.

\bibitem{MeinrenkenSjamaar}
E.~Meinrenken and R.~Sjamaar.
\newblock Singular reduction and quantization.
\newblock {\em Topology}, 38(4):699--762, 1999.

\bibitem{MooreSchochet}
C.C. Moore and C.L. Schochet.
\newblock {\em Global analysis on foliated spaces}, volume~9 of {\em
  Mathematical Sciences Research Institute Publications}.
\newblock Cambridge University Press, New York, second edition, 2006.

\bibitem{NWX}
V.~Nistor, A.~Weinstein, and P.~Xu.
\newblock Pseudodifferential operators on differential groupoids.
\newblock {\em Pacific journal of mathematics}, 189(1):117--152, 1999.

\bibitem{paradan:projective}
P.-E. Paradan.
\newblock Index of projective elliptic operators.
\newblock {\em Comptes Rendus Mathematique}, 354(12):1230--1235, 2016.

\bibitem{ParadanVergne17}
P.-E. Paradan and M.~Vergne.
\newblock Equivariant {D}irac operators and differentiable geometric invariant
  theory.
\newblock {\em Acta Math.}, 218(1):137--199, 03 2017.

\bibitem{renault2006groupoid}
J.~Renault.
\newblock {\em A groupoid approach to C*-algebras}, volume 793.
\newblock Springer, 2006.

\bibitem{Shubin92}
M.A. Shubin.
\newblock Spectral theory of elliptic operators on noncompact manifolds.
\newblock {\em Ast{\'e}risque}, 207(5):35--108, 1992.

\bibitem{shubin2001pseudodifferential}
M.A. Shubin.
\newblock {\em Pseudodifferential {O}perators and {S}pectral {T}heory}.
\newblock Pseudodifferential Operators and Spectral Theory. Springer Berlin
  Heidelberg, 2001.

\bibitem{skandalis:cours:M2}
G.~Skandalis.
\newblock C*-algèbres, algèbres de von neumann, exemples, cours de master 2.
\newblock 2015.

\bibitem{vassout2001feuilletages}
S.~Vassout.
\newblock {\em Feuilletages et r{\'e}sidu non commutatif longitudinal}.
\newblock PhD thesis, Paris 6, 2001.

\bibitem{vassout2006unbounded}
S.~Vassout.
\newblock Unbounded pseudodifferential calculus on lie groupoids.
\newblock {\em Journal of Functional analysis}, 236(1):161--200, 2006.

\bibitem{VergneOrbifold}
M.~Vergne.
\newblock Equivariant index formulas for orbifolds.
\newblock {\em Duke Mathematical Journal}, 82(3):637--652, 1996.

\end{thebibliography}

\end{document}